\theoremstyle{plain}
\newtheorem{thm}{Theorem}[section]
\newtheorem{lem}[thm]{Lemma}
\newtheorem{prop}[thm]{Proposition}
\theoremstyle{definition}
\theoremstyle{remark}
\newtheorem{remark}[thm]{Remark}
\newtheorem*{rem}{Remark}
\numberwithin{equation}{section}
\newcounter{tmp}
\title[Abundance of observable Lyapunov irregular sets]{Abundance of observable Lyapunov irregular sets}
\date{\today}
\author{Shin Kiriki}
\address[Shin Kiriki]{Department of Mathematics, Tokai University, 4-1-1 Kitakaname, Hiratuka, Kanagawa, 259-1292, JAPAN}
\email{kiriki@tokai-u.jp}
\author{Xiaolong Li}
\address[Xiaolong Li]{Graduate School of Mathematics and Statistics, Huazhong University of Science and Technology, Luoyu Road 1037, Wuhan, CHINA}
\email{lixl@hust.edu.cn}
\author{Yushi Nakano}
\address[Yushi Nakano]{Department of Mathematics, Tokai University, 4-1-1 Kitakaname, Hiratuka, Kanagawa, 259-1292, JAPAN}
\email{yushi.nakano@tsc.u-tokai.ac.jp}
\author{Teruhiko Soma}
\address[Teruhiko Soma]{Department of Mathematical Sciences, Tokyo Metropolitan University, 1-1 Minami-Ohsawa, Hachioji, Tokyo, 192-0397, JAPAN}
\email{tsoma@tmu.ac.jp}
\subjclass[2010]{37C29, 37C05, 37C40}
\keywords{Lyapunov exponent; Lyapunov irregular set; homoclinic tangency}
\begin{document}

\begin{abstract}
Lyapunov exponent is  widely used in natural science  to find   chaotic signal, 
but its existence  is seldom discussed. 
In the present paper, 
we consider the problem of whether 
 the set of points at which Lyapunov exponent fails to exist, called the  Lyapunov irregular set, has positive Lebesgue measure. 
The only known example with the Lyapunov irregular set of positive Lebesgue measure is  a figure-8 attractor
by the work of Ott and Yorke \cite{OY2008}, whose key mechanism (homoclinic loop) is easy to be broken by small perturbations.
In this paper, we show that surface diffeomorphisms with a robust homoclinic tangency given by Colli and Vargas   \cite{CV2001}, as well as other  several known  nonhyperbolic dynamics, 
 have the Lyapunov irregular set  of positive Lebesgue measure. 
We can construct such positive Lebesgue measure sets both as the time averages exist 
and   do not exist on it. 
\end{abstract}

\maketitle

\section{Introduction}\label{section:introduction}
 Lyapunov exponent is  a 
  quantity to measure   sensitivity of  an orbit  to initial conditions  
   and natural scientists often compute  it  to find  chaotic signal. 
However, the existence of Lyapunov exponent is seldom discussed. 
The aim of this paper is to investigate the abundance of dynamical systems whose Lyapunov exponents  fail to exist 
 on a physically observable set, that is, a \emph{positive Lebesgue measure} set.

Let $M$ be a compact Riemannian manifold and $f: M\to M$ a differential map.
A point $x\in M$
  is said to be \emph{Lyapunov irregular} if  there is a 
  non-zero
   vector $v\in T_xM$ such that the Lyapunov exponent of $x$ for $v$, 
\begin{equation}\label{eq:0730a}
\lim _{n\to \infty} \frac{1}{n} \log \Vert Df^n(x) v\Vert ,
\end{equation}
does not exist. 
When we would like to emphasize the dependence on $v$, we call it Lyapunov irregular for $v$.
Similarly  a point $x$ is said to be  \emph{Birkhoff irregular}   if  there is a 
continuous function $\varphi : M\to \mathbb R$ such that the time average $\lim _{n\to \infty} ( \sum _{j=0}^{n-1} \varphi (f^j (x)) )/n$ does not exist.
Otherwise, we say that $x$ is \emph{Birkhoff regular}. 
Moreover, we call the set of Lyapunov (resp.~Birkhoff) irregular points the   \emph{Lyapunov} (resp.~\emph{Birkhoff}) \emph{irregular set} of $f$.  
We borrowed these terminologies from  Abdenur-Bonatti-Crovisier \cite{ABC2011}, while they   studied the \emph{residuality} of Lyapunov/Birkhoff irregular sets, which is not the scope of the present paper.
Indeed,  the  residuality of   irregular sets is a generic property (\cite[Theorem 3.15]{ABC2011}) while the positivity of Lebesgue measure of  irregular sets does not hold for Axiom A diffeomorphisms, see e.g.~\cite{Young2002}. 
The terminology \emph{historic behavior} by Ruelle \cite{Ruelle2001} is also commonly used for the forward orbit of a point to mean that the point is Birkhoff irregular, 
in particular in the study of  the positivity of Lebesgue measure of Birkhoff irregular sets after Takens \cite{Takens2008}, 
 see e.g.~\cite{KS2017,KNS2019} and references therein.

Due to the  Oseledets multiplicative  ergodic theorem,  the Lyapunov irregular set of $f$
 is a  zero measure set 
 for any \emph{invariant} measure. 
However,  this tells nothing about whether the Lyapunov irregular set is of positive Lebesgue measure in general.
In fact,  the Birkhoff ergodic theorem  ensures that 
the Birkhoff irregular set   has zero measure 
with respect to any invariant   measure, but for a wide variety of  dynamical systems  the Birkhoff irregular set is known to have  positive Lebesgue measure,  
 see e.g.~\cite{Ruelle2001, Takens2008, KS2017, KNS2019} and references therein. 
Furthermore, the positivity of Lebesgue measure of the Birkhoff irregular set 
 for these examples are strongly related with \emph{non-hyperbolicity} of the systems, and 
   the two  complementary conjectures given by Palis \cite{Palis2000} 
   and Takens \cite{Takens2008} for the abundance of dynamics with the Birkhoff irregular set of   positive  Lebesgue measure 
     opened a deep research field in smooth dynamical systems theory.   
So, it is naturally  expected that 
 finding a large class of  dynamical systems with the Lyapunov irregular set 
 of positive Lebesgue measure
  would be a significant subject. 

Yet, the  known   example whose  Lyapunov irregular set has  positive  Lebesgue measure   is only a surface flow with an attracting homoclinic loop, 
called a figure-8 attractor (\cite{OY2008}),
see Section \ref{s:12} for details. 
However, 
the homoclinic loop is easy to be broken by small perturbations. 
Therefore, in this paper we give surface diffeomorphisms with a $\mathcal C^r$-\emph{robust homoclinic tangency} ($r\geq 2$) and the Lyapunov irregular set of positive Lebesgue measure. 
Recall that Newhouse  \cite{Newhouse79}  showed  that, when $M$ is a closed surface, any homoclinic tangency     yields a $\mathcal C^r$-diffeomorphism $f$ with a robust homoclinic tangency associated with a thick basic set $\Lambda$, 
that is, there is a  neighborhood $\mathcal O$ of $f$ in the set $\mathrm{Diff}^r(M)$ of $\mathcal C^r$-diffeomorphisms  such that for every $g\in \mathcal O$ the continuation $\Lambda _g$ of $\Lambda$  has a homoclinic tangency.
Such an open set $\mathcal O$  is called a \emph{Newhouse open set}. 

We finally remark that if $f$ is a $\mathcal C^1$-diffeomorphism whose Lyapunov irregular set has positive Lebesgue measure and $\tilde f$ is conjugate to $f$ by a $\mathcal C^1$-diffeomorphism $h$, that is, $\tilde f= h^{-1} \circ f \circ h$, then the Lyapunov irregular set of $\tilde f$ also has positive Lebesgue measure. 
Our main theorem is the following. 
\begingroup
\setcounter{tmp}{\value{thm}}
\setcounter{thm}{0}
\renewcommand\thethm{\Alph{thm}}
\begin{thm}\label{thm:main}
There exists a diffeomorphism $g$ in a Newhouse open set
 of $\mathrm{Diff}^r(M)$ of  a closed surface $M$  and $2\leq r<\infty$ such that for any small $\mathcal C^r$-neighborhood $\mathcal O$ of $g$ one can find an uncountable set $\mathcal L\subset \mathcal O$ satisfying the following: 
 \begin{itemize}
 \item[(1)] Every  $f$ and $\tilde f$ in $\mathcal L$ are not topologically conjugate if $f\neq \tilde f$;
 \item[(2)] For any $f\in \mathcal L$, there exist open sets $U_f\subset M$ and  $V_f\subset \mathbb R^2$, under the identification of $TU_f$ with $U_f \times \mathbb R^2$, such that any point $x\in U_f$ is  Lyapunov irregular for any non-zero vector $v\in V_f$. 
 \end{itemize}
 Furthermore, $\mathcal L$ can be decomposed into two uncountable sets $\mathcal R$ and $\mathcal{I}$ such that any point in $U_f$ is Birkhoff regular for each $f\in \mathcal R$ and 
 any point in $U_f$ is
  Birkhoff irregular for each $f\in \mathcal I$. 
   \end{thm}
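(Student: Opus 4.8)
The plan is to build the diffeomorphism $g$ inside a Newhouse open set by starting from the Colli--Vargas construction, which produces a surface diffeomorphism with a $\mathcal C^r$-robust homoclinic tangency associated to a thick basic set $\Lambda$, together with a distinguished point $p$ of tangency between $W^s(\Lambda)$ and $W^u(\Lambda)$. Near such a tangency one has a ``quadratic-like'' return structure on a thin rectangle $U$, and the key idea is that orbits starting in $U$ alternate between long stretches of time spent near the (contracting/expanding) periodic saddles of $\Lambda$ and short transitional passages near the tangency. By carefully prescribing, for a rapidly increasing sequence of times $n_1 \ll n_2 \ll \cdots$, which saddle an orbit shadows over the block $[n_k, n_{k+1}]$, one controls the partial averages $\frac1n \log \|Df^n(x)v\|$: choosing the periodic orbits to have different Lyapunov exponents (say $\lambda_1$ on the ``contracting'' side and $\lambda_2 < \lambda_1$ elsewhere, both obtainable robustly because $\Lambda$ is a nontrivial hyperbolic set), the sequence of Birkhoff-type averages along the subsequences $n_k$ oscillates between two distinct limit values. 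This forces the limit in \eqref{eq:0730a} not to exist for every $x\in U_f$ and every nonzero $v$ in a suitable cone $V_f\subset\mathbb R^2$ (the cone being forced by uniform hyperbolicity of $\Lambda$, so that the direction of $Df^n(x)v$ converges and only the rate oscillates). This whole mechanism is open in $x$ — it holds on the full rectangle $U_f$ — and, crucially, it can be made to persist under $\mathcal C^r$-small perturbations because the robustness of the tangency and the hyperbolicity of $\Lambda$ are themselves robust.

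The second step is to produce the uncountable family $\mathcal L$. Here I would use the standard ``Newhouse-phenomenon parametrized family'' trick: inside $\mathcal O$ one has a continuum of parameters, and by iterating the Colli--Vargas--Newhouse construction along a Cantor set of scales one encodes a binary sequence into the combinatorial choices (e.g.\ which of two periodic saddles the itinerary visits, or the precise lengths of the blocks $n_k$). Different binary sequences give diffeomorphisms with different sets of periods, or different topological entropies on the relevant pieces, hence non-conjugate ones; alternatively one arranges that the number and the lengths of certain ``tangency-accumulating'' periodic orbits differ, which is a topological-conjugacy invariant. This yields an uncountable $\mathcal L$ with property~(1). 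Property~(2) then holds for every member of $\mathcal L$ because the irregular-set mechanism above was built to survive in the whole neighborhood; the open sets $U_f$ and the cone $V_f$ vary continuously with $f$.

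For the final dichotomy, the point is that the \emph{Lyapunov} irregularity was obtained from \emph{oscillation of the rate of norm growth}, which does not by itself say anything about Birkhoff averages of continuous functions. So I would split the construction into two regimes. For $\mathcal R$: design the itinerary so that, although the growth rate $\frac1n\log\|Df^n(x)v\|$ oscillates, the empirical measures $\frac1n\sum_{j<n}\delta_{f^j(x)}$ actually converge — this is possible because one can let the orbit spend an asymptotic fraction tending to a fixed value $\theta$ near saddle $1$ and $1-\theta$ near saddle $2$ (so Birkhoff averages converge to the convex combination $\theta\mu_1+(1-\theta)\mu_2$ of the two periodic measures) while still having the \emph{finite-time} logarithmic derivatives lag behind and oscillate, because the logarithm of the norm is a Birkhoff sum of the unbounded-near-tangency function $\log\|Df\|$, not of a continuous function; Birkhoff regularity for \emph{continuous} $\varphi$ is what survives. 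For $\mathcal I$: instead let the fraction of time near saddle $1$ oscillate between two values along $n_k$, which simultaneously destroys convergence of Birkhoff averages of a suitably chosen continuous $\varphi$ (one separating neighborhoods of the two saddles) and of the Lyapunov exponent. Both $\mathcal R$ and $\mathcal I$ are made uncountable by the same Cantor-set-of-scales encoding as before, and non-conjugacy within each is arranged in the same way.

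The hard part will be the simultaneous \emph{quantitative} control: one must show that the transitional passages near the tangency contribute a negligible (sublinear, or controllably bounded) amount to both $\frac1n\log\|Df^n(x)v\|$ and to Birkhoff sums, uniformly over all $x$ in the rectangle $U_f$ and all admissible itineraries, so that the block structure genuinely dictates the asymptotics; this requires careful distortion estimates near the quadratic tangency (à la Colli--Vargas) and a uniform lower bound on the ``dwell times'' $n_{k+1}-n_k$ growing fast enough to dominate the transition costs. Equally delicate is ensuring that the \emph{whole} rectangle $U_f$ — not just a Cantor subset of it — follows the prescribed itinerary; this is where the thickness of $\Lambda$ and the robust-tangency geometry are essential, since one needs the images of $U_f$ under returns to sweep across $\Lambda$'s stable lamination in a controlled way. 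I would expect the conjugacy-invariant distinguishing the members of $\mathcal L$ (step two) to be the lightest part, and the uniform distortion/dwell-time bookkeeping to be where the real work lies.
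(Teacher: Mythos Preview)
Your proposal has a genuine gap in the mechanism producing Lyapunov irregularity, and this gap becomes fatal precisely in the $\mathcal R$-case.

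You propose to make $\tfrac{1}{n}\log\|Df^n(x)v\|$ oscillate by having the orbit shadow, on successive blocks, periodic saddles of $\Lambda$ with \emph{different} Lyapunov exponents. But in the Colli--Vargas model the horseshoe is affine, so every periodic point of $\Lambda$ has the same exponents $\log\lambda$ and $\log\sigma$; there are no two saddles to choose from. More seriously, your argument for $\mathcal R$ is internally inconsistent: you claim the direction of $Df^n(x)v$ converges (by hyperbolicity of $\Lambda$), and that Birkhoff averages of continuous $\varphi$ converge, yet $\tfrac{1}{n}\log\|Df^n(x)v\|$ does not. Your stated reason is that $\log\|Df\|$ is ``unbounded near the tangency'', but $f$ is a $\mathcal C^r$-diffeomorphism of a closed surface, so $\log\|Df\|$ is continuous and bounded. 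If the direction of $Df^n(x)v$ really stabilised in $E^u$, then $\tfrac{1}{n}\log\|Df^n(x)v\|$ would be asymptotic to the Birkhoff average of the continuous function $\log\|Df|_{E^u}\|$ and would converge along with all the others. So with your mechanism Birkhoff regularity forces Lyapunov regularity, and $\mathcal R$ collapses.

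The paper's mechanism is entirely different and avoids this trap. The derivative of the return map $f^{n_k+2}$ on the rectangle $R_k$ is, up to a controllable error $-2\sigma^{2n_k}x$, the \emph{anti-diagonal} matrix $\bigl(\begin{smallmatrix}0&-\lambda^{n_k}\\ \sigma^{n_k}&0\end{smallmatrix}\bigr)$. Each passage through the tangency therefore \emph{swaps} the horizontal and vertical components of $v$ rather than aligning them; the direction of $Df^n(x)v$ does not settle down. The oscillation is then manufactured not by varying which saddle is shadowed but by varying the \emph{ratios} of successive return times: one chooses $(n_k)$ so that $n_{2p+1}/n_{2p}\to\alpha$ and $n_{2p}/n_{2p-1}\to\beta$ with $1<\alpha<\beta$, which drives the partial exponents to $\frac{\log\lambda+\alpha\log\sigma}{1+\alpha}$ and $\frac{\log\lambda+\beta\log\sigma}{1+\beta}$ along the two subsequences. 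The genuinely hard step --- the one you identified as ``distortion bookkeeping'' but did not locate correctly --- is to carve out a subrectangle on which $|2\sigma^{2n_k}x|\le\xi\lambda^{n_k}$ for all large $k$ (so the anti-diagonal approximation is valid) and which is mapped into itself by the successive returns. With this mechanism the orbit can shadow a \emph{single} fixed point $\boldsymbol p_+$ throughout (giving Birkhoff regularity, since $\tfrac1n\sum\delta_{f^j(x)}\to\delta_{\boldsymbol p_+}$), while the swapping-plus-ratio effect still makes the Lyapunov exponent oscillate. The split $\mathcal R$ versus $\mathcal I$ is then obtained purely through the symbolic freedom in the Colli--Vargas itineraries $\underline z_k^0$, not through any change to $(n_k)$; and the uncountability/non-conjugacy comes from a further free symbol in each block.
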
 
  \endgroup
  \setcounter{thm}{0}

      \begin{rem}[Generalization of Theorem \ref{thm:main}]
      It is a famous folklore result known to Bowen  that a surface flow with heteroclinically connected two dissipative saddle points  has the Birkhoff irregular set of  positive Lebesgue measure (see Subsection \ref{s:12}), and its precise proof was   given by Gaunersdorfer \cite{Gaunersdorfer1992}, see also Takens \cite{Takens1994}.
However, again, the heteroclinic connections are easily broken by small perturbations, and thus Takens asked in \cite{Takens2008} whether 
 the Birkhoff  irregular set can have  positive Lebesgue measure in a persistent manner. 
 In  \cite{KS2017}, the first and fourth authors affirmatively answered it by showing that there is a \emph{dense} subset of any $\mathcal C^r$-Newhouse open set  of surface  diffeomorphisms with $2\leq r<\infty$
 such that any element of the dense set has an open subset 
 in the Birkhoff irregular set, 
by 
extending the  technology developed for a special surface diffeomorphism with a robust homoclinic tangency given by  Colli and Vargas \cite{CV2001}. 
Furthermore, we adopt the Colli-Vargas diffeomorphism to prove Theorem \ref{thm:main}. 
Therefore, it is  likely that Theorem \ref{thm:main} can be extended to  surface diffeomorphisms in a dense subset of any Newhouse open set. 
The main technical difficulty might be the control of higher order terms of the return map of diffeomorphisms in the dense set, 
which do not appear for the return map  of the Colli-Vargas diffeomorphism, see the expression \eqref{eq:0812bb}.

Furthermore, the above   result \cite{KS2017} was recently   extended
in \cite{BBi2020} to the $\mathcal C^\infty$ and $\mathcal C^\omega$ categories by introducing a geometric model, and Colli-Vargas' result was   extended
 in \cite{KNS2021} to a $3$-dimensional diffeomorphism 
   with a  $\mathcal C^1$-robust homoclinic tangency derived from a blender-horseshoe.
 Hence, we expect that Theorem \ref{thm:main} holds for $r=\infty $, $\omega$ and for $r=1$ when the dimension of $M$ is three.
We also remark that \cite{LR2016, Barrientos2021}  extended  the result of \cite{KS2017} to $3$-dimensional flows and  
 higher dimensional diffeomorphisms. 
  \end{rem}

  \begin{rem}[Irregular vectors]
  Ott and Yorke \cite{OY2008} asserted that they constructed an open set $U$  any point of which is Lyapunov irregular for \emph{any} non-zero vectors, but we believe that their proof has a gap. 
  What one can immediately conclude from their argument  is   that  any point in $U$ is Lyapunov irregular  for non-zero vectors in the \emph{flow} direction (and thus, the set of irregular vectors are not observable); 
  see  Section \ref{s:12} for  details.
  In Section \ref{s:03}, we further show that a surface diffeomorphism with a figure-8 attractor introduced by Guarino-Guih\'eneuf-Santiago \cite{GGS2019} has an open set every element of which is Lyapunov irregular for \emph{any} non-zero vectors. 
  \end{rem}
  \begin{rem}[Relation with Birkhoff irregular sets]
 One can find  differences between    Birkhoff irregular sets 
  and Lyapunov irregular sets, other than Theorem \ref{thm:main},  
   in the literature.
Indeed, it was already pointed out in Ott-Yorke \cite{OY2008} that  the figure-8 attractor has a positive Lebesgue measure set on which the time averages exist but the Lyapunov exponents do not exist (see also \cite{Furman1997}). 
Conversely,  diffeomorphisms whose Birkhoff irregular set has   positive Lebesgue measure  but  Lyapunov irregular set has zero   Lebesgue measure were exhibited in \cite{CYZ2020}. 
We also remark that, in contrast to the deterministic case,  under physical noise both Birkhoff  and Lyapunov irregular sets of any diffeomorphism have zero Lebesgue measure by \cite{Araujo2000} and \cite{NNT2021}.
  \end{rem}

In the rest of Section 1, we explain that several nonhyperbolic systems in the literature also have 
 Lyapunov irregular sets of positive Lebesgue measure (see, in particular, Section \ref{s:o1}). 
The purpose of the attention to these examples  are  not   to increase the collection of 
dynamics with observable Lyapunov irregular sets, 
 but rather to   understand  the mechanism making observable Lyapunov irregular sets, which is especially discussed in Section \ref{s:o2}. 

\subsection{Other examples}\label{s:o1}
\subsubsection{Figure-8 attractor}\label{s:12}
Ott and Yorke showed in \cite{OY2008} that a figure-8 attractor has the Lyapunov irregular set of positive Lebesgue measure as follows.
Let $(f^t)_{t\in \mathbb R}$ be a smooth flow on $\mathbb R^2$ generated by a vector field $V: \mathbb R^2 \to \mathbb R^2$ with an equilibrium point $p$ of saddle type with homoclinic orbits, that is,
the unstable manifold of $p$  coincides with the stable manifold of $p$ and consists of $\{ p\}$ and  two orbits $\gamma  _1$, $\gamma _2$. 
We also assume that the loops $\gamma _1\cup \{p\}$ and $\gamma _2 \cup \{p\}$ are attracting in the sense that 
$
\alpha _- > \alpha _+, 
$
where $\alpha _+$ and $-\alpha _- $ are eigenvalues of the linearized vector field of $V$ at $p$ with $\alpha _\pm >0$. 
Due to the assumption, one can find   open sets $U_1$ and $U_2$ inside and near  the loops $\gamma _1 \cup \{p\}$ and $\gamma _2 \cup \{p\}$, respectively,  such that the $\omega$-limit set of  $(f^t(x))_{t\in \mathbb R}$ is $\gamma _i\cup \{p\} $ for all $x\in U_i$ with $i= 1, 2$.
In this setting, $\gamma _1 \cup \gamma _2 \cup \{p\}$ is called a \emph{figure-8 attractor}.

It is easy to see that
  the Birkhoff irregular set of the figure-8 attractor is empty inside $U_1\cup U_2$:
in fact, if $x\in U_1 \cup U_2$, then
\[
\lim _{t\to\infty} \frac{1}{t} \int ^t _{0} \varphi \circ f^s(x) ds =\varphi (p)\quad \text{for any  continuous function $\varphi : \mathbb R^2\to \mathbb R$}
\]
 (cf.~\cite{GGS2019}).
On the other hand, Ott and Yorke showed in \cite{OY2008} that any point $x$ in $U _1 \cup U_2$ is Lyapunov irregular for the vector $V(x)$, that is, the Lyapunov irregular set has positive Lebesgue measure (in fact, they implicitly put an additional assumption for simple calculations, 
see Section \ref{a:pt}). 

As previously mentioned, they also asserted that $x\in U _1 \cup U_2$ is  Lyapunov irregular for any non-zero vector $v$, because 
$(   \frac{1}{t} \log \det (Df^t(x) V(x) \, Df^t(x)v) )_{t\in \mathbb R}$ converges to $\alpha _+ - \alpha _-$ as $t\to \infty$. 
However, the oscillation of $( \frac{1}{t} \log \Vert Df^t(x)v\Vert )_{t\in \mathbb R}$ is not a direct consequence of this fact and  the oscillation of $( \frac{1}{t}\log \Vert Df^t(x) V(x)\Vert )_{t\in \mathbb R}$ when $v$ is not parallel to $V(x)$ because the angle between $Df^t(x) V(x)$ and  $ Df^t(x)v$ can also oscillate. 

\subsubsection{Bowen flow}\label{s:12o}
In \cite{OY2008}, Ott and Yorke also indicated  the oscillation of Lyapunov exponents for a vector along the  flow direction  for a special Bowen flow  by  a numerical experiment.
By following the argument of \cite{OY2008} for a figure-8 attractor, we can rigorously prove that the Lyapunov irregular set has positive Lebesgue measure for any Bowen flow. 

Let
$(f^t)_{t\in \mathbb R}$ be a smooth flow on $\mathbb R^2$ generated by a vector field $V: \mathbb R^2 \to \mathbb R^2$ of class $\mathcal C^{1+\alpha }$ ($\alpha >0$)  with two equilibrium points $p$ and $\hat p$  and two heteroclinic orbits $\gamma _1$ and $\gamma _2$ connecting the points, which are included
in the unstable and stable manifolds of $p$ respectively, such that the closed curve
$\gamma := \gamma _1 \cup \gamma _2 \cup \{p\} \cup \{ \hat p\}$ is attracting in the following sense: 
if we denote the expanding and
contracting eigenvalues of the linearized vector field around $p$ by $\alpha _+$ and $-\alpha _-$, and
the ones around $p_2$ by $\beta _+$ and $-\beta _-$, then 
\begin{equation*}
\alpha _-  \beta _- > \alpha _+  \beta _+ .
\end{equation*}
In this setting, one can find  an open set $U$ inside and near  the closed curve $\gamma$   such that the $\omega$-limit set of  $(f^t(x))_{t\in \mathbb R}$ is $\gamma $ for all $x\in U$. 
As explained, it was proven in \cite{Gaunersdorfer1992,Takens1994}  that any point in $U$  is Birkhoff irregular.
In fact, if $x\in U$, then one can find time sequences $(\tau _n)_{n\in \mathbb N}$, $(\hat \tau _n)_{n\in \mathbb N}$  (given in Section \ref{a:pt}) such that 
\begin{equation}\label{eq:0805b}
\begin{split}
&\lim _{n\to\infty} \frac{1}{\tau _n} \int ^{\tau _n} _{0} \varphi \circ f^s(x) ds =\frac{r \varphi (p) + \varphi (\hat p)}{1+ r}, \\
 &\lim _{n\to\infty} \frac{1}{\hat \tau _n} \int ^{\hat \tau _n} _{0} \varphi \circ f^s(x) ds =\frac{ \varphi (p) + \hat r \varphi (\hat p)}{1+ \hat r}
 \end{split}
\end{equation}
for any  continuous function $\varphi : \mathbb R^2\to \mathbb R$, where $r =\frac{\alpha _-}{\beta _+}$ and $\hat r =\frac{\beta _-}{\alpha _+}$.
According to Takens   \cite{Takens1994} we call such a flow a \emph{Bowen flow}.
We can show the following proposition for 
 the Lyapunov irregular set, whose proof will be given in Section \ref{a:pt}.
\begin{prop}\label{prop:0812c}
For the Bowen flow $(f^t)_{t\in \mathbb R}$  with the open set $U$ given above, 
any point $x$ in $U$ is Lyapunov irregular for the vector $V(x)$.
\end{prop}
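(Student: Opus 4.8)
The plan is to exploit the elementary flow identity $Df^t(x)\,V(x)=V(f^t(x))$, which holds because $V$ generates $(f^t)_{t\in\mathbb R}$: differentiating $f^{t+s}=f^t\circ f^s$ with respect to $s$ and setting $s=0$ yields $V(f^t(x))=Df^t(x)\,V(x)$. Hence $\Vert Df^t(x)V(x)\Vert=\Vert V(f^t(x))\Vert$, and it suffices to prove that $t\mapsto \frac1t\log\Vert V(f^t(x))\Vert$ has no limit as $t\to\infty$, for every $x\in U$. Note first that $\omega(x)=\gamma$ for $x\in U$ while the $\omega$-limit set of each equilibrium is a singleton, so $x$ is neither $p$ nor $\hat p$ and $V(x)\neq 0$, as required by the definition of Lyapunov irregularity.

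Next I would identify the two extreme behaviours. On one hand $\Vert V\Vert$ is bounded on the compact closure of the orbit, so $\limsup_{t\to\infty}\frac1t\log\Vert V(f^t(x))\Vert\le 0$; on the other hand $\omega(x)=\gamma$ contains a point $y\in\gamma_1$ with $V(y)\neq 0$, so there are times $t_n\to\infty$ with $f^{t_n}(x)\to y$, and hence $\frac1{t_n}\log\Vert V(f^{t_n}(x))\Vert\to 0$. Therefore $\limsup_{t\to\infty}\frac1t\log\Vert V(f^t(x))\Vert=0$.

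The core of the argument is to produce times $c_n\to\infty$ along which $\frac1{c_n}\log\Vert V(f^{c_n}(x))\Vert$ stays bounded away from $0$ from below. For this I would invoke the standard description of the return dynamics near a Bowen cycle (the same analysis that underlies \eqref{eq:0805b}, cf.~\cite{Gaunersdorfer1992,Takens1994}): for $n$ large the orbit of $x$ enters a fixed small neighbourhood of $p$ at a time $a_n$, at distance $d_n$ from $W^s_{\loc}(p)$, where $d_n\to 0$, $\log(1/d_{n+1})/\log(1/d_n)\to \alpha_-\beta_-/(\alpha_+\beta_+)>1$, and one full cycle lasts $O(\log(1/d_n))$ units of time; since $\log(1/d_n)$ then grows geometrically, the partial sums of the cycle-times are dominated by the last term and $a_n=O(\log(1/d_n))$. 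Comparing, on the fixed small neighbourhood, the true orbit with the linearized flow at $p$ (the hyperbolicity of $DV(p)$ and the $\mathcal C^{1+\alpha}$ regularity give a $\mathcal C^1$-small remainder after shrinking the neighbourhood), one finds that during this passage the orbit comes within distance $O(d_n^{\theta})$ of $p$, at some time $c_n\in[a_n,a_{n+1}]$, for a fixed exponent $\theta\in(0,1)$ ($\theta=\alpha_-/(\alpha_++\alpha_-)$ in the linearized model). Since $V(p)=0$ and $V$ is Lipschitz, $\Vert V(f^{c_n}(x))\Vert\le C\,d_n^{\theta}$, whence $\log\Vert V(f^{c_n}(x))\Vert\le -\frac{\theta}{2}\log(1/d_n)$ for $n$ large; together with $c_n\le a_{n+1}=O(\log(1/d_n))$ this gives $\frac1{c_n}\log\Vert V(f^{c_n}(x))\Vert\le -\delta<0$ for a constant $\delta>0$ and all large $n$. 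Hence $\liminf_{t\to\infty}\frac1t\log\Vert V(f^t(x))\Vert<0=\limsup_{t\to\infty}\frac1t\log\Vert V(f^t(x))\Vert$, so the Lyapunov exponent of $x$ for $V(x)$ does not exist, i.e.\ $x$ is Lyapunov irregular for $V(x)$.

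I expect the main obstacle to be precisely this saddle-passage estimate: with only $\mathcal C^{1+\alpha}$ smoothness, hence no exact smooth linearization available, one must confirm that the orbit's closest approach to $p$ really is of size a positive power of the entry distance $d_n$ and is reached within time $O(\log(1/d_n))$ — essentially the planar-saddle computation behind the figure-8 analysis of \cite{OY2008}, now carried out at each of the two equilibria of the Bowen cycle. The rest is soft.
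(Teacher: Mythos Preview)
Your proposal is correct and follows essentially the same approach as the paper: both use the flow identity $Df^t(x)V(x)=V(f^t(x))$, show $\limsup=0$ via return times to a regular point of the cycle, and show $\liminf<0$ via the times of closest approach to $p$ during each saddle passage, using the geometric growth of $\log(1/d_n)$. The only notable differences are that the paper works in explicit linearizing coordinates (thereby sidestepping the $\mathcal C^{1+\alpha}$ obstacle you flag) and actually computes the exact value $\liminf=\dfrac{\alpha_+\beta_+-\alpha_-\beta_-}{\alpha_++\beta_++\alpha_-+\beta_-}$, whereas you only need, and only establish, a negative upper bound.
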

\begin{rem}
For the time sequences in \eqref{eq:0805b} for which the time averages oscillate, we will see that
\begin{equation}\label{eq:0812e}
\lim _{n\to\infty} \frac{1}{\tau _n} \log \Vert D  f^{\tau _n}(x) \Vert = \lim _{n\to\infty} \frac{1}{\hat \tau _n} \log \Vert D  f^{\hat \tau _n}(x) \Vert =0
\end{equation}
for any $x\in U$. 
That is,  
 the mechanism causing the oscillation of Lyapunov exponents is different from the one
    leading to oscillation of time averages; see Section \ref{s:o2} for details.
\end{rem}

\subsubsection{
Guarino-Guih\'eneuf-Santiago's 
simple figure-8 attractor}\label{s:03}

A disadvantage of the arguments in Sections \ref{s:12} and \ref{s:12o} is that,  although it follows the arguments that a point $x$ in the open set $U_1 \cup U_2$ or $U$ is Lyapunov irregular for the vector $V(x)$ generating the flow, it is unclear whether $x$ is also Lyapunov irregular for a vector which is not parallel to $V(x)$, because the derivative $Df^t (x)$ at the return time $t$ to  neighborhoods of $p$ or $p \cup \hat p$ is not explicitly calculated  in the arguments (instead, the fact that $Df^t (x) V(x) = V(f^t(x))$ is used).  
On the other hand, Guarino, Guih\'eneuf and Santiago in \cite{GGS2019} constructed a surface diffeomorphism with a pair of saddle connections forming a figure of eight and whose return map is affine (see Proposition \eqref{eq:GGSkey}).
By virtue of this simple form of the return map, it is quite easy to prove
 that the diffeomorphism has an open set each element of which is Lyapunov irregular for \emph{any} non-zero vectors.
Furthermore, we will see in Section \ref{s:o2}  that the calculation is a prototype of the proof of Theorem \ref{thm:main}.

Fix a constant $\sigma  > 1$ and numbers $a, b$ such that $1 <  a < b < \sigma $.
Let $I=[a,b]$ and  denote the map $\mathbb R^2 \ni (x, y) \mapsto (\sigma ^{-2} x,\sigma y)$ by $H$.
For every $n\in \mathbb{N}$, 
let $S_n =I\times \sigma ^{-n} I$ and $U_n =\sigma ^{-n}I \times I$,  
so that 
\[
H^n ( S_n) = U_{2n} \quad \text{and } \quad H^n : S_n \to U_{2n} \; \text{is a diffeomorphism}.
\]
See Figure \ref{fig-GGS}.
Furthermore, let $R: \mathbb{R}^2\to \mathbb{R}^2$ be the affine map which is a rotation of  $-\frac{\pi }{2}$ around the point $( \frac{a+b}{2}, \frac{a+b}{2} )$, 
 i.e.
\[
R(x,y) = (a+b -y , x).
\]
We say that a diffeomorphism
of the plane is said to be 
\emph{compactly supported} if it equals the identity outside a ball
centered at the origin $O$, 
and moreover the diffeomorphism has a \emph{saddle (homoclinic) connection}
if it has a separatrix of the stable manifold coinciding with a separatrix of the unstable manifold associated with a saddle periodic point $O$, so that it bounds an open 2-disk. Specially, 
we call the union of $O$ and a pair of saddle connections associated with $O$ 
a \emph{figure-8 attractor} at $O$, and it satisfies $W^u(O)=W^s(O)$.

\begin{prop}[{\cite[Proposition 3.4]{GGS2019}}]\label{prop:GGS}
There exists a compactly supported $\mathcal C^\infty$-diffeomorphism $f:\mathbb R^2 \to \mathbb R^2$
which has a saddle connection of a saddle fixed point $O=(0,0)$, 
and moreover there are positive integers $n_0, k_0$ such that the following holds:
\begin{enumerate}[{\rm (a)}]
\item There is a neighborhood $\mathcal V$ of $O$  such that 
\[
\bigcup _{n\geq n_0} \bigcup _{0\leq \ell \leq n}f^\ell (S_n)\subset \mathcal V \quad \text{and } \quad f\vert _{\mathcal V} = H.
\]
\item $f^{k_0} (U_n)=S_n$ for all $n\geq n_0$ and 
\[
f ^{k_0}(x,y) =R(x,y) \quad \text{for all $(x,y) \in [0, \sigma ^{-2n_0}] \times I$}.
\]
\end{enumerate}
In particular, for every $n \geq  n_0$, 
\begin{equation}\label{eq:GGSkey}
 f^{n+k_0}(x, y) = (a + b - \sigma ^ny , \sigma ^{-2n} x) \in S_{2n} \quad \text{for all $(x, y) \in  S_n$}.
\end{equation}
\end{prop}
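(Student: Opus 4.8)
The plan is to prescribe $f$ explicitly on two regions and interpolate. Near $O$ I would put $f=H$, where $H(x,y)=(\sigma^{-2}x,\sigma y)$ is a linear hyperbolic saddle whose local stable and unstable manifolds lie on the $x$- and $y$-axes; I would choose a neighbourhood $\mathcal V$ of $O$ — a box whose $x$- and $y$-extents slightly exceed $b$ but stay below $\sigma b$ — and set $f|_{\mathcal V}=H$. Since $1<a<b<\sigma$, for every $n\ge n_0$ (with $n_0$ large) one has $H^\ell(S_n)=[\sigma^{-2\ell}a,\sigma^{-2\ell}b]\times[\sigma^{\ell-n}a,\sigma^{\ell-n}b]\subseteq(0,b]^2\subset\mathcal V$ for all $0\le\ell\le n$, which is the containment in (a); in particular $H^n(S_n)=\sigma^{-2n}I\times I=U_{2n}$, and each $U_n$ ($n\ge n_0$) lies in $\mathcal V$.

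For the global return, note that the quarter-turn $R(x,y)=(a+b-y,x)$ satisfies $R(U_n)=R(\sigma^{-n}I\times I)=I\times\sigma^{-n}I=S_n$, so $R$ carries a neighbourhood of the local unstable manifold $\{0\}\times[a,b]$ onto a neighbourhood of the local stable manifold $[a,b]\times\{0\}$, reversing orientation. The point is that a single application of $H$ already pushes the strip $[0,\sigma^{-2n_0}]\times I$ out of $\mathcal V$ through its top (because its $y$-image reaches $\sigma b$), so there is room to define $f$ on $M\setminus\mathcal V$ — equal to the identity outside a large ball, hence compactly supported — in such a way that the $k_0$-th iterate $f^{k_0}$ equals $R$ on $[0,\sigma^{-2n_0}]\times I$ and sends every $U_n$ ($n\ge n_0$) onto $S_n$, for a suitable fixed $k_0$; the $k_0$ intermediate steps leave $\mathcal V$, so this is compatible with $f|_{\mathcal V}=H$. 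Producing such an $f$ is a standard isotopy-extension / bump-function construction. Gluing the positive $y$-axis to the positive $x$-axis in this way makes $W^u(O)=W^s(O)$ a loop bounding an open $2$-disk, i.e.\ the asserted saddle connection (only one is needed).

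Finally, \eqref{eq:GGSkey} follows by composition: for $(x,y)\in S_n=I\times\sigma^{-n}I$ the first $n$ iterates coincide with $H$ by (a), so $f^n(x,y)=(\sigma^{-2n}x,\sigma^n y)$ with $\sigma^n y\in I$, whence $f^n(x,y)\in U_{2n}$; applying $f^{k_0}=R$ there gives $f^{n+k_0}(x,y)=(a+b-\sigma^n y,\ \sigma^{-2n}x)$, and since $\sigma^n y\in I$ its first coordinate lies in $[a,b]=I$ while its second lies in $\sigma^{-2n}I$, so the point lies in $S_{2n}$.

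The main obstacle is the gluing step, not the algebra: one must exhibit a single $\mathcal C^\infty$ diffeomorphism that is simultaneously compactly supported, exactly $H$ on $\mathcal V$, exactly $R$ after precisely $k_0$ steps on the prescribed strip (with $f^{k_0}(U_n)=S_n$ for all $n\ge n_0$), and which keeps $\bigcup_{0\le\ell\le n}f^\ell(S_n)\subset\mathcal V$ for every $n\ge n_0$. Reconciling all of these forces a careful and mutually consistent choice of the size of $\mathcal V$, of $n_0$, and of $k_0$; granted that, the remaining verifications are the short computations above.
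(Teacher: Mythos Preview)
The paper does not give its own proof of this proposition; it is quoted verbatim from \cite[Proposition~3.4]{GGS2019} and used as a black box. Your sketch is a faithful outline of the construction carried out in \cite{GGS2019}: linearise near $O$ by $H$, realise the global return as the affine quarter-turn $R$, and interpolate by a compactly supported diffeomorphism. Your derivation of \eqref{eq:GGSkey} from (a) and (b) is exactly the intended computation, and you correctly identify the gluing as the only nontrivial step.

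One small quibble: strictly from (b) as stated, $U_{2n_0}=\sigma^{-2n_0}I\times I$ is not contained in $[0,\sigma^{-2n_0}]\times I$ (since $b>1$), so your derivation of \eqref{eq:GGSkey} from the formula $f^{k_0}=R$ on that strip literally covers only $n\ge n_0+1$. This is a harmless boundary artefact (replace $n_0$ by $n_0+1$, or enlarge the strip to $[0,\sigma^{-2n_0}b]\times I$), and in any case \cite{GGS2019} handles it. Otherwise your outline matches the source.
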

\begin{figure}[hbt]
\centering
\scalebox{0.8}{
\includegraphics[clip]{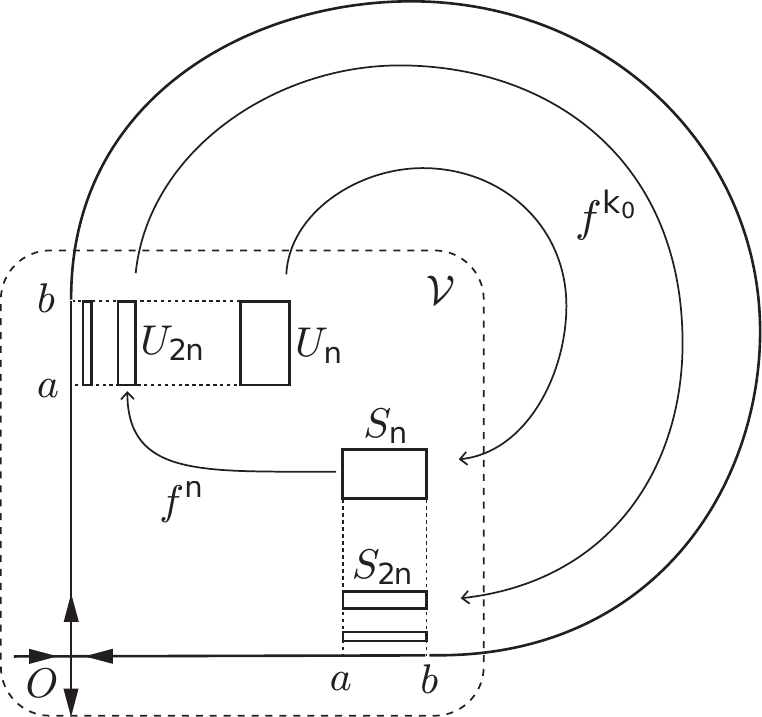}
}
\caption{ Guarino-Guih\'eneuf-Santiago's  diffeomorphism} 
\label{fig-GGS}
\end{figure}
\begin{rem}
If we suppose that $f|_{V_3} = s_h \circ f| _{V_1} \circ s_v$, 
where 
$V_i$ is the $i$-th quadrant of $\mathbb R^2$, and
$s_v, s_h : \mathbb{R}^2 \to \mathbb{R}^2$ are symmetry maps with respect to the vertical and horizontal axes, respectively, $f$ has a figure-8 attractor at $O$, see \cite{GGS2019}.
\end{rem}
Although the dynamics in  Proposition \ref{prop:GGS} is defined on $\mathbb R^2$, one can easily embed the restriction of $f$ on the support of $f$ into any compact surface.
It follows from \cite[Corollary 3.5]{GGS2019} that if $z\in S_{n_0}$, then
\[
\lim _{n\to \infty} \frac{1}{n} \sum _{j=0}^{n-1} \varphi (f^j (z)) = \varphi (O) \quad \text{for any continuous function $\varphi :\mathbb R^2\to \mathbb R$}.
\]
In particular, any point in $S_{n_0}$ is  Birkhoff regular.
Our result for the Lyapunov irregular set 
  is the following, whose proof will be given in Section \ref{s:0811b}.

\begin{thm}\label{prop:0811}
For the diffeomorphism $f$ and the rectangle $S_{n_0}$ given in Proposition \ref{prop:GGS}, any point $z$ in $S_{n_0}$ is Lyapunov irregular for any non-zero vector.
\end{thm}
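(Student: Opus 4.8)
The strategy is to iterate the explicit return map \eqref{eq:GGSkey} along an orbit starting at a point $z\in S_{n_0}$, compute the derivative cocycle along that orbit, and read off the oscillation of $\frac1N\log\|Df^N(z)v\|$ directly. Write $z=z_0\in S_{n_0}$. By \eqref{eq:GGSkey}, $f^{n_0+k_0}(z_0)\in S_{2n_0}$, and inductively, setting $m_0=n_0$ and $m_{j+1}=2m_j$ (so $m_j=2^jn_0$), we get $z_{j+1}:=f^{m_j+k_0}(z_j)\in S_{m_{j+1}}$, where $z_j\in S_{m_j}$. The key point is that the return map restricted to $S_{m_j}$ is the \emph{affine} map $(x,y)\mapsto(a+b-\sigma^{m_j}y,\sigma^{-2m_j}x)$, whose derivative is the constant matrix $A_j=\begin{pmatrix}0&-\sigma^{m_j}\\ \sigma^{-2m_j}&0\end{pmatrix}$. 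Inside $\mathcal V$ the map is just $H$, with constant derivative $\mathrm{diag}(\sigma^{-2},\sigma)$; the remaining $k_0$ steps of each return pass through a fixed compact region where $\|Df^{\pm1}\|$ is bounded by some constant, contributing $O(1)$ per return. Hence up to a controlled error, $Df^{N_j}(z_0)=A_{j-1}A_{j-2}\cdots A_1 A_0$ (times bounded factors), where $N_j=\sum_{i=0}^{j-1}(m_i+k_0)$.

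The matrices $A_i$ are (up to sign and a rotation) diagonal in the \emph{same} pair of coordinate axes after one swap: $A_i$ sends the $x$-axis to the $y$-axis scaled by $\sigma^{-2m_i}$ and the $y$-axis to the $x$-axis scaled by $\sigma^{m_i}$. Composing an even number of them returns to the original axes; so for a vector $v=(v_1,v_2)$, $\|A_{2\ell-1}\cdots A_0 v\|$ is comparable to $\max\bigl(|v_1|\,\sigma^{\sum(\text{odd-indexed }m_i)-2\sum(\text{even-indexed }m_i)},\ |v_2|\,\sigma^{-2\sum(\text{odd})+\sum(\text{even})}\bigr)$ — one should track both components since either may dominate. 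Because $m_i=2^in_0$ grows geometrically, the partial sums $\sum_{i<j}\pm 2^i$ are dominated by their last term $\pm 2^{j-1}n_0$, so $\frac1{m_{j-1}}$ times the exponent in $\|Df^{N_j}(z_0)v\|$ approaches one of finitely many distinct nonzero constants depending on the parity of $j$ (roughly $-2$ versus $+1$ in suitable normalization, reflecting the two eigenvalues of $H$). Meanwhile $N_j\asymp m_{j-1}$ as well (again geometric growth dominated by the last term), so $\frac1{N_j}\log\|Df^{N_j}(z_0)v\|$ itself oscillates between two different values along the subsequences $j$ even and $j$ odd. One must also check that \emph{no} nonzero $v$ escapes this: both limits cannot simultaneously vanish because the product of the two scaling exponents per double-step is the determinant contribution $\log(\sigma^{-2}\cdot\sigma)=-\log\sigma<0$, consistent with the $-\log\sigma^? $ net contraction, forcing at least one direction to have nonzero exponential rate with oscillating normalization.

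**Key steps, in order.** (i) Set up the orbit $z_j\in S_{m_j}$ with $m_j=2^jn_0$ via \eqref{eq:GGSkey}, and record $z_{j}\in S_{m_j}\subset\mathcal V$. (ii) Factor $Df^{N_j}(z_0)$ as a product of the constant matrices $A_i$ coming from the affine returns, the diagonal $DH$ factors, and $k_0$-step bounded factors; make the $O(1)$ error bookkeeping precise using compactness of the support of $f$ and continuity of $Df$. (iii) Explicitly compute the product $A_{j-1}\cdots A_0$ (it is diagonal or anti-diagonal depending on parity, with entries given by sums of signed powers of $\sigma$). (iv) Use geometric dominance of $m_{j-1}$ in both $\sum_{i<j}\pm m_i$ and in $N_j=\sum_{i<j}(m_i+k_0)$ to compute $\lim_j \frac1{N_j}\log\|Df^{N_j}(z_0)v\|$ along even and odd subsequences, for an arbitrary fixed nonzero $v$, obtaining two distinct values. (v) Conclude \eqref{eq:0730a} fails to exist for this $v$, and since $v$ was arbitrary nonzero, $z$ is Lyapunov irregular for every nonzero vector; since $z\in S_{n_0}$ was arbitrary, $S_{n_0}$ lies in the Lyapunov irregular set.

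**Main obstacle.** The genuinely delicate point is step (iv): verifying that the oscillation survives for \emph{every} nonzero $v$, i.e. that the two accumulation values of $\frac1{N_j}\log\|Df^{N_j}(z_0)v\|$ are distinct for all $v\neq 0$, not merely for generic $v$. Because $A_{j-1}\cdots A_0$ is (anti)diagonal, a vector lying exactly on a coordinate axis is scaled by only one of the two exponent sequences, so one must check separately that the single relevant sequence of normalized exponents still oscillates — which it does, since the exponent $\frac1{N_j}(\text{signed sum of }m_i)$ alternates sign-pattern with the parity of $j$ while $N_j$ stays comparable to $m_{j-1}$. Keeping the $k_0$-step and $DH$-contributions from contaminating this limit (they are $O(1)$ against an $O(m_{j-1})$ main term, hence negligible after dividing by $N_j\asymp m_{j-1}$) is routine but must be stated carefully. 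A secondary nuisance is orienting the axes correctly through the successive coordinate swaps in $R$ and in \eqref{eq:GGSkey}; this is bookkeeping, not a real difficulty.
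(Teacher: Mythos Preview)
There is a genuine gap in step~(iv), and it is the opposite of what you flag as the main obstacle. First a minor point: since \eqref{eq:GGSkey} already gives the \emph{full} return map $f^{m_j+k_0}|_{S_{m_j}}$, your $A_j=\begin{pmatrix}0&-\sigma^{m_j}\\ \sigma^{-2m_j}&0\end{pmatrix}$ is exactly $Df^{m_j+k_0}$ on $S_{m_j}$, so $Df^{N_j}(z)=A_{j-1}\cdots A_0$ on the nose --- there are no separate ``$DH$ factors'' or ``$k_0$-step bounded factors'' to book-keep. Now compute that product with $m_i=2^in_0$: for even $j=2d$ it is $\pm\operatorname{diag}\bigl(1,\ \sigma^{(1-2^{2d})n_0}\bigr)$, and for odd $j=2d-1$ it is anti-diagonal with entries $\pm\sigma^{n_0}$ and $\pm\sigma^{-2^{2d-1}n_0}$. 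Hence for any $v=(s,u)$ with $su\neq0$ one has $\|Df^{N(2d)}(z)v\|\to|s|$ and $\|Df^{N(2d-1)}(z)v\|\to\sigma^{n_0}|u|$ as $d\to\infty$, both \emph{bounded}, so
\[
\frac{1}{N_j}\log\bigl\|Df^{N_j}(z)v\bigr\|\longrightarrow 0\qquad\text{along \emph{both} parities of }j.
\]
Your even/odd return-time argument therefore detects oscillation only for $v$ on a coordinate axis (there one does get $0$ versus $-\log\sigma$, exactly as in \eqref{eq:0813d}), and fails precisely for the generic vectors you treated as the easy case.

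The missing idea, which the paper supplies, is to sample at \emph{intermediate} times $N_j+\ell$ with $0\le\ell\le m_j$, while the orbit is still inside $\mathcal V$ and $Df=DH=\operatorname{diag}(\sigma^{-2},\sigma)$. Then $Df^{N_j+\ell}(z)=\operatorname{diag}(\sigma^{-2\ell},\sigma^{\ell})\,Df^{N_j}(z)$; taking $j$ even (so $Df^{N_j}(z)$ is diagonal with top entry $\pm1$) and writing $\ell=\lfloor\zeta\,2^{j}n_0\rfloor$, one finds for $su\neq0$ that
\[
\frac{1}{N_j+\ell}\log\bigl\|Df^{N_j+\ell}(z)v\bigr\|\longrightarrow \vartheta(\zeta)\log\sigma,
\qquad \vartheta(\zeta)=\begin{cases}-\tfrac{2\zeta}{1+\zeta},&\zeta<\tfrac13,\\[2pt] -\tfrac{1-\zeta}{1+\zeta},&\zeta\ge\tfrac13,\end{cases}
\]
which is $0$ at $\zeta=0$ but $-\tfrac12$ at $\zeta=\tfrac13$. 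This produces the required oscillation for non-axis $v$. In short: keep your argument for axis vectors, but for generic $v$ you must leave the sequence of exact return times and exploit the linear dynamics inside $\mathcal V$.
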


\begin{rem}
We note that the piecewise expanding map on a surface constructed by Tsujii \cite{Tsujii2000} has a return map around the origin whose form is quite similar to one of the diffeomorphism of Theorem \ref{prop:0811}. 
So,  it is natural to expect that (a slightly modified version of) the   map in \cite{Tsujii2000} has an open set consisting of Lyapunov irregular points for any non-zero vectors.
\end{rem}

\subsection{Idea of proofs of Theorems \ref{thm:main} and \ref{prop:0811}: anti-diagonal matrix form of the return map}\label{s:o2}
\subsubsection{The figure-8 attractor}
We start from  the Guarino-Guih\'eneuf-Santiago's 
 figure-8 attractor. 
 Let $f$ be the diffeomorphism given in  Proposition \ref{prop:GGS}.
 Then, it follows from \eqref{eq:GGSkey}   that for any $n\geq n_0$ and $z\in S_n$, $Df^{n+k_0}(z) $ is an anti-diagonal matrix, 
\begin{equation}\label{eq:0812a}
Df^{n+k_0}(z) =\left(\begin{array}{cc}0 & -\sigma^{n} \\ \sigma^{-2n} & 0\end{array}\right),
\end{equation}
so   $Df^{(2n+k_0) +(n+k_0)}(z)  =Df^{2n+k_0}(f^{n+k_0}(z)) Df^{n+k_0}(z) $ is a diagonal matrix. 
Hence, if we define the $d$-th return time $N(d)$  from $S_{n_0}$ to  $\bigcup _{n\geq n_0} S_n$ with $d\geq 1$ by
\begin{equation}\label{eq:0812b1}
 N(d) = \sum _{d'=1}^{d} n(d'), \quad 
 n(d') = 2^{d'-1}n_0 +k_0
\end{equation}
(notice that $f^{N(d)} (S_{n_0}) \subset S_{2^d n_0}$), then it follows from a 
 chain of calculations that 
for any $z\in S_{n_0}$ 
\begin{equation}\label{eq:0812d1}
\begin{split}
& Df^{N(2d-1)}(z)=(-1)^{d-1}
\left(\begin{array}{cc}0 & -\sigma^{n_0 }\\ \sigma^{-2^{2d-1}n_{0}} & 0\end{array}\right),\\
& Df^{N(2d)}(z)=(-1)^{d}
\left(\begin{array}{cc}
1 & 0\\ 
0 & -\sigma^{(-2^{2d}+1)n_{0}}
\end{array}\right),
\end{split}
\end{equation}
and thus, for any $v\not\in \mathbb R  \left(
    \begin{array}{c}
      1\\
      0
    \end{array}
  \right) \cup \mathbb R  \left(
    \begin{array}{c}
      0\\
      1
    \end{array}
  \right) $,
\begin{equation}\label{eq:0812b1c}
\displaystyle
\lim _{d \to \infty} \frac{1}{N(d)} \log \left\Vert Df^{N(d)}(z) v \right\Vert 
=0.
\end{equation}
Furthermore,  one can see by a direct calculation that with the function $\vartheta :  [0,1]\to \mathbb R$ given by $\vartheta (\zeta )=-(1-\zeta )/(1+\zeta )$ if $\zeta \geq 1/3$ and $\vartheta (\zeta )=-2\zeta /(1+\zeta )$ if $\zeta <1/3$, it holds that for any $\zeta \in [0,1]$, 
\begin{equation}\label{eq:0812b1c2}
\lim _{d \to \infty} \frac{1}{N(4d) +\lfloor\zeta 2^{4d}n_0\rfloor} \log \left\Vert Df^{N(4d) +\lfloor\zeta 2^{4d} n_0\rfloor}(z) v \right\Vert  = \vartheta (\zeta ) \log \sigma ,
\end{equation}
where $\lfloor a \rfloor$ for $a\in \mathbb R$ is the greatest integer less than or equal to $a$. 
Note that $N(4d) =2^{4d}n_0 + (4dk_0-n_0)$, so   $N(4d) +\lfloor\zeta 2^{4d}n_0\rfloor$ over $\zeta \in [0,1]$ essentially realizes all times from $N(4d)$ to $N(4d+1)$.
A detailed calculation will be given in Section \ref{s:0811b}.

\subsubsection{The Newhouse open set}
Next we   consider the diffeomorphisms in the Newhouse open set given in Theorem \ref{thm:main}.
Colli and Vargas constructed  in \cite{CV2001} a 
 diffeomorphism 
$g$ in a Newhouse open set  with constants $0<\lambda <1 <\sigma$ such that 
for any $\mathcal C^r$-neighborhood $\mathcal O$ of $g$ and  any increasing sequence $(n_k^0)_{k\geq 0}$ of integers with  $\limsup _{k\to\infty} n_{k+1}/n_k <\infty $, 
one can find a 
 diffeomorphism $f$ in $\mathcal O$ together with  a sequence of rectangles $(R_k)_{k=1}^\infty$ 
and a sequence of increasing sequence $(\tilde n_k)_{k\geq 1}$ of integers with $\tilde n_k =O(k)$
 such that $f^{n_k+2} (R_k) \subset R_{k+1}$ and for each $(\tilde x_k+x,y)\in R_k$, 
\begin{equation}\label{eq:0812bb}
f^{n_k+2}(\tilde x_k+x,y) = (\tilde x_{k,1}-\sigma^{2n_k}x^2-\lambda^{n_k}y, \sigma^{n_k}x),
\end{equation}
where $n_k= n_k^0 + \tilde n_k$ and $(\tilde x_k,0)$ is the center of $R_k$,  see Theorem \ref{prop:0812} for details. 
Thus,  the  derivative of the return map has the form
\begin{equation}\label{eq:0812b}
Df^{n_k+2}(\tilde x_k +x, y)= \left(                 
  \begin{array}{cc}   
    -2\sigma^{2n_k}x & -\lambda^{n_k}  \\  
    \sigma^{n_k}      &  0             \\  
  \end{array}
\right).
\end{equation}
Compare this formula with \eqref{eq:0812a} for $n =n(d) -k_0$  and note that $\lim _{d\to \infty}(n(d+1) -k_0)/(n(d)-k_0) =2$.

The biggest obstacle in \eqref{eq:0812b}
 to  repeat the above calculation for Guarino-Guih\'eneuf-Santiago's  figure-8 attractor is the  term $-2\sigma ^{2n_k} x$: 
the absolute value of the term
  should be as small as the absolute value of $-\lambda ^{n_k}$ of \eqref{eq:0812b}, while $\sigma ^{2n_k}$ may be much larger than $\lambda ^{n_k}$  because $0<\lambda <1 <\sigma $.  
Therefore, the key point in the proof is to    find a subset $U_k$ of $R_k$ such that any $x\in U_k$ satisfies the required condition $\vert -2\sigma ^{2n_k} x\vert <\xi \vert -\lambda ^{n_k}\vert$ with a   positive constant $\xi $ independently of $k$  (Lemma \ref{lem4}), 
and to show $f^{n_k +2}(U_k) \subset U_{k+1}$ (Lemma \ref{lem2}).  

\subsubsection{Some technical observations}
Finally, we give a couple of (more technical) remarks on the similarity of mechanics leading to observable Lyapunov irregular sets for the dynamics of this paper. 
\begin{rem}
To understand the time scale $N(4d) +\lfloor\zeta 2^{4d}n_0\rfloor$ of \eqref{eq:0812b1c2}, calculations of (partial) Lyapunov exponents for the Bowen flow might be helpful. 
Let $(f^t)_{t\in\mathbb R}$, $V$, $p$, $\hat p$, 
$U$ be as in Section \ref{s:12o}. 
Let $N$ and $\hat N$ be  small neighborhoods of $p$ and $\hat p$, respectively,  such that $N \cap \hat N =\emptyset$.
Fix $z\in U$ and let 
$\tau _n$ and $\hat \tau _n$ be 
the  $n$-th return time of $z$ to $N$ and $\hat N$, respectively
(see Section \ref{a:pt} for their precise definition). 
Then,   since $Df^t (z)V(z) =V(f^t(z))$ for each $t\geq 0$, 
both $\Vert Df^{\tau _n} (z) V(z)\Vert $ and $\Vert Df^{\hat \tau _n} (z) V(z)\Vert $  are  bounded from above and below uniformly with respect to $n$, which implies \eqref{eq:0812e} (while \eqref{eq:0805b} is a consequence of  \cite{Takens1994}).

We further define  $\rho _n$  as the time $t$ in $[0, \tau _{n+1}-\tau _n]$ at which $f^{\tau _n+ t}(z)$ makes  the closest approach to $p$ 
  (that is, $\rho _n$ is the the minimizer of 
$
\Vert f^{\tau _n + t}(z) - p\Vert 
$
over $0\leq t \leq \tau _{n+1}-\tau _n$). 
Then, since the vector field $V$ is zero at $p$, it can be expected that $\Vert Df^{\tau _n +\rho _n} (z)V(z)\Vert  =\Vert V(f^{\tau _n +\rho _n}(z))\Vert $ decays rapidly as $n$ increases. 
In fact, we can show that 
\[
\lim _{n\to \infty} \frac{1}{\tau _n +\rho _n}\log \Vert Df^{\tau _n +\rho _n} (z)V(z)\Vert  = \frac{  \alpha _+ \beta _+ -\alpha _- \beta _-}{\alpha _+ +\beta _+ + \alpha _- + \beta _-} 
 <0,
\]
which is $ \frac{\alpha _+ - \alpha _- }{2}$ when $\alpha _+=\beta _+$ and $\alpha _- =\beta _-$.
On the other hand, $\vartheta (\zeta )$ in  \eqref{eq:0812b1c2} takes the minimum $-\frac{1}{2}$ at $\zeta =\frac{1}{3}$, so the minimum of \eqref{eq:0812b1c2} is
\[
\lim _{d \to \infty} \frac{1}{N(4d) +  \lfloor \frac{2^{4d}n_0}{3}\rfloor} \log \left\Vert Df^{N(4d) + \lfloor \frac{2^{4d}n_0}{3}\rfloor}(z) v \right\Vert  
=
-\frac{1}{2} \log \sigma 
= \frac{  \log \sigma +\log \sigma ^{-2}}{2}.
\]
\end{rem}

\begin{rem}
We emphasize that the choice of $(n_k^0)_{k\in \mathbb N}$ in \eqref{eq:0812bb} is totally  free except the condition $\limsup _{k\to\infty} n_{k+1}^0/n_k ^0 < \infty$, 
while $(n(d))_{d\in \mathbb N}$ in \eqref{eq:0812b1} must satisfy $\lim _{d\to \infty}n(d+1)/n(d) =2$. 
This freedom  makes the construction of  the oscillation of (partial) Lyapunov exponents of $f$ a bit simpler. 
Indeed, in the proof of Theorem \ref{thm:main} we take $(n_k)_{k\in \mathbb N}$ as 
\begin{equation*}
\lim _{p\to \infty} \frac{n_{2p+1}}{n_{2p}} < 
\lim _{p\to \infty}\frac{n_{2p}}{n_{2p-1} }< \infty ,
\end{equation*}
 which enables us to conclude 
   that  for any $z$ in an open subset of $R_{\kappa }$ with some large integer $\kappa$ and any vector $v$ in an open set, 
\begin{equation*}
\begin{split}
\displaystyle
\lim _{p \to \infty} \frac{1}{ N_{2p-1}  } \log \left\Vert Df^{ N_{2p-1} }(z) v \right\Vert  &= \dfrac{\log\lambda+\alpha\log\sigma}{1+\alpha}\\
<\lim _{p \to \infty} \frac{1}{N_{2p}} \log \left\Vert Df^{ N_{2p} }(z) v \right\Vert  &= \dfrac{\log\lambda+\beta\log\sigma}{1+\beta},
\end{split}
\end{equation*}
where $\alpha =
\lim _{p\to \infty}  n_{2p+1}/n_{2p}$, $\beta =
\lim _{p\to \infty} n_{2p}/n_{2p-1} $ and $N_j =(n_\kappa  +2) + (n_{\kappa +1} +2)  +\cdots + (n_{\kappa +j} +2)$
(so the ``time at closest approach'' $N(4d)+\lfloor \zeta 2^{4d}n_0\rfloor$ with $\zeta \in (0,1)$  for   Guarino-Guih\'eneuf-Santiago's  figure-8 attractor is not necessary).
\end{rem}

\begin{rem}
We outline why the open set $V_f$ in Theorem \ref{thm:main} is not easy to be replaced by  $\mathbb R^2\setminus\{0\}$ by our argument.
Again, Guarino-Guih\'eneuf-Santiago's  figure-8 attractor might be useful to understand the situation.
Let $v$ be the unit vertical vector. Then, it follows from \eqref{eq:0812d1} that 
 $\left\Vert Df^{N(2d)}(z) v \right\Vert = \sigma ^{(-2^{2d} +1)n_0}$, which is much smaller than the lower bound $1- \sigma ^{(-2^{2d} +1)n_0}$ of  $\left\Vert Df^{N(2d)}(z) v' \right\Vert $ for any non-zero vector $v'$ being not parallel to $v$, and thus   \eqref{eq:0812b1c} does not hold for this $v$ (see \eqref{eq:0813d} for details).
For the diffeomorphism of Theorem \ref{thm:main},
this special situation on the vertical line 
 may be spread to a vertical cone $\mathcal K_v:=\{ (v_1, v_2) \in \mathbb R^2\mid \vert v_1 \vert \leq K ^{-1} \vert v_2\vert \}$ with a constant $K>1$ (see \eqref{eq:0915c})  and it is hard to repeat the above calculation   on the cone due to  the higher order term $-2\sigma ^{2n_k}x$ of \eqref{eq:0812b}. 
A similar difficulty occurs on a horizontal  cone $\mathcal K_h:=\{ (v_1, v_2) \in \mathbb R^2\mid \vert v_2 \vert \leq  K ^{-1} \vert v_1\vert \}$, and the open set $V_f$  of  Theorem \ref{thm:main} is given as $\mathbb R\setminus (K_v \cup K_h)$.
\end{rem}

\section{Proof of Proposition \ref{prop:0812c}}\label{a:pt}
We follow the argument \cite{OY2008} for the 
 figure-8 attractor,\footnote{They implicitly ignored the higher order terms of 
   the transient map of the flow, i.e.~assumed that $\hat s_n =cr_n$ and $s_{n+1}=\hat c\hat r_n$ instead of  \eqref{eq:0813c} below.}
  so the reader familiar with this subject can skip this section. 
  Let $(f^t)_{t\in \mathbb R}$ be the Bowen flow given in Section \ref{s:12o}. 
Let $N$ and $\hat N$ be neighborhoods of $p$ and $\hat p$, respectively, such that 
there are linearizing coordinates $\phi : N\to \mathbb R^2$ and $\hat \phi : \hat N\to 
 \mathbb R^2$ satisfying that  both $\phi (N )$ and $\hat \phi (\hat N )$ include   $(0,1]^2$ and 
\begin{equation}\label{eq:0724b}
\phi \circ f^t\circ \phi ^{-1}(r,s)=(e^{-\alpha _-t}r, e^{\alpha _+t}s),\quad
\hat \phi \circ  f^t\circ \hat \phi ^{-1}(r,s)=(e^{-\beta _-t}r, e^{\beta _+t}s)
\end{equation}
on $  (0,1]^2$.  
Fix $(x,y)\in U$.
Let $\hat T_0$ be the hitting time of  $(x,y)$ to $\{   \phi ^{-1}(1,s) \mid s\in (0,1]\}$, i.e.~the smallest positive number $t$ such that   $   f^{t}(x,y) = \phi ^{-1}(1,s)$ with some $s\in (0,1]$. 
Let $s_1$ be the second component of $\phi \circ f^{\hat T_0}(x,y)$.
We inductively define sequences  $(t_n, T_n, \hat t_n, \hat T_n)_{n\in \mathbb N}$, $(s_n, r_n, \hat s_n, \hat r_n)_{n\in \mathbb N}$ of positive numbers as
\begin{itemize}
\item $t_n$ is the hitting time of $\phi^{-1}(1,s_n)$ to $\{ \phi ^{-1}(r,1) \mid r\in (0,1]\}$, 
and $r_n$ is the first component of $\phi \circ f^{t_n}\circ \phi ^{-1}(1,s_n) $, 
\item $T_n$  is the hitting time of  $\phi ^{-1}(r_n,1)$ to $\{ \hat \phi ^{-1}(1,  s) \mid  s\in (0,1]\}$, and $\hat s_n$ is the  second component of $\hat \phi \circ f^{T_n}\circ \phi ^{-1}(r_n,1)$, 
\item $\hat t_n$ is the hitting time of $\hat \phi^{-1}(1,\hat s_n)$ to $\{ \hat \phi ^{-1}(r,1) \mid r\in (0,1]\}$, 
and $\hat r_n$ is the first component of $\hat \phi \circ f^{t_n}\circ \hat \phi ^{-1}(1,\hat s_n) $, 
\item $\hat T_n$  is the hitting time of  $\hat \phi ^{-1}(\hat r_n,1)$ to $\{  \phi ^{-1}(1,  s) \mid  s\in (0,1]\}$, and $ s_{n+1}$ is the  second component of $  \phi \circ f^{T_n}\circ \hat \phi ^{-1}(\hat r_n,1)$. 
\end{itemize}
Then, from 
\[
(e^{-\alpha _- t_n}, e^{\alpha _+t_n}s_n) =(r_n,1), \quad (e^{-\beta _- \hat t_n}, e^{\beta  _+\hat t_n}\hat s_n) =(\hat r_n,1)
\]
it follows that
\begin{equation}\label{eq:-724}
t_n=- \frac{\log s_n}{\alpha _+}, \quad r_n = s_n^{a}, \quad \hat t_n=- \frac{\log \hat s_n}{\beta _+}, \quad \hat r_n = \hat s_n^{b }.
\end{equation}
with $a:=\frac{\alpha _- }{\alpha _+}$ and $b:=\frac{\beta _-}{\beta _+}$. 
On the other hand, it is straightforward to see that both $T_n$ and $\hat T_n$ are bounded from above and below uniformly with respect to $n$, 
and thus, since the vector field $V$ is of class $\mathcal C^{1+\alpha }$, 
one can find positive numbers  $c$ and $\hat c$   (which are independent  of $n$) such that
\begin{equation}\label{eq:0813c}
\hat s_n = c r_n +o(r_n^{1+\alpha}), \quad   s_{n+1} = \hat c \hat r_n  +o( \hat r_n^{1+\alpha }).
\end{equation}
Moreover, we set 
\[
\tau _n:= \hat T_0 + \sum _{k=1}^{n-1} (t_k + T_k + \hat t_k + \hat T_k), \quad \hat \tau _n:= \hat T_0 + \sum _{k=1}^{n-1} (t_k + T_k + \hat t_k + \hat T_k) + t_n +T_n,
\]
that is, the $n$-th return time to $N$ and $\hat N$, respectively.
Notice that $D f^t (x,y) V(x,y) = V(f^t (x,y))$ for each $t\geq 0$. 
Hence, we have
\[
\lim _{n\to \infty}  \frac{1}{\tau _n}\log \Vert D f^{\tau _n} (x,y) V(x,y) \Vert =\lim _{n\to \infty}  \frac{1}{\tau _n}\log \Vert   V(1,s_n) \Vert =0
\]
because $ \Vert   V(1,s) \Vert $ is bounded from above and below uniformly with respect to $s\in (0,1]$.

From now on, we identify $\phi (x,y)$ and $\hat \phi (x,y)$ with $(x,y)$ if it makes no confusion.
We further define a sequence $(\rho_n )_{n\in \mathbb N}$  of positive numbers as
$\rho _n$ is the minimizer of 
\[
\Vert f^{t}(1,s_n) - p\Vert ^2= e^{-2\alpha _- t}  + e^{2\alpha _+ t}s_n^2
\]
(under the linearizing  coordinate $\phi$) 
over $0\leq t \leq  t_n$, that is, the time at which $ f^{t}(1,s_n)$ makes the closest approach to $p$ over $0\leq t \leq  t_n$. 
Then, it follows from a straightforward calculation that
\begin{equation}\label{eq:0724c}
\rho _n = - \frac{\log s_n}{\alpha _+ + \alpha _-} +C_1,\quad \Vert L(f^{\rho_n}(1,s_n))\Vert =C_1'
s_n ^{\alpha _-/(\alpha _+ +\alpha _-)} ,
\end{equation}
where $C_1 := \frac{\log \alpha _- - \log \alpha _+}{2(\alpha _+ + \alpha _-)} $, $C_1':=\sqrt{\alpha _-^2e^{-2\alpha _- C_1} + \alpha _+^2 e^{2\alpha _+C_1}}$ and $L$ is the linearized vector sub-field of $V$ around $p$ corresponding to \eqref{eq:0724b}, i.e~$L(x,y) =(-\alpha _-x, \alpha _+y)$.
We  show that 
\begin{equation}\label{eq:0813a2a}
\limsup _{n\to\infty} \frac{1}{\tau _n +\rho _n}\log \Vert D f^{\tau _n +\rho _n} (x,y) V(x,y) \Vert
\leq \frac{   \alpha _+ \beta _+ -\alpha _-\beta _-}{\alpha _+ + \beta _+ + \alpha _- + \beta _-}.
\end{equation}
Fix $\epsilon >0$. Then, it follows from \eqref{eq:0813c} that  one can find $n_0$ such that 
\[
 c_- r_n \leq  \hat s_n \leq c_+  r_n, \quad \hat c_- \hat r_n \leq s_{n+1} \leq  \hat c_+ \hat r_n 
\]
for any $n\geq n_0$, where $c_\pm  =(1\pm\epsilon )c$ and $\hat c_\pm =(1\pm \epsilon )\hat c$.
Therefore, by  induction, together with \eqref{eq:-724}, it is straightforward to see that 
\begin{align*}
&c^{b\Lambda _n}_-  \hat c^{\Lambda _n}_-  s_{n_0 } ^{(ab)^n}\leq s_{n_0 +n} \leq (c^b_+)^{\Lambda _n}  \hat c^{\Lambda _n}_+  s_{n_0 } ^{(ab)^n}, \\
& 
 c_-  \left(c_-^{b\Lambda _{n}}  \hat c^{\Lambda _n}_-  s_{n_0 }^{(ab)^n}\right)^a
\leq \hat s_{n_0+n} \leq 
 c_+ \left(c_+^{b\Lambda _{n}}  \hat c^{\Lambda _n}_+  s_{n_0 }^{(ab)^n}\right)^a
\end{align*}
for any $n\geq 0$,
where $\Lambda _n =1+ab +\cdots +(ab)^{n-1} = \frac{(ab)^n -1}{ab-1}$.
Fix $n\geq n_0$ and write $N:=n_0+n$ to avoid heavy notations. 
Then,  it holds that
\begin{align*}
&
(ab)^n \log \left( s_{n_0} C_- \right) -C_2
  \leq \log s_{N} \leq (ab)^n \log \left(  s_{n_0} C_+\right) 
  +C_2,\\
& a(ab)^n \log \left(  s_{n_0} C_- \right) -C_2
\leq \log \hat s_{N} \leq a(ab)^n \log \left(  s_{n_0}   C_+ \right) 
+C_2
\end{align*}
with some constant $C_2>0$, where $C_\pm :=c_\pm  ^{b/(ab-1)}  \hat c_\pm ^{1/(ab-1)} $.
Thus,  by \eqref{eq:-724} we have
\begin{align*}
\tau_{N} &\geq   \sum _{k=1}^{n-1} \left(-\frac{1}{\alpha _+} - \frac{a}{\beta _+}\right) (ab)^k \log \left( s_{n_0} C_+\right)  +C_{n_0} + n C_3  \\
&=- \frac{\alpha _- + \beta _+}{\alpha _- \beta _- -\alpha _+\beta _+}  (ab)^{n}   \log \left( s_{n_0} C_+\right) +C_{n_0}' + n C_3 \end{align*}
with some constants $C_{n_0}$, $C_{n_0}'$ and $C_3$.
Furthermore, it follows from \eqref{eq:0724c} that 
\[
\rho _{N} \geq - \frac{(ab)^n \log \left(  s_{n_0}   C_+ \right) 
}{\alpha _+ + \alpha _-} +C_3',
\]
so that
\[
\tau_{N}+\rho _N\geq C_{n_0}^{\prime \prime} +nC_3 + \frac{\alpha _-(\alpha _+ + \beta _+ + \alpha _- + \beta _-)}{( \alpha _+ \beta _+ -\alpha _-\beta _-)(\alpha _+ + \alpha _-)}(ab)^n \log \left(  s_{n_0}   C_+ \right) 
\]
with some constants $C_3'$, $C_{n_0}^{\prime \prime}$.
On the other hand, by \eqref{eq:0724c} it holds that
\[
\log \Vert V(f^{\tau _{N} + \rho_{N}}(x,y))\Vert  = \log \Vert L(f^{ \rho_{N}(1,s_{N})})\Vert \leq 
\frac{\alpha _-}{(\alpha _+ +\alpha _-) }  (ab)^n \log \left(  s_{n_0} C_-\right)  + C_3'
\]
with some constants $C_3$, $C_3'$.
Therefore,
\[
\limsup _{n\to\infty} \frac{1}{\tau _n +\rho _n}\log \Vert D f^{\tau _n +\rho _n} (x,y) V(x,y) \Vert\leq \frac{ \alpha _+ \beta _+ -\alpha _-\beta _- }{\alpha _+ + \beta _+ + \alpha _- + \beta _-} \cdot \frac{\log (s_{n_0}C_-)}{\log (s_{n_0}C_+)}.
\]
Since $\epsilon$ is arbitrary, we get \eqref{eq:0813a2a} (notice that $\frac{\log (s_{n_0}C_-)}{\log (s_{n_0}C_+)}$ converges to $1$ from below as $\epsilon$ goes to zero).
In a similar manner, one can show that 
\[
\liminf _{n\to\infty} \frac{1}{\tau _n +\rho _n}\log \Vert D f^{\tau _n +\rho _n} (x,y) V(x,y) \Vert
\geq \frac{ \alpha _+ \beta _+ -\alpha _-\beta _-}{\alpha _+ + \beta _+ + \alpha _- + \beta _-},
\]
and we complete the proof of 
 Proposition \ref{prop:0812c}.
\qed

\section{Proof of Theorem \ref{prop:0811}}\label{s:0811b}
Let $f$ be the  Guarino-Guih\'eneuf-Santiago diffeomorphism of Proposition \ref{prop:GGS} and $N(d)$  the $d$-th return time  given in \eqref{eq:0812b1}.
Fix $z\in S_{n_{0}}$. 
By induction with respect to $d$ 
we first show \eqref{eq:0812d1}. 
It   immediately follows from \eqref{eq:GGSkey} 
that the first equality  of \eqref{eq:0812d1} is true for $d=1$. 
Then let us assume that 
 the first equality  of \eqref{eq:0812d1}  is true for a given positive integer $d$. 
Since 
$N(2(d+1)-1)=N(2d+1)=N(2d-1)+n(2d)+n(2d+1)$, 
by the chain rule and the inductive hypothesis, 
\begin{multline*}
Df^{N(2(d+1)-1)}(z)=Df^{n(2d+1)}(f^{N(2d)}(z)) Df^{n(2d)}(f^{N(2d-1)}(z))Df^{N(2d-1)}(z)\nonumber\\
=
\left(\begin{array}{cc}
		0 & -\sigma^{2^{2d}n_0 }\\ 
      \sigma^{-2^{2d+1}n_{0}} & 0
      \end{array}
\right)\left(\begin{array}{cc}
		0 & -\sigma^{2^{2d-1}n_0 }\\ 
      \sigma^{-2^{2d}n_{0}} & 0
      \end{array}
\right)\\
\times (-1)^{d-1}
\left(\begin{array}{cc}
		0 & -\sigma^{n_0 }\\ 
      \sigma^{-2^{2d-1}n_{0}} & 0
      \end{array}
\right)
\nonumber\\
=(-1)^{d-1}
\left(\begin{array}{cc}
		0 & \sigma^{n_0}\\ 
      -\sigma^{-2^{2d+1}n_{0}} & 0
      \end{array}
\right)
=(-1)^{d}
\left(\begin{array}{cc}
		0 & -\sigma^{n_0}\\ 
      \sigma^{-2^{2d+1}n_{0}} & 0
      \end{array}
\right).
\nonumber
\end{multline*}
That is,  the first equality  of \eqref{eq:0812d1}  holds for $d+1$. 
In a similar manner,  by induction with respect to $d$, we can prove  the  second equality  of \eqref{eq:0812d1}. 

We next prove that $z$ is Lyapunov irregular for 
any nonzero horizontal  vector $v= \left(\begin{array}{c}
		s\\ 
0
      \end{array}
\right)$. By the first equality  of \eqref{eq:0812d1}, we obtain 
\begin{equation}\label{eq:0813d}
\frac{\log \left\Vert Df^{N(2d-1)}(z) v \right\Vert}{N(2d-1)} 
  =
  \frac{ -2^{2d-1}n_{0} \log\sigma+\log |s|}{(2^{2d-1}-1)n_{0}+(2d-1)k_{0}} 
 \xrightarrow[d\to \infty]{} -\log \sigma.
\end{equation}
On the other hand, it follows from the second equality  of \eqref{eq:0812d1}  that  
\[
\frac{\log \left\Vert Df^{N(2d)}(z) v \right\Vert}{N(2d)} 
=\frac{\log |s|}{{(2^{2d}-1)n_{0}+(2d)k_{0}}}
 \xrightarrow[d\to \infty]{} 0.
\]
In a similar manner, we can show that $z$ is Lyapunov irregular for 
any nonzero vertical vector. 

Finally, we will prove \eqref{eq:0812b1c} and  \eqref{eq:0812b1c2}, which immediately implies that   $z$ is Lyapunov irregular for 
any nonzero vector $v\not \in  \mathbb R \left(\begin{array}{c}
		1\\ 
0
      \end{array}
\right) \cup  \mathbb R \left(\begin{array}{c}
		0\\ 
1
      \end{array}
\right)$. 
For simplicity, we assume that $\zeta 2^{4d} n_0$ is an integer. 
Essentially, the proof of  \eqref{eq:0812b1c} is included in the discussion until now.
Thus, we show  \eqref{eq:0812b1c2}. 
By  \eqref{eq:0812d1} and the item (a) of Proposition \ref{prop:GGS}, 
\begin{align*}
Df^{N(4d)+\zeta 2^{ 4d}n_0}(z)&=
\left(\begin{array}{cc}
\sigma^{-2\zeta \cdot 2^{4d}n_0} & 0\\ 
0 & -\sigma^{ -(1-\zeta )2^{4d} n_{0} +n_0}
\end{array}\right)\\
&=\sigma^{ -(1-\zeta )2^{4d} n_{0} }
\left(\begin{array}{cc}
\sigma^{(1-3\zeta ) 2^{4d}n_0} & 0\\ 
0 & -\sigma^{ n_0}
\end{array}\right).
\end{align*}
Fix a  vector $v= \left(\begin{array}{c}
		s\\ 
u
      \end{array}
\right) $ with $su\neq 0$.
If $1-3\zeta \leq 0$, then 
\[
\lim _{d\to \infty}\left\Vert \left(\begin{array}{cc}
\sigma^{(1-3\zeta )  2^{4d}n_0} & 0\\ 
0 & -\sigma^{ n_0}
\end{array}\right) v\right\Vert = 1.
\]
Hence, since $N(4d) +\zeta 2^{4d}n_0 = (1+\zeta )2^{4d}n_0 +(4dk_0 -n_0)$, we get
\[
\lim _{d\to \infty}\frac{1}{N(4d) +\zeta 2^{4d}n_0} \log \left\Vert Df^{N(4d) +\zeta 2^{4d} n_0}(z) v \right\Vert = -\frac{1-\zeta }{1+\zeta } \log \sigma .
\]
On the other hand, if $1-3\zeta > 0$, then 
\[
\lim _{d\to \infty}\left\Vert \left(\begin{array}{cc}
\sigma^{(1-3\zeta )  2^{4d}n_0} & 0\\ 
0 & -\sigma^{ n_0}
\end{array}\right) v\right\Vert \cdot \sigma^{-(1-3\zeta )  2^{4d}n_0} =1.
\]
Thus we get
\begin{align*}
\lim _{d\to \infty}\frac{1}{N(4d) +\zeta 2^{4d}n_0} \log \left\Vert Df^{N(4d) +\zeta 2^{4d} n_0}(z) v \right\Vert &= \frac{-(1-\zeta ) +(1-3\zeta )}{1+\zeta } \log \sigma \\
&=- \frac{2\zeta }{1+\zeta } \log \sigma .
\end{align*}
This completes the proof of Theorem \ref{prop:0811}.
\qed

\section{Proof of Theorem \ref{thm:main}}

In this section, we give the proof of Theorem \ref{thm:main}.
In Section \ref{s:4.1} we briefly recall a small perturbation    of a diffeomorphism with 
 a robust homoclinic tangency introduced by Colli and Vargas \cite{CV2001}.
In Section  \ref{s:4.2} we establish key lemmas to control the higher order term in \eqref{eq:0812a}, and prove the positivity of Lebesgue measure of Lyapunov irregular sets in Section \ref{s:4.3}.
Finally, in Section \ref{s:4.4}, we discuss the Birkhoff (ir)regularity of the set.
\subsection{Dynamics}\label{s:4.1}
Let us start the proof of Theorem \ref{thm:main} by remembering  the Colli-Vargas model with a robust homoclinic tangency introduced  in \cite{CV2001}.
The reader familiar with this subject can skip this section.  
Let $M$ be a closed surface including $[-2,2]^2$, and a diffeomorphism $g\equiv g_\mu: M\to M$ with a real number $\mu$ satisfying the following.
\begin{itemize}
\item (Affine horseshoe) There exist constants $0<\lambda<\frac{1}{2} $ and $\sigma>2$ such that   
\[
g(x, y)=\left(  
\pm \sigma \left(x\pm \frac{1}{2}\right) , \pm \lambda y  \mp \frac{1}{2}\right)\quad \text{if $\displaystyle \left\vert x\pm \frac{1}{2} \right\vert \leq \frac{1}{\sigma}$, $\vert y\vert \leq 1$}
\]
and $\lambda\sigma^2<1$; 
\item (Quadratic tangency) 
For any $(x,y)$ near a small neighborhood of $(0,-1)$,
$$
g ^{2}(x,y)=(\mu  -x^2 -y  ,x).
$$
\end{itemize}
Then, it was proven by Newhouse \cite{Newhouse1970} that there is a $\mu$ such that $g$ has a $\mathcal C^2$-robust homoclinic tangency on $\{y=0\}$. 
See Figure \ref{fig1-1}.

\begin{figure}[hbt]
\centering
\scalebox{0.7}{\includegraphics[clip]{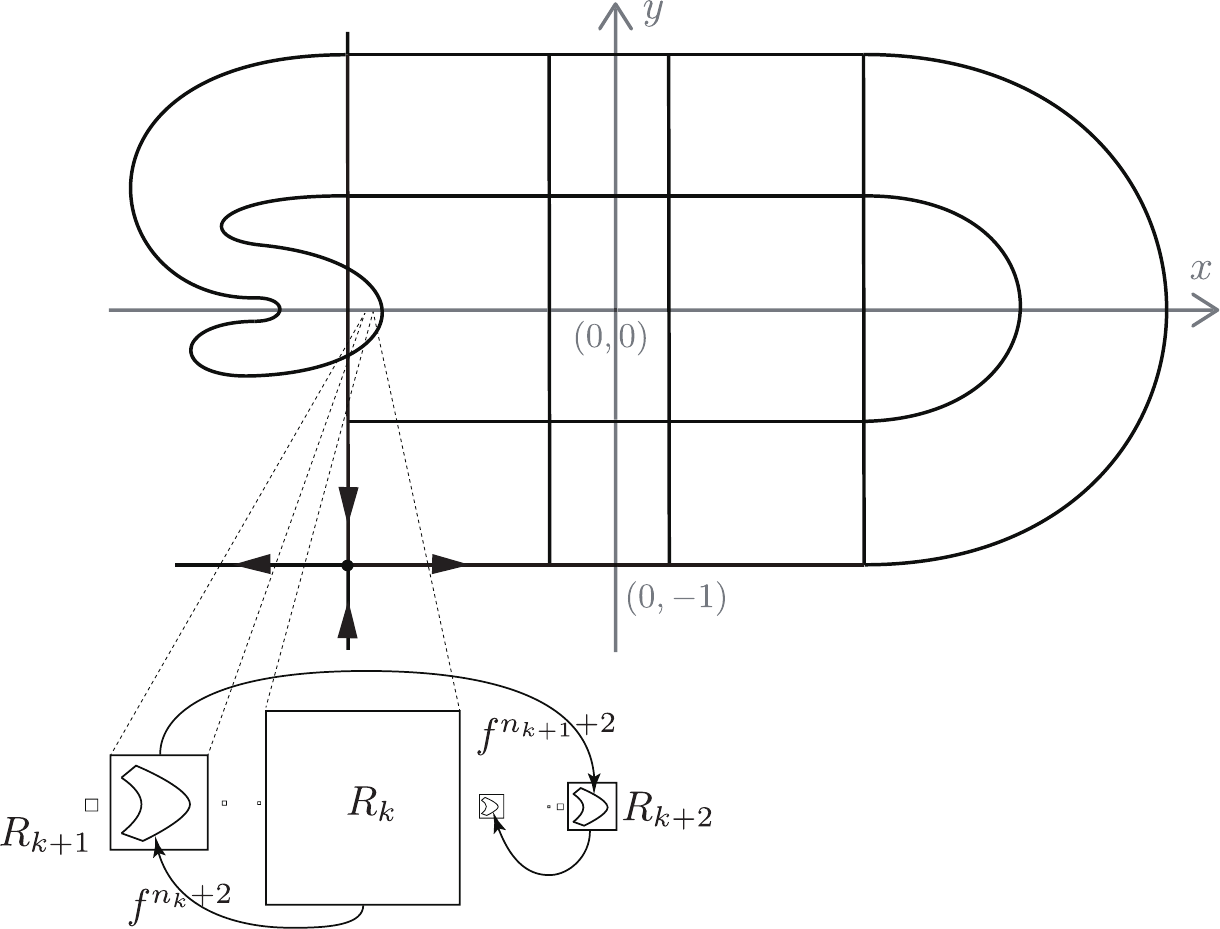}}
\caption{Colli-Vargas' diffeomorphism}
\label{fig1-1}
\end{figure}

Colli and Vargas showed the following.
\begin{thm}[\cite{CV2001}]\label{prop:0812}
Let $g $ be the surface diffeomorphism with a robust homoclinic tangency given above.
Then, for any $\mathcal C^r$-neighborhood $\mathcal O$ of $g$ $(2\leq r<\infty)$ and any increasing sequence $(n_k^0)_{k\in \mathbb N}$ of integers satisfying $n_{k}^0 =O((1+\eta )^k) $ with some  $\eta >0$, 
one can find a   diffeomorphism $f$ in $\mathcal O$ together with a sequence of rectangles $(R_k)_{k\in \mathbb N}$ and  an increasing sequence $(\tilde  n_k)_{k\in \mathbb N}$  of integers, satisfying that $\tilde n_{k} =O(k) $ and depends only on $\mathcal O$, such that the following holds for each $k\in \mathbb N$ with $n_k:= n_k^0 + \tilde n_k$:
\begin{itemize}
\item[$\mathrm{(a)}$]  $f^{n_k+2} (R_k)\subset R_{k+1}$;
\item[$\mathrm{(b)}$] For each $(\tilde x_k+x,y)\in R_k$,  
\begin{equation*}
f^{n_k+2} (\tilde x_k+x,y) = (\tilde x_{k+1}-\sigma^{2n_k}x^2\mp \lambda^{n_k}y, \pm \sigma^{n_k}x),
\end{equation*}
where $(\tilde x_k,0)$ is the center of $R_k$. 
\end{itemize} 
\end{thm}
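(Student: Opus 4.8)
The plan is to reconstruct the inductive perturbation scheme of Colli and Vargas \cite{CV2001}, carrying along the explicit normal form of the return map that the statement demands. First we would fix the hyperbolic data of $g$: the affine horseshoe $\Lambda$ has a fixed point $p$ (say the one in the branch $\lvert x+\tfrac12\rvert\le\tfrac1\sigma$) whose unstable manifold is horizontal and whose stable manifold is vertical, and $W^u(p)$ accumulates on itself near $\{y=0\}$. We would then fix, once and for all, a ``fundamental domain'' $D$ on a compact arc of $W^u(p)$ inside $\{y=0\}$ and a ``tangency box'' $B$ around $(0,-1)$ on which $g^2(x,y)=(\mu-x^2-y,x)$. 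Since the forward orbit of a suitable subrectangle of a neighborhood of $D$ meets $B$ after a uniformly bounded number of iterates, this produces the base box $R_1$, and the whole construction is then an induction replicating this picture at deeper and deeper scales.

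For the inductive step, assume $R_k$ centered at $(\tilde x_k,0)$ has been built. A point $(\tilde x_k+x,y)\in R_k$ will spend $n_k$ iterates in the affine region, so that its $x$-coordinate gets multiplied by $\pm\sigma^{n_k}$ and its $y$-coordinate by $\pm\lambda^{n_k}$ (up to the affine translations of $g$ that deposit the image in a thin strip accumulating on $W^s(p)$ near $B$), and then $2$ iterates through the quadratic tangency. Composing the expansion $x\mapsto\pm\sigma^{n_k}x$ with $x'\mapsto-x'^2$, adding the contracted contribution $\mp\lambda^{n_k}y$ coming from the $-y$ in $g^2$, and recentering by a translation constant $\tilde x_{k+1}$, we would get exactly
\[
f^{n_k+2}(\tilde x_k+x,y)=(\tilde x_{k+1}-\sigma^{2n_k}x^2\mp\lambda^{n_k}y,\ \pm\sigma^{n_k}x);
\]
a small bump perturbation $\phi_k$, supported near the $k$-th passage through $B$ and chosen disjoint from every other support, is what forces the image to land precisely on a rectangle $R_{k+1}$ of the prescribed shape centered at $(\tilde x_{k+1},0)$, yielding items (a) and (b) at once. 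The splitting $n_k=n_k^0+\tilde n_k$ reflects that the number $n_k^0$ of turns around the expanding branch is a free parameter, while $\tilde n_k$ only records the transit iterates between $D$, the affine region and $B$; since the $k$-th box sits essentially one fundamental domain deeper than the previous one, this transit count grows linearly, which is why $\tilde n_k=O(k)$ depends only on $\mathcal O$.

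The hard part, which is the technical heart of \cite{CV2001}, will be to check that the perturbations $\phi_k$ can be taken $C^r$-small and that the accumulated map $f=\cdots\circ\phi_k\circ\cdots$ really converges in $\mathrm{Diff}^r(M)$ to an element of $\mathcal O$. Tracking the box sizes through the normal form above, the horizontal half-width of $R_{k+1}$ is forced to be of order $\sigma^{2n_k}$ times the \emph{square} of that of $R_k$ (plus a $\lambda^{n_k}$-small term), so the widths decay super-exponentially; one then has to verify that this decay outpaces the expansion factors $\sigma^{2n_k}$ at every stage, and this is exactly where the hyperbolicity inequalities $\lambda<\tfrac12$, $\sigma>2$, $\lambda\sigma^2<1$ and the growth bound $n_k^0=O((1+\eta)^k)$ enter. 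Together they make $\lVert\phi_k\rVert_{C^r}$ decay geometrically, so that $\sum_k\lVert\phi_k\rVert_{C^r}<\infty$ and $\lVert f-g\rVert_{C^r}$ is as small as we please; in particular $f\in\mathcal O$ and $f$ still exhibits a robust homoclinic tangency. The remaining assertions — pairwise disjointness of the supports, that $f^{n_k+2}$ genuinely sends $R_k$ into $R_{k+1}$, and the precise signs — are then routine bookkeeping on the normal forms already displayed.
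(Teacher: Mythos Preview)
The paper does not prove this theorem; it is quoted from Colli--Vargas \cite{CV2001}, and immediately after the statement the authors simply write ``Refer to the `Conclusion' given in p.~1674 and the `Rectangle lemma' and its proof given in pp.~1975--1976 of the paper \cite{CV2001}\ldots'' and point to Remark~\ref{rmk:0911c} and Theorem~\ref{thm:0911b} for the finer combinatorics. So there is no in-paper argument to compare your proposal against.

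Your sketch is a reasonable narrative of how the Colli--Vargas construction runs (affine itinerary of length $n_k$ followed by the quadratic passage, bump perturbations with disjoint supports, super-exponential shrinking of boxes, and summability of the $C^r$ norms giving $f\in\mathcal O$), and it is consistent with the role this theorem plays later in the paper. One small caution: your explanation of why $\tilde n_k=O(k)$ (``one fundamental domain deeper than the previous one'') is heuristic; in \cite{CV2001} this comes from the explicit decomposition $\tilde n_k=\hat n_k+\hat m_{k+1}$ recorded here in Theorem~\ref{thm:0911b}, and showing that those transit times grow linearly and depend only on $\mathcal O$ is part of the bookkeeping you would have to fill in. But since the present paper treats the whole statement as a black box, your outline already goes beyond what the paper itself provides.
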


Refer to the ``Conclusion'' given in p.~1674  and the ``Rectangle lemma'' and its proof given in pp.~1975--1976 of  the paper \cite{CV2001}, where the notation $R_k$ was used to denote a slightly different object that we will not use, and our $R_k$  was written as $R_k^*$.
See Remark \ref{rmk:0911c} and Theorem \ref{thm:0911b} for more information.

By the coordinate translation $T_k: (x,y) \mapsto (x-\tilde x_k, y)$, 
which sends $(\tilde x_k,0)$ to $(0,0)$,
 the action of $ f^{n_k+2}\vert _{R_k}$ can be rewritten as 
\begin{equation}\label{map0}
F_k: \left(                 
  \begin{array}{c}   
    x \\  
    y \\  
  \end{array}
\right) \mapsto  \left(                 
  \begin{array}{c}   
    -\sigma^{2n_k}x^2 \mp \lambda^{n_k}y  \\  
 \pm    \sigma^{n_k}x  \\  
  \end{array}
\right), 
\end{equation}
which sends $(0,0)$ to $(0,0)$,
that is, 
\[
f^{n_k+2}(x,y) =T_{k+1}^{-1} \circ F_k  \circ T_k(x,y) \quad \text{for every $(x,y)\in R_k$}.
\]
Note that for each $l\geq k$, 
\[
f^{n_l+2} \circ f^{n_{l-1}+2} \circ \cdots \circ f^{n_k+2} =T_{l+1}^{-1} \circ  \left(F_l\circ F_{l-1}  \circ \cdots   \circ F_k \right) \circ T_k,
\]
so the oscillation of $( \frac{1}{n} \log \Vert Df^n (\boldsymbol{x}) \boldsymbol{v}\Vert )_{n\in \mathbb N}$ for each $\boldsymbol{x}\in R_k$ with some $k$ and each nonzero vectors $\boldsymbol{v}$ in an open set follows from
 the oscillation of 
 \[
\left(  \frac{1}{(n_k +2 ) +  \cdots + (n_{l-1} +2) + (n_l +2)} \log \Vert  D\left(F_l\circ F_{l-1}  \circ \cdots   \circ F_k \right)(\boldsymbol{x}) \boldsymbol{v}\Vert \right) _{l\in \mathbb N}
  \]
  for each $\boldsymbol{x}\in T_k(R_k)$ and  each nonzero vectors $\boldsymbol{v}$ in the open set, 
which we will   show  in the following.

\subsection{Key lemmas}\label{s:4.2}
First, let us fix some constants in advance.
Fix a small neighborhood $\mathcal O$ of $g$, and let $(\tilde n_k) _{k\in \mathbb N}$ be the sequece given in Theorem \ref{prop:0812}.
Notice that $\lambda\sigma<\lambda\sigma^2<1$.
Take a sufficiently small $\eta>0$ and a sufficiently large integer $n_0\geq 2$ so that
\begin{equation*}
\lambda\sigma^{\frac{1+3\eta+8n_0^{-1}}{1-\eta}}<1, 
\end{equation*}
and fix $1<\alpha<\beta<1+\eta$ such that 
\begin{equation}\label{Q02}
\lambda\sigma^{\frac{6\beta-4+8n^{-1}_0}{2-\beta}}<1,  \quad \alpha^2 \beta^2<2 \quad \mbox{and} \quad  \lambda\sigma^{\alpha}<1.
\end{equation}
Let  $(n_k^0)_{k\in \mathbb N}$ be an increasing sequence  of integers given by  
\begin{equation}\label{eq:0915f}
n_{2p}^0=\lfloor n_0\alpha^p\beta^p\rfloor -\tilde n_{2p},\quad   n_{2p+1}^0=\lfloor n_0\alpha^{p+1}\beta^p\rfloor  -\tilde n_{2p+1},
\end{equation}
which are natural numbers for each $p$ by increasing $n_0$ if necessary.
Since $x-1< \lfloor  x\rfloor \le x$ and $\tilde n_k =O(k)$,
 by increasing $n_0$ if necessary, we have
\begin{align*}
\dfrac{n_{2p+1}^0}{n_{2p}^0}&<\dfrac{n_0\alpha^{p+1}\beta^p - \tilde n_{2p+1}}{n_0\alpha^p\beta^p-1- \tilde n_{2p}} < \alpha+\dfrac{\alpha (1+ \tilde n_{2p} )  }{n_0\alpha^p\beta^p-1-\tilde n_{2p}}<1+\eta,\\
\dfrac{n_{2p+2}}{n_{2p+1}}&<\dfrac{n_0\alpha^{p+1}\beta^{p+1}  - \tilde n_{2p+2}}{n_0\alpha^{p+1}\beta^p-1  - \tilde n_{2p+1}}=\beta+\dfrac{\beta (1+   \tilde n_{2p+1})}{n_0\alpha^{p+1}\beta^p- 1  - \tilde n_{2p+1}}<1+\eta ,
\end{align*}
so it holds that $n_k^0 = O((1+\eta ) ^k)$, which is the only requirement  to apply Theorem \ref{prop:0812}.
Set 
$
n_k =  n_k^0 + \tilde n_k,
$ then we obviously have
\begin{equation*}
n_{2p} =\lfloor n_0\alpha^p\beta^p\rfloor ,\quad   n_{2p+1} =\lfloor n_0\alpha^{p+1}\beta^p\rfloor .
\end{equation*}

Define  sequences   $(b_k)_{k\in \mathbb N}$ and $(\varepsilon _k)_{k\in \mathbb N}$ of positive numbers by
\begin{equation*}
b_k=\sigma^{-\sum_{i=-1}^{+\infty}\frac{n_{k+1+i}}{2^i}}
\end{equation*}  
and
\begin{equation*}
\varepsilon_k =\Big(\lambda\sigma^{\frac{6\beta-4+8n^{-1}_k}{2-\beta}}\Big)^{n_k}.
\end{equation*} 
\begin{remark}\label{rmk:0911c}
Define $\tilde b_k$ by 
\[
\tilde b_k=\sigma^{-\sum_{i=0}^{+\infty}\frac{n_{k+1+i}}{2^i}},
\]
then $R_k$ of Theorem \ref{prop:0812} is of the form
\[
R_k =\left[\tilde x_k - c_k \tilde b_k  , \tilde x_k +  c_k \tilde b_k \right] \times \left[-20 \tilde b_k^{\frac{1}{2}}, 20\tilde b_k^{\frac{1}{2}}\right] 
\]
with some constant  $c_k$ satisfying that 
\[
\frac{1}{2} \leq c_k \leq 10,
\]
see the ``Rectangle lemma'' and its proof given in pp.~1975-1976 of \cite{CV2001} (as previously mentioned,  in the paper our $R_k$ is written as $R_k^*$ and the notations $R_k$ is used for another object).
Note that $b_k<\tilde b_k$. Thus,  $F_k$ in \eqref{map0} is well-defined on any rectangle of the form
\[
\left[- c  b_k  ,   c   b_k \right] \times \left[- c  \sqrt{b_k}, c \sqrt{b_k }\right] \quad \text{with $\displaystyle 0< c \leq \frac{1}{2}$}.
\]
In the paper \cite{CV2001}  the notation $b_k$ was used to denote $\tilde b_k$, but  this positive number is not explicitly used in the following argument, so  we defined $b_k$ as above for notational simplicity.
\end{remark}
By the construction of $(n_k) _{k\in \mathbb N}$, we have that $n_l / (n_k +1) < \beta ^{l -k}$ for each $k\leq l$.
Hence, since $n_k$ is increasing,
\begin{align*}
4n_k&<2n_k+n_{k+1}+ \frac{n_{k+2}}{2} + \cdots \\
&< 2(n_k +1) \left(1+ \frac{\beta}{2} + \frac{\beta ^2}{2^2} +\cdots \right) 
=
\dfrac{4(n_k+1)}{2-\beta}.
\end{align*}
Therefore we have
\begin{align}\label{Q05}
\sigma^{-\frac{4(n_k+1)}{2-\beta}} < b_k < \sigma^{-4n_k}\quad \text{for each $k\in\mathbb{N}$}
\end{align}
and
\begin{equation}\label{Q06}
b_{k+1} > \sigma^{-\frac{4(n_{k+1}+1)}{2-\beta}}>\Bigg\{ \begin{array}{ll}
   \sigma^{-\frac{4(\alpha n_k+2)}{2-\beta}} &\quad  \text{if $k$  is even},\\
   \sigma^{-\frac{4(\beta n_k+2)}{2-\beta}} &\quad  \text{if $k$  s odd}. \end{array}
\end{equation}
Furthermore, it follows from \eqref{Q02} that $\varepsilon_k $ can be arbitrary small by taking $k$   sufficiently    large,  so there exists a positive integer $k_0$ such that for any  $k\geq k_0$ and $p\geq 0$, we get
\begin{equation*}
2\alpha^p\beta^p-n_k^{-1}+\dfrac{\log 2}{\log\varepsilon_k}>\alpha^{p+2}\beta^{p+2} . 
\end{equation*} 
Fix such a $k_0$. 
Then it   immediately holds that for any  $k\geq k_0$ and $p\geq 0$, 
\begin{equation}\label{Q121}
\varepsilon_k^{2\alpha^p\beta^p}<\varepsilon_k^{2\alpha^p\beta^p-n_k^{-1}}<\dfrac{1}{2}\varepsilon_k^{\alpha^{p+2}\beta^{p+2}}<\dfrac{1}{2}\varepsilon_k^{\alpha^{p+1}\beta^{p+1}}.
\end{equation}

In the following lemmas, we only consider the case when $k$ is an even number because it is enough to prove Theorem \ref{thm:main} and makes the statements a bit simpler,   but similar estimates   hold even when $k$ is an odd number. 
We first show the following.  
\begin{lem}\label{lem1}
For 
every even number $k\geq k_0$, $p\in \mathbb N \cup \{0\}$ and $j \in \{ 0 , 1\}$,  
\begin{align*}
\lambda^{n_{k+2p+j}}\sqrt{b_{k+2p+j}}\le \varepsilon_k^{\alpha^{p+j}\beta^p-n_k^{-1}}b_{k+2p+1+j}.
\end{align*}
\end{lem}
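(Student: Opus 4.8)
The plan is to reduce the claimed inequality
$\lambda^{n_{k+2p+j}}\sqrt{b_{k+2p+j}}\le \varepsilon_k^{\alpha^{p+j}\beta^p-n_k^{-1}}b_{k+2p+1+j}$
to a comparison of exponents of $\sigma$, using the two-sided bounds \eqref{Q05}--\eqref{Q06} on the $b_m$'s together with the explicit formula $n_{2p}=\lfloor n_0\alpha^p\beta^p\rfloor$, $n_{2p+1}=\lfloor n_0\alpha^{p+1}\beta^p\rfloor$. Write $m=k+2p+j$; since $k$ is even, $m$ has the same parity as $j$, so for $j=0$ the relevant ratio is $n_{m+1}/n_m\approx\alpha$ and for $j=1$ it is $\approx\beta$. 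First I would bound the left-hand side from above: $\lambda^{n_m}\le(\lambda\sigma^{\kappa})^{n_m}\sigma^{-\kappa n_m}$ for the exponent $\kappa=\tfrac{6\beta-4+8n_0^{-1}}{2-\beta}$ appearing in $\varepsilon_k$, and $\sqrt{b_m}<\sigma^{-2n_m}$ by \eqref{Q05}; this gives
$\lambda^{n_m}\sqrt{b_m}<\varepsilon_m^{?}\cdot(\text{power of }\sigma)$, but more usefully I would instead directly take logarithms base $\sigma$ throughout and track everything as a linear inequality in $n_0,n_m,n_{m+1}$, since every quantity in sight is a monomial in $\sigma$ and $\lambda$.

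The key computation is then: $\log_\sigma(\text{LHS}) = n_m\log_\sigma\lambda + \tfrac12\log_\sigma b_m$ and $\log_\sigma(\text{RHS}) = (\alpha^{p+j}\beta^p - n_k^{-1})\log_\sigma\varepsilon_k + \log_\sigma b_{m+1}$, where $\log_\sigma\varepsilon_k = n_k(\log_\sigma\lambda + \kappa)$. Using \eqref{Q05} to bound $\log_\sigma b_m\le -4n_m$ from above and $\log_\sigma b_{m+1}\ge -\tfrac{4(n_{m+1}+1)}{2-\beta}$ from below (via \eqref{Q06}, which already packages the $\alpha$ vs.\ $\beta$ split according to the parity of $m$), it suffices to verify
$n_m\log_\sigma\lambda - 2n_m \;\le\; (\alpha^{p+j}\beta^p - n_k^{-1})\,n_k(\log_\sigma\lambda+\kappa) \;-\; \tfrac{4(n_{m+1}+1)}{2-\beta}.$
Now $n_m = n_{k+2p+j}$ is (up to the floor, i.e.\ up to an additive $1$) equal to $n_k\alpha^{p+j}\beta^p$ when $j\in\{0,1\}$ and $k$ even — this is exactly why the exponent $\alpha^{p+j}\beta^p$ shows up in the statement — and $n_{m+1}\le\alpha n_m$ or $\beta n_m$ according to parity. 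Substituting $n_m\approx n_k\alpha^{p+j}\beta^p$ and dividing through by $n_k\alpha^{p+j}\beta^p$, the inequality collapses to a comparison between $\log_\sigma\lambda - 2$ on the left and $\log_\sigma\lambda + \kappa - \tfrac{4\beta}{2-\beta}$ (roughly) on the right, i.e.\ to $\kappa - \tfrac{4\beta}{2-\beta} \ge -2$, which after clearing denominators is equivalent to $6\beta - 4 + 8n_0^{-1} \ge 4\beta - (4-2\beta)$, i.e.\ $8n_0^{-1}\ge 0$ — true. The definition of $\kappa$ (hence of $\varepsilon_k$) is evidently reverse-engineered precisely so that this works, and the spare $8n_0^{-1}$ term in the exponent is the slack that absorbs the floor corrections and the ``$+1$''s and ``$+2$''s in \eqref{Q05}--\eqref{Q06}.

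The step I expect to be the main obstacle is the bookkeeping of these lower-order corrections: the floors in $n_m=\lfloor n_0\alpha^{p+j}\beta^p\rfloor$ introduce errors of size $O(1)$, the bounds \eqref{Q05} and \eqref{Q06} carry $+1$ and $+2$ shifts, and the $-n_k^{-1}$ correction in the exponent $\alpha^{p+j}\beta^p-n_k^{-1}$ on the right is itself one of the pieces of slack one must keep. Concretely, I would first prove the clean ``continuous'' inequality with all floors and additive constants dropped, isolating the strict gap $8n_0^{-1}\log\sigma\cdot n_k\alpha^{p+j}\beta^p > 0$ (which grows with $p$), then show that for $k\ge k_0$ this gap dominates the total accumulated $O(n_{m+1})\cdot(\text{constant})=O(\alpha^{p+j+1}\beta^p)$-sized error — possibly enlarging $k_0$, exactly as the paper already does repeatedly. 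Inequality \eqref{Q121} is not needed here; it is the monotonicity estimate $\varepsilon_k^{\alpha^{p+j}\beta^p-n_k^{-1}}<\tfrac12\varepsilon_k^{\alpha^{p+1}\beta^{p+1}}$ that will be invoked in the \emph{next} lemmas, so Lemma \ref{lem1} only uses \eqref{Q05}, \eqref{Q06}, $\lambda\sigma^2<1$, and the defining formulas for $n_k$ and $\varepsilon_k$.
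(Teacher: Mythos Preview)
Your approach is correct and essentially the same as the paper's: both bound $\sqrt{b_m}$ above by $\sigma^{-2n_m}$ via \eqref{Q05}, bound $b_{m+1}$ below via \eqref{Q06}, and then reduce to a comparison of exponents using the definition of $\varepsilon_k$ together with the ratio estimate $n_m/n_k\ge \alpha^{p+j}\beta^p - n_k^{-1}$ (from the floor formulas for $n_l$). The paper packages this as a formally trivial induction on $p$ with the four cases $j\in\{0,1\}$ at $p$ and $p+1$ written out explicitly; no enlargement of $k_0$ is needed, because the $-n_k^{-1}$ in the target exponent exactly absorbs the floor error in $n_m/n_k$, while the $8n_k^{-1}$ in the definition of $\varepsilon_k$ (note: $n_k^{-1}$, not $n_0^{-1}$) exactly absorbs the ``$+2$'' in \eqref{Q06}, with the strict inequality $\alpha<\beta$ providing the remaining slack in the even-$m$ case.
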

\begin{proof}
Fix an even number $k\geq k_0$. 
We will prove this lemma by induction with respect to $p$. 
For the case $p=0$,  it follows from \eqref{Q02}, \eqref{Q05} and \eqref{Q06} that
\begin{equation*}
\lambda^{n_k}\sqrt{ b_k}\le \lambda^{n_k}\sigma^{-2n_k}\le \Big(\lambda\sigma^{\frac{6\beta-4+8n_k^{-1}}{2-\beta}}\Big)^{n_k}\sigma^{-\frac{4(\alpha n_k+2)}{2-\beta}}<\varepsilon_k^{1-n_k^{-1}}\cdot b_{k+1},
\end{equation*}
and since $\frac{n_{k+1}}{n_k} \geq \frac{n_0 \alpha ^{k/2 +1}\beta ^{k/2} -1}{n_0 \alpha ^{k/2 }\beta ^{k/2} }\geq \alpha -\frac{1}{n_k}$ by the construction of $n_k$, 
\begin{equation*}
\begin{split}
\lambda^{n_{k+1}}\sqrt {b_{k+1}}
&\le \lambda^{n_{k+1}}\sigma^{-2n_{k+1}}\\
&\le \Big(\lambda\sigma^{\frac{6\beta-4+8n_{k+1}^{-1}}{2-\beta}}\Big)^{n_{k}\cdot\frac{n_{k+1}}{n_k}}\sigma^{-\frac{4(\beta n_{k+1}+2)}{2-\beta}}\le\varepsilon_k^{\alpha-n_k^{-1}}\cdot b_{k+2}.
\end{split}
\end{equation*}

Next we assume that the assertion of Lemma~\ref{lem1} is true  for a given $p\in \mathbb N\cup \{0\}$. 
Then we have
\begin{align*}
\lambda^{n_{k+2p+2}}\sqrt{b_{n+2p+2}}
&\le \lambda^{n_{k+2p+2}}\sigma^{-2n_{k+2p+2}}\\
&\le \Big(\lambda\sigma^{\frac{6\beta-4+8n_{k+2p+2}^{-1}}{2-\beta}}\Big)^{n_{k}\cdot\frac{n_{k+2p+2}}{n_k}}\sigma^{-\frac{4(\alpha n_{k+2p+2}+2)}{2-\beta}}\\
& <\varepsilon_k^{\alpha^{p+1}\beta^{p+1}-n_k^{-1}}\cdot b_{k+2p+3}
\end{align*}
and
\begin{align*}
\lambda^{n_{k+2p+3}}\sqrt{b_{n+2p+3}}
&\le \lambda^{n_{k+2p+3}}\sigma^{-2n_{k+2p+3}}\\
&\le \Big(\lambda\sigma^{\frac{6\beta-4+8n_{k+2p+3}^{-1}}{2-\beta}}\Big)^{n_{k}\cdot\frac{n_{k+2p+3}}{n_k}}\sigma^{-\frac{4(\beta n_{k+2p+3}+2)}{2-\beta}}\\
& <\varepsilon_k^{\alpha^{p+2}\beta^{p+1}-n_k^{-1}}\cdot b_{k+2p+4}.
\end{align*}
That is,   the assertion of Lemma~\ref{lem1}  with $p+1$ instead of $p$  is also true. 
This completes the induction and   the proof of Lemma~\ref{lem1}.
\end{proof}

Define a sequence  $(U_{k,m})_{m  \geq 0}$ of rectangles with $k\geq k_0$ by
\begin{equation}\label{eq:0915e2}
U_{k,m}=\left\{(x,y):\ |x|\leq \varepsilon_k^{(\alpha\beta)^{\lfloor \frac{m}{2} \rfloor }}b_{k+m},\ |y|\leq \varepsilon_k^{(\alpha\beta)^{\lfloor \frac{m}{2}\rfloor }}\sqrt{b_{k+m}}\right\} 
\end{equation}
for each integer $m\geq 0$.
Then, by Remark \ref{rmk:0911c}, $U_{k,m}$ is included in  $R_{k+m}$ for any large $m$ (under the translation of $(\tilde x_{k+m},0)$ to $(0,0)$), on which  $F_{k+m}$ in  \eqref{map0} is well-defined. 
 Then we have the following.
 \begin{lem}\label{lem2}
   For any even number $k\geq k_0$,  $m\in \mathbb N\cup \{0\}$ and $\boldsymbol{x}  \in U_{k,0}$, 
   \[
 F_{k+m-1} \circ F_{k+m-2} \circ \cdots \circ F_{k}(\boldsymbol{x}) 
\in U_{k, m}.
   \]
\end{lem}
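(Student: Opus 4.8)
The plan is to prove Lemma~\ref{lem2} by induction on $m$, the case $m=0$ being nothing but the hypothesis $\boldsymbol x\in U_{k,0}$. For the inductive step I would put $(x',y'):=F_{k+m-1}\circ\cdots\circ F_k(\boldsymbol x)$ and, assuming $(x',y')\in U_{k,m}$, write $m=2p+j$ with $j\in\{0,1\}$ (so $\lfloor m/2\rfloor=p$), whence $|x'|\le\varepsilon_k^{(\alpha\beta)^p}b_{k+m}$ and $|y'|\le\varepsilon_k^{(\alpha\beta)^p}\sqrt{b_{k+m}}$; note that $F_{k+m}$ is well defined on $U_{k,m}$ since, by Remark~\ref{rmk:0911c}, this only needs $\varepsilon_k^{(\alpha\beta)^p}\le\tfrac12$, which holds for $k\ge k_0$ after possibly enlarging $k_0$. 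By \eqref{map0},
\[
F_{k+m}(x',y')=\bigl(-\sigma^{2n_{k+m}}(x')^2\mp\lambda^{n_{k+m}}y',\ \pm\sigma^{n_{k+m}}x'\bigr),
\]
and I must check the two entries against the half-widths $\varepsilon_k^{(\alpha\beta)^{\lfloor(m+1)/2\rfloor}}b_{k+m+1}$ and $\varepsilon_k^{(\alpha\beta)^{\lfloor(m+1)/2\rfloor}}\sqrt{b_{k+m+1}}$ defining $U_{k,m+1}$. The computational backbone is the identity $\sigma^{n_{k+m}}b_{k+m}=\sigma^{-n_{k+m}}\sqrt{b_{k+m+1}}$, immediate from $b_j=\sigma^{-\sum_{i\ge-1}n_{j+1+i}/2^i}$ (multiplying $b_{k+m}$ by $\sigma^{n_{k+m}}$ halves the leading weight $2n_{k+m}$, and the remaining tail is exactly half of the exponent defining $b_{k+m+1}$); it says that the stretching $x'\mapsto\sigma^{n_{k+m}}x'$ is in fact contracting relative to the rescaled coordinates of the next rectangle.

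For the second coordinate, $|\pm\sigma^{n_{k+m}}x'|\le\varepsilon_k^{(\alpha\beta)^p}\sigma^{n_{k+m}}b_{k+m}=\varepsilon_k^{(\alpha\beta)^p}\sigma^{-n_{k+m}}\sqrt{b_{k+m+1}}$, so it is enough to have $\varepsilon_k^{(\alpha\beta)^p}\sigma^{-n_{k+m}}\le\varepsilon_k^{(\alpha\beta)^{\lfloor(m+1)/2\rfloor}}$. When $m$ is even the target exponent is still $(\alpha\beta)^p$ and this is the trivial $\sigma^{-n_{k+m}}\le1$; when $m$ is odd it reads $\sigma^{-n_{k+m}}\le\varepsilon_k^{(\alpha\beta)^p(\alpha\beta-1)}$, and I would deduce it from $n_{k+m}\ge\alpha(\alpha\beta)^pn_k-1$ (a consequence of \eqref{eq:0915f}) together with $\varepsilon_k=(\lambda\sigma^{(6\beta-4+8n_k^{-1})/(2-\beta)})^{n_k}$ having base strictly less than $1$ by \eqref{Q02}: since $\alpha\beta-1=O(\eta)$ while $|\log\varepsilon_k|=O(n_k)$, the inequality holds once $\eta$ is small and $k_0$ is large.

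For the first coordinate I would bound $|{-}\sigma^{2n_{k+m}}(x')^2\mp\lambda^{n_{k+m}}y'|\le\sigma^{2n_{k+m}}(x')^2+\lambda^{n_{k+m}}|y'|=:A+B$. The backbone identity gives $A\le\varepsilon_k^{2(\alpha\beta)^p}(\sigma^{n_{k+m}}b_{k+m})^2=\varepsilon_k^{2(\alpha\beta)^p}\sigma^{-2n_{k+m}}b_{k+m+1}\le\varepsilon_k^{2(\alpha\beta)^p}b_{k+m+1}$, i.e.\ squaring the already small $|x'|$ buys the exponent $2(\alpha\beta)^p$; and Lemma~\ref{lem1}, applied with this $p$ and $j$, gives $B\le\varepsilon_k^{(\alpha\beta)^p}\cdot\varepsilon_k^{\alpha^{p+j}\beta^p-n_k^{-1}}b_{k+m+1}=\varepsilon_k^{(1+\alpha^j)(\alpha\beta)^p-n_k^{-1}}b_{k+m+1}$. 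For $j=0$ the target exponent is $(\alpha\beta)^p$, both $A$ and $B$ are $\le\varepsilon_k^{2(\alpha\beta)^p-n_k^{-1}}b_{k+m+1}$, and \eqref{Q121} yields $A+B\le2\varepsilon_k^{2(\alpha\beta)^p-n_k^{-1}}b_{k+m+1}<\varepsilon_k^{(\alpha\beta)^{p+1}}b_{k+m+1}\le\varepsilon_k^{(\alpha\beta)^p}b_{k+m+1}$. For $j=1$ the target exponent is $(\alpha\beta)^{p+1}$; here \eqref{Q121} bounds $A$ by $\tfrac12\varepsilon_k^{(\alpha\beta)^{p+1}}b_{k+m+1}$, while for $B$ one combines the defining property of $k_0$ stated just before \eqref{Q121} with the elementary facts $\alpha^2\beta^2<2$ (from \eqref{Q02}) and $(\alpha-1)+\alpha\beta(\alpha\beta-1)>0$ to get $(1+\alpha)(\alpha\beta)^p-n_k^{-1}+\log2/\log\varepsilon_k>(\alpha\beta)^{p+1}$, hence $B\le\tfrac12\varepsilon_k^{(\alpha\beta)^{p+1}}b_{k+m+1}$; thus $A+B\le\varepsilon_k^{(\alpha\beta)^{p+1}}b_{k+m+1}$, which closes the induction.

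I expect the main obstacle to be purely one of bookkeeping rather than a conceptual one: the prefactor $\varepsilon_k^{(\alpha\beta)^{\lfloor m/2\rfloor}}$ stays constant over two consecutive values of $m$ and then contracts by a factor $\alpha\beta$, so one application of $F_{k+m}$ must incur no loss of exponent on the ``even'' step yet must supply a full extra factor $\alpha\beta$ on the ``odd'' step. Matching this two-step rhythm against the exponent $\alpha^{p+j}\beta^p$ of Lemma~\ref{lem1} (which has precisely this structure) and against the chain \eqref{Q121}---and, for the second coordinate on odd steps, checking that $\sigma^{-n_{k+m}}$ really absorbs the jump---is exactly where the specific choices of $\eta,\alpha,\beta,n_0,k_0$ made in Section~\ref{s:4.2} are consumed. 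Finally, the analogous statement for odd $k$ follows by the same argument after interchanging the roles of $\alpha$ and $\beta$.
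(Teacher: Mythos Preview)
Your overall plan---induction on $m$, bounding the first coordinate by $A+B$ with $A$ coming from the quadratic term and $B$ controlled by Lemma~\ref{lem1}, and using the identity $\sigma^{2n_{k+m}}b_{k+m}=\sqrt{b_{k+m+1}}$---is exactly what the paper does, and your estimates for the first coordinate are correct in both parities.

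The gap is in the second coordinate when $m=2p+1$ is odd. You need $\varepsilon_k^{(\alpha\beta)^p}\sigma^{-n_{k+2p+1}}\le\varepsilon_k^{(\alpha\beta)^{p+1}}$, equivalently $n_{k+2p+1}\log\sigma\ge(\alpha\beta)^p(\alpha\beta-1)\,n_k\,C_k$ with $C_k:=\bigl|\log\bigl(\lambda\sigma^{(6\beta-4+8n_k^{-1})/(2-\beta)}\bigr)\bigr|$. Since $n_{k+2p+1}\approx\alpha(\alpha\beta)^pn_k$, this boils down to $\alpha\log\sigma\gtrsim(\alpha\beta-1)C_k$. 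That inequality is \emph{not} implied by \eqref{Q02} or by the defining property of $k_0$: the constant $C_k$ depends on $\lambda$ and can be made arbitrarily large (take $\lambda$ very small), so you would need an additional smallness condition on $\eta$ that the paper never states. Your sentence ``the inequality holds once $\eta$ is small and $k_0$ is large'' therefore smuggles in a new hypothesis.

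The paper sidesteps this entirely by carrying a \emph{stronger} inductive hypothesis on the first coordinate at odd times: it proves and then assumes $|x_{k,2p+1}|\le\varepsilon_k^{(\alpha\beta)^{p+1}}b_{k+2p+1}$, one full power of $\alpha\beta$ better than the $U_{k,2p+1}$ requirement. With this in hand the second coordinate is immediate,
\[
|y_{k,2p+2}|\le\sigma^{2n_{k+2p+1}}\varepsilon_k^{(\alpha\beta)^{p+1}}b_{k+2p+1}=\varepsilon_k^{(\alpha\beta)^{p+1}}\sqrt{b_{k+2p+2}},
\]
and no extra condition on $\eta$ is needed. The irony is that you \emph{already prove} this stronger bound in your $j=0$ case (your line $A+B<\varepsilon_k^{(\alpha\beta)^{p+1}}b_{k+m+1}$), but your single-step induction with hypothesis ``$(x',y')\in U_{k,m}$'' discards it before the next step. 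The fix is simply to run the induction in two-step blocks, from $\{2p,2p+1\}$ to $\{2p+2,2p+3\}$, and include $|x_{k,2p+1}|\le\varepsilon_k^{(\alpha\beta)^{p+1}}b_{k+2p+1}$ in the hypothesis, exactly as the paper does.
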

\begin{proof}
Fix an even number $k\geq k_0$, an integer $m\geq 0$ and $\boldsymbol{x}  \in U_{k,0}$,
and 
set 
$
\boldsymbol{x}_{k,m} := F_{k+m-1} \circ F_{k+m-2} \circ \cdots \circ F_{k}(\boldsymbol{x}),
$
where $\boldsymbol{x}_{k,0}$ is interpreted as  $ \boldsymbol{x}$ so that $\boldsymbol{x}_{k,0}\in U_{k,0}$.
Denote the first and second coordinate of $\boldsymbol{x}_{k,m} $ by $x_{k,m}$ and $y_{k,m}$, respectively.  
We will show that $(x_{k,m} ,y_{k,m}) \in U_{k,m}$ by induction with respect to $m\in \mathbb N\cup \{0\}$.

We first show that
 $(x_{k,m},y_{k,m})\in U_{k,m}$ for $m=1$. 
It holds that 
\begin{align*}
|x_{k,1}| 
=|-\sigma^{2n_{k }}x_{k }^2 \mp \lambda^{n_{k}}y_{k }| 
\le \sigma^{4n_{k }} \varepsilon_k ^2b_{k }^2+\lambda^{n_{k }} \varepsilon_k \sqrt{b_{k }} 
\le  \varepsilon_k ^2b_{k +1}+ \varepsilon_k^{2 -n_k^{-1}}b_{k +1}.
\end{align*}
In the last inequality, the first term is due to the equality  
$\sigma^{4n_k}b_k^2= b_{k+1}$ implied by the definition of $b_k$,
and the second term comes from Lemma~\ref{lem1}. 
Hence, it follows from (\ref{Q121}) that
\begin{equation*}
|x_{k,1}|\le\dfrac{1}{2}\varepsilon_k^{\alpha\beta }b_{k+1}+\dfrac{1}{2}\varepsilon_k^{ \alpha\beta } b_{k+1}=\varepsilon_k^{ \alpha\beta }b_{k+1}\le  \varepsilon_k b_{k+1}
\end{equation*}
and
\begin{equation*}
|y_{k,1}|=|\sigma^{n_{k}}x_{k}|\le \sigma^{2n_{k}} \varepsilon_k b_{k}=\varepsilon_k \sqrt{b_{k+1}},
\end{equation*}
which concludes that $(x_{k,1},y_{k,1})\in U_{k+1}$.

Next we assume that   $(x_{k,m},y_{k,m})\in U_{k,m}$ for $m = 2p$ and $2p+1$ with a given integer $p\geq 0$. 
 In addition, we assume (as an inductive hypothesis) that  
\begin{equation*}
|x_{k,2p+1}|\le \varepsilon_k^{(\alpha\beta)^{p+1}}b_{k+2p+1},
\end{equation*}
which indeed holds in the case when $p=0$  as seen above. 
Then it holds that 
\begin{align*}
|x_{k,2p+2}|
&=|-\sigma^{2n_{k+2p+1}}x_{k,2p+1}^2\mp \lambda^{n_{k+2p+1}}y_{k,2p+1}|\nonumber\\
&\le \sigma^{4n_{k+2p+1}}\varepsilon_k^{2(\alpha\beta)^{p}}b_{k+2p+1}^2+\lambda^{n_{k+2p+1}}\varepsilon_k^{(\alpha\beta)^{p}}\sqrt{b_{k+2p+1}}\nonumber\\
&\le \varepsilon_k^{2(\alpha\beta)^{p}}b_{k+2p+2}+\varepsilon_k^{(\alpha\beta)^{p}}\cdot \varepsilon_k^{\alpha^{p+1}\beta^p-n_k^{-1}}b_{k+2p+2}\\
&\le\dfrac{1}{2}\varepsilon_k^{(\alpha\beta)^{p+1}}b_{k+2p+2}+\dfrac{1}{2}\varepsilon_k^{(\alpha\beta)^{p+1}} b_{k+2p+2}=\varepsilon_k^{(\alpha\beta)^{p+1}}b_{k+2p+2},\\
|y_{k,2p+2}|
&=|\sigma^{n_{k+2p+1}}x_{k,2p+1}|\\
&\le \sigma^{2n_{k+2p+1}}\varepsilon_k^{(\alpha\beta)^{p+1}}b_{k+2p+1}=\varepsilon_k^{(\alpha\beta)^{p+1}}\sqrt{b_{k+2p+2}},\\
|x_{k,2p+3}|
&=|-\sigma^{2n_{k+2p+2}}x_{k,2p+2}^2 \mp \lambda^{n_{k+2p+2}}y_{k,2p+2}|\nonumber\\
&\le \sigma^{4n_{k+2p+2}}\varepsilon_k^{2(\alpha\beta)^{p+1}}b_{k+2p+2}^2+\lambda^{n_{k+2p+2}}\varepsilon_k^{(\alpha\beta)^{p+1}}\sqrt{b_{k+2p+2}}\nonumber\\
&\le \varepsilon_k^{2(\alpha\beta)^{p+1}}b_{k+2p+3}+\varepsilon_k^{(\alpha\beta)^{p+1}}\cdot \varepsilon_k^{(\alpha\beta)^{p+1}-n_k^{-1}}b_{k+2p+3}\\
&\le\dfrac{1}{2}\varepsilon_k^{(\alpha\beta)^{p+3}}b_{k+2p+3}+\dfrac{1}{2}\varepsilon_k^{(\alpha\beta)^{p+3}} b_{k+2p+3}\\
&
\le \varepsilon_k^{(\alpha\beta)^{p+2}}b_{k+2p+3}
\le \varepsilon_k^{(\alpha\beta)^{p+1}}b_{k+2p+3},\\
|y_{k,2p+3}|&=|\sigma^{n_{k+2p+2}}x_{k,2p+2}|\\
&\le \sigma^{2n_{k+2p+2}}\varepsilon_k^{(\alpha\beta)^{p+1}}b_{k+2p+2}=\varepsilon_k^{(\alpha\beta)^{p+1}}\sqrt{b_{k+2p+3}}.
\end{align*}
This shows that $(x_{k,m},y_{k,m})\in U_{k,m}$ for  $m=2p+2$ and $2p+3$, which complete the proof of Lemma~\ref{lem2}.
\end{proof}

Since  $0<\lambda\sigma^{\frac{6\beta-4+8n_k^{-1}}{2-\beta}}<1$ for any $k\geq 0$ by \eqref{Q02}, there exists a positive integer $m'$ such that
\begin{equation*}
\log\lambda\sigma^\alpha>(\alpha\beta)^{\frac{m'}{2}}\log\Big(\lambda\sigma^{\frac{6\beta-4+8n_k^{-1}}{2-\beta}}\Big)
\end{equation*}
for any $k\geq 0$. 
Fix such an $m'$.
Fix also a real number $\xi\in(0,1)$.

\begin{lem}\label{lem4}
There exist positive integers  $k_1\geq k_0$ and  $m_0$ such that for any even number $k\geq k_1$, any integer $m\ge m_0$ and any $\boldsymbol{x} \in U_{k,0}$, 
\[
2|x_{k,m}|\sigma ^{2n_{k+m}}\le \xi\lambda^{n_{k+m}},
\]
where $x_{k,m}$ is  the first coordinate of $ F_{k+m-1} \circ F_{k+m-2} \circ \cdots \circ F_{k}(\boldsymbol{x} )$. 
\end{lem}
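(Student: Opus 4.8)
The plan is to replace the rectangle estimate of Lemma~\ref{lem2} by a genuinely sharper bound on the first coordinate $x_{k,m}$ of $\boldsymbol{x}_{k,m}:=F_{k+m-1}\circ\cdots\circ F_k(\boldsymbol{x})$, and then to compare exponents. Writing $\varepsilon_k=(\lambda\sigma^{\gamma_k})^{n_k}$ with $\gamma_k:=(6\beta-4+8n_k^{-1})/(2-\beta)>2$, the containment $\boldsymbol{x}_{k,m}\in U_{k,m}$ only gives $|x_{k,m}|\le\varepsilon_k^{(\alpha\beta)^{\lfloor m/2\rfloor}}b_{k+m}$; since $\sigma^{2n_{k+m}}b_{k+m}<\sigma^{-2n_{k+m}}$ by \eqref{Q05}, this bound would force $n_k(\alpha\beta)^{\lfloor m/2\rfloor}\ge\tfrac{|\log(\lambda\sigma^{2})|}{|\log(\lambda\sigma^{\gamma_k})|}\,n_{k+m}$, which is impossible because $n_k(\alpha\beta)^{\lfloor m/2\rfloor}$ is only comparable to $n_{k+m}$ while the ratio on the right exceeds $1$ (as $\gamma_k>2$). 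So the whole point is that the quadratic term $-\sigma^{2n_{k+m}}x_{k,m}^2$ in the first coordinate of $F_{k+m}$ contracts the $x$-direction far faster than Lemma~\ref{lem2}'s self-consistent rectangle detects. Setting $\delta_{k,m}:=|x_{k,m}|/b_{k+m}$ and using the identities $\sigma^{4n_j}b_j^2=b_{j+1}$ and $\sqrt{b_{j+1}}=\sigma^{2n_j}b_j$ (both immediate from the definition of $b_j$), the relation $|y_{k,m}|=\sigma^{n_{k+m-1}}|x_{k,m-1}|$ coming from \eqref{map0}, and Lemma~\ref{lem1}, I would derive the recursion
\[
\delta_{k,m+1}\ \le\ \delta_{k,m}^2\,\sigma^{-2n_{k+m}}\ +\ \delta_{k,m-1}\,\varepsilon_k^{e_m},
\]
where $e_m$ is the Lemma~\ref{lem1}-exponent at index $k+m$, i.e.\ $e_m=(\alpha\beta)^{m/2}-n_k^{-1}$ for $m$ even and $e_m=\alpha(\alpha\beta)^{(m-1)/2}-n_k^{-1}$ for $m$ odd.

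Next I would show that the first term on the right is negligible against the second (this uses $\alpha^2\beta^2<2$, $\lambda\sigma^\alpha<1$ from \eqref{Q02}, the estimate \eqref{Q121}, and the constant $m'$, the last precisely so that once $m\ge m'$ the per-step factor $\varepsilon_k^{e_m}$ is dominated by a clean power of $\lambda\sigma^{\alpha}$). The recursion then linearises to $\delta_{k,m}\le\varepsilon_k^{T_m}$, where summing the $e_j$ along the two-step chain gives
\[
T_m=\bigl(1+o(1)\bigr)\cdot\begin{cases}\ \alpha\,\dfrac{(\alpha\beta)^{m/2}-1}{\alpha\beta-1}+1,& m\ \text{even},\\[3mm] \ \alpha\beta\,\dfrac{(\alpha\beta)^{(m-1)/2}-1}{\alpha\beta-1}+2,& m\ \text{odd}.\end{cases}
\]
The decisive feature is the large factor $\tfrac{1}{\alpha\beta-1}$. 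Combining this with $n_{k+m}=(1+o_k(1))\,n_k(\alpha\beta)^{\lfloor m/2\rfloor}$ for $m$ even and $n_{k+m}=\tfrac1\alpha(1+o_k(1))\,n_k(\alpha\beta)^{\lfloor m/2\rfloor}$ for $m$ odd — which follows from the geometric growth of $(n_k)$ exactly as in the estimates preceding \eqref{Q05}, with $o_k(1)\to0$ as $k\to\infty$ uniformly in $m$ — one gets
\[
\theta_m:=\frac{n_kT_m}{n_{k+m}}\ \xrightarrow[\ m\to\infty\ ]{}\ \theta\in\Bigl\{\tfrac{\alpha}{\alpha\beta-1},\ \tfrac{\beta}{\alpha\beta-1}\Bigr\},
\]
uniformly in $k\ge k_0$, and $\theta$ is as large as we like because $\eta$ (hence $\alpha\beta-1$) was chosen small in the setup.

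To finish, since $\sigma^{2n_{k+m}}b_{k+m}=\sigma^{-(n_{k+m+1}+n_{k+m+2}/2+\cdots)}<1$ we have $2|x_{k,m}|\sigma^{2n_{k+m}}\le 2\varepsilon_k^{T_m}=2(\lambda\sigma^{\gamma_k})^{n_kT_m}$, so the target $2(\lambda\sigma^{\gamma_k})^{n_kT_m}\le\xi\lambda^{n_{k+m}}$, after dividing by $\lambda^{n_{k+m}}$ and taking logarithms, is equivalent to
\[
\theta_m\bigl(1-\gamma_k\tfrac{\log\sigma}{|\log\lambda|}\bigr)\ \ge\ 1\ +\ \frac{|\log(\xi/2)|}{n_{k+m}\,|\log(\lambda\sigma^{\gamma_k})|}.
\]
The factor $1-\gamma_k\tfrac{\log\sigma}{|\log\lambda|}$ is positive and bounded below uniformly in $k\ge k_0$ (because $\gamma_k\le\gamma_{k_0}$ and $\lambda\sigma^{\gamma_{k_0}}<1$), while $\theta_m\to\theta$ with $\theta$ large; hence for $\eta$ small the left side exceeds $1$ by a definite amount. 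Choosing $k_1\ge k_0$ and $m_0$ large then makes the displayed inequality hold for \emph{every} even $k\ge k_1$ and $m\ge m_0$, using $n_{k+m}\ge n_{m_0}\to\infty$ together with the uniformity in $k$ of the $o(1)$'s. The hard part is the first step: Lemma~\ref{lem2}'s rectangle, though self-consistent, does not see the quadratic contraction at all, so one must run the finer induction above and keep careful, $k$-uniform control both of the accumulated Lemma~\ref{lem1}-exponents $T_m$ and of the ratio $n_kT_m/n_{k+m}$.
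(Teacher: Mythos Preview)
Your opening diagnosis contains the central error. You argue that the rectangle bound $|x_{k,m}|\le\varepsilon_k^{(\alpha\beta)^{\lfloor m/2\rfloor}}b_{k+m}$ from Lemma~\ref{lem2} is insufficient because $\sigma^{2n_{k+m}}b_{k+m}<\sigma^{-2n_{k+m}}$ by \eqref{Q05}. But the paper does \emph{not} use this crude estimate: it uses the exact identity $\sigma^{2n_{k+m}}b_{k+m}=\sqrt{b_{k+m+1}}$ (which you yourself invoke later) together with $\sqrt{b_{k+m+1}}<\sigma^{-2n_{k+m+1}}$, and then pairs the resulting factor $\sigma^{-n_{k+m+1}}$ with $\lambda^{n_{k+m}}$ to produce $(\lambda\sigma^{\alpha})^{n_k(\alpha\beta)^{\lceil m/2\rceil}}$ up to bounded constants. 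The whole purpose of the constant $m'$ fixed before Lemma~\ref{lem4}---namely the inequality $\log(\lambda\sigma^{\alpha})>(\alpha\beta)^{m'/2}\log(\lambda\sigma^{\gamma_k})$---is precisely to bridge the gap between $\varepsilon_k^{(\alpha\beta)^{\lfloor m/2\rfloor}}$ and $(\lambda\sigma^{\alpha})^{n_{k_0}(\alpha\beta)^{\lceil m/2\rceil}}$, once $k_1$ is chosen so that $\varepsilon_k\le\varepsilon_{k_0}^{(\alpha\beta)^{(m'+2)/2}}$. So the paper's route is: Lemma~\ref{lem2} $+$ the identity for $b_{k+m}$ $+$ the $m'$-shift $+$ the choice of $k_1$; no finer recursion on $\delta_{k,m}$ is run.

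Your alternative route has a genuine gap at the end. You need $\theta_m\bigl(1-\gamma_k\log\sigma/|\log\lambda|\bigr)\ge 1+o(1)$, and you justify this by saying that $\theta\in\{\alpha/(\alpha\beta-1),\beta/(\alpha\beta-1)\}$ ``is as large as we like because $\eta$ was chosen small in the setup.'' But $\eta$, $\alpha$, $\beta$ are fixed once and for all at the beginning of Section~\ref{s:4.2}, subject only to \eqref{Q02}; Lemma~\ref{lem4} is stated for \emph{those} constants, and you are not free to shrink $\eta$ further at this point. Concretely, you would need $\alpha/(\alpha\beta-1)>|\log\lambda|/|\log(\lambda\sigma^{\gamma_{k_0}})|$, and nothing in \eqref{Q02} forces this. (By contrast, the paper's use of $m'$ absorbs exactly this ratio without touching $\eta$.) A secondary issue is that in your final bound you discard the factor $\sqrt{b_{k+m+1}}$ entirely, writing $2|x_{k,m}|\sigma^{2n_{k+m}}\le 2\varepsilon_k^{T_m}$; this is what forces the large-$\theta$ requirement, and it is the same over-crude move you criticised in the first paragraph.
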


\begin{proof}
Since $\varepsilon _k$ goes to zero as $k\to \infty$, 
there exists an even number $k_1\geq k_0$
such that 
\[
\varepsilon _k \leq \varepsilon_{k_0}^{(\alpha\beta)^{\frac{m'+2}{2}}} 
\]
for any $k\geq k_1$.
Recall  that $k_0$ is   an even number. 
Note that
\[
n_k^{-1}(\alpha\beta)^{-\frac{m+1}{2}}(\log (2\lambda^{-1}\sigma)-\log\xi) \to 0 \quad \text{as $m\to \infty$},
\]
so by the choice of $m'$, 
there exists  an $m_0\in\mathbb{N}$ such that for every $m\ge m_0$,
\begin{equation*}
\log\lambda\sigma^\alpha\ge(\alpha\beta)^{\frac{m'}{2}}\log\Big(\lambda\sigma^{\frac{6\beta-4+8n_{k_0}^{-1}}{2-\beta}}\Big)+n_{k_0}^{-1}(\alpha\beta)^{-\frac{m+1}{2}}(\log (2\lambda^{-1}\sigma)-\log\xi).
\end{equation*}
Multiply the inequality 
  by $(\alpha\beta)^{\frac{m+1}{2}}$, then we get
\[
(\alpha\beta)^{\frac{m+1}{2}}\log\lambda\sigma^\alpha+n_{k_0}^{-1}\log\xi 
\ge (\alpha\beta)^{\frac{m'+m+1}{2}}\log\Big(\lambda\sigma^{\frac{6\beta-4+8n_{k_0}^{-1}}{2-\beta}}\Big)+n_{k_0}^{-1}\log (2\lambda^{-1}\sigma).
\]
Hence, it follows that 
\[
\xi^{n_{k_0}^{-1}}(\lambda\sigma^\alpha)^{(\alpha\beta)^{\lceil \frac{m}{2}\rceil}}\ge
\xi^{n_{k_0}^{-1}}(\lambda\sigma^\alpha)^{(\alpha\beta)^{\frac{m+1}{2}}}\ge (2\lambda^{-1}\sigma)^{n_{k_0}^{-1}}\Big(\lambda\sigma^{\frac{6\beta-4+8n_{k_0}^{-1}}{2-\beta}}\Big)^{(\alpha\beta)^{\frac{m'+m+1}{2}}}
\]
because $\frac{m+1}{2}\ge\lceil\frac{m}{2}\rceil$, 
where $\lceil x\rceil$ denotes the smallest integer which is larger than or equal to $x$.
Raise the above inequality  to   the   $n_{k_0}$-th power,  together with \eqref{Q02}, then we have
\begin{align*}
\lambda\sigma^{-1}\xi(\lambda\sigma^\alpha)^{n_{k_0}(\alpha\beta)^{\lceil \frac{m}{2}\rceil}}
&\ge
2\Big( \lambda\sigma^{\frac{6\beta-4+8n_{k_0}^{-1}}{2-\beta}} \Big)^{n_{k_0}(\alpha\beta)^{\frac{m'+m+1}{2}}}
= 2\big(\varepsilon_{k_0}^{(\alpha\beta)^{\frac{m'+2}{2}}}\big)^{(\alpha\beta)^{\frac{m-1}{2}}} \\
&\ge 2\big(\varepsilon_{k_0}^{(\alpha\beta)^{\frac{m'+2}{2}}}\big) ^{(\alpha\beta)^{\lfloor\frac{m}{2}\rfloor}}
\ge 2 \varepsilon_k   ^{(\alpha\beta)^{\lfloor\frac{m}{2}\rfloor}}
\end{align*}
for any $k\geq k_1$.

Fix an even number $k\geq k_1$ and an integer $m\geq m_0$. 
Then, due to Lemma~\ref{lem2}, 
 the definition of $b_{k+m}$, (\ref{Q05}) and the above inequality, we have
\begin{align}\label{eq:0914c}
\notag2|x_{k,m}|\sigma^{2n_{k+m}} &\le 2\varepsilon_ k^{(\alpha\beta)^{\lfloor\frac{m}{2}\rfloor}}b_{k+m}\sigma^{2n_{k+m}}\\
\notag &=2\varepsilon_k ^{(\alpha\beta)^{\lfloor\frac{m}{2}\rfloor}}\sqrt{b_{k+m+1}}\\
& \le  
\notag 2\varepsilon_k^{(\alpha\beta)^{\lfloor\frac{m}{2}\rfloor}}\sigma^{-2n_{k+m+1}} \le 
\notag 2\varepsilon_k^{(\alpha\beta)^{\lfloor\frac{m}{2}\rfloor}}\sigma^{-n_{k+m+1}}\\
&\le \lambda\sigma^{-1}\xi(\lambda\sigma^\alpha)^{n_k(\alpha\beta)^{\lceil \frac{m}{2}\rceil}}\sigma^{-n_{k+m+1}}.
\end{align}

On the other hand, when $m=2p$,  
\[
\lambda^{n_{k+m}}>\lambda^{n_k\alpha^p\beta^p+1} \quad \mbox{and}\quad \sigma^{n_{k+m+1}}>\sigma^{n_k\alpha^{p+1}\beta^p-1},
\] 
thus
\[
\lambda^{n_{k+m}}\sigma^{n_{k+m+1}}>\lambda\sigma^{-1}(\lambda\sigma^\alpha)^{n_k\alpha^p\beta^p}.
\]
Similarly, when $m=2p+1$,  
\[
\lambda^{n_{k+m}}>\lambda^{n_k\alpha^{p+1}\beta^p+1} \ \mbox{and}\ \sigma^{n_{k+m+1}}>\sigma^{n_k\alpha^{p+1}\beta^{p+1}-1}>\sigma^{n_k\alpha^{p+2}\beta^p-1},
\] 
thus
\[
\lambda^{n_{k+m}}\sigma^{n_{k+m+1}}>\lambda\sigma^{-1}(\lambda\sigma^\alpha)^{n_k\alpha^{p+1}\beta^p}>\lambda\sigma^{-1}(\lambda\sigma^\alpha)^{n_k\alpha^{p+1}\beta^{p+1}}.
\]
Therefore, we have
\begin{equation*}
\lambda^{n_{k+m}}=(\lambda^{n_{k+m}}\sigma^{n_{k+m+1}})\sigma^{-n_{k+m+1}}
\ge \lambda\sigma^{-1}(\lambda\sigma^\alpha)^{n_k(\alpha\beta)^{\lceil m/2\rceil}} \sigma^{-n_{k+m+1}}.
\end{equation*}
Combining this estimate with \eqref{eq:0914c}, we get
\[
2|x_{k,m}|\sigma ^{2n_{k+m}}\le \xi\lambda^{n_{k+m}},
\]
which completes the proof of Lemma~\ref{lem4}.
\end{proof}

\subsection{Lyapunov irregularity}\label{s:4.3}

Let $k_1$ and $m_0$ be integers given in the previous subsection, 
and we fix even numbers $k\geq k_1$ and $m\geq m_0$ throughout this  subsection.  

Fix $\boldsymbol{x}\in U_{k,0}$ and define $\boldsymbol{x}_{k, m+j} = (x_{k,m+j},y_{k,m+j})$ for each $j\geq 0$  by
\[
\boldsymbol{x}_{k,m+j}  := F_{k+m +j  -1} \circ  F_{k+m +j -2} \circ \cdots \circ  F_{k}(\boldsymbol{x}) .
\]
Recall Lemma \ref{lem4} for $\xi \in (0,1)$, and set 
\begin{equation*}
K :=\dfrac{1}{3\xi   }.
\end{equation*}
Fix also a vector $\boldsymbol{v}_0=(v_0,w_0) \in T_{\boldsymbol{x}_{k,m} }M$ with 
\begin{equation}\label{eq:0915c}
K^{-1} \leq \frac{|v_0|}{|w_0|}\leq K,
\end{equation}
 and inductively define $\boldsymbol{v}_{j}=(v_{j},w_{j}) $ for each $j\geq 0$ by
\[
\boldsymbol{v}_{j+1}  := DF_{k+m +j  }(\boldsymbol{x}_{k, m+j}) \boldsymbol{v}_{j}.
\]
For notational simplicity,  we below use
\[
 \kappa := k +m
 \]
  and  
\[
(n_p;n_{p+2q}):=n_p+n_{p+2}+n_{p+4}+\cdots +n_{p+2q}
\]
  for each $p, q\in\mathbb{N}$.
  For simplicity, we let $(n_p;n_{p-2})=0$ for $p\in \mathbb N$.
\begin{lem}\label{lem5}
There exist constants $C_j $ ($j =-2, -1, \ldots $)   such that 
\begin{align*}
\boldsymbol{v}_{2p}&=\left(                 
  \begin{array}{c}   
    v_{2p} \\  
    w_{2p} \\  
  \end{array}
\right)=\left(                 
  \begin{array}{c}   
    C_{2p-1}\lambda^{(n_{\kappa +1};n_{\kappa +2p-1})}\sigma^{(n_{\kappa };n_{\kappa +2p-2})}v_0 \\  
   \pm  C_{2p-2}\lambda^{(n_{\kappa };n_{\kappa +2p-2})}\sigma^{(n_{\kappa +1};n_{\kappa +2p-1})}w_0\\  
  \end{array}
\right)\\
\boldsymbol{v}_{2p+1}&=\left(                 
  \begin{array}{c}   
    v_{2p+1} \\  
    w_{2p+1} \\  
  \end{array}
\right)=\left(                 
  \begin{array}{c}   
   C_{2p}\lambda^{(n_{\kappa };n_{\kappa +2p})}\sigma^{(n_{\kappa +1};n_{\kappa +2p-1})}w_0 \\  
  \pm  C_{2p-1}\lambda^{(n_{\kappa +1};n_{\kappa +2p-1})}\sigma^{(n_{\kappa };n_{\kappa +2p})}v_0\\  
  \end{array}
\right)
\end{align*}
 for every $p\geq 0$, 
and that $\dfrac{1}{2}\leq |C_j| \leq \dfrac{3}{2}$ for every $j\geq  -2$.
\end{lem}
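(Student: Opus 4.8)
The plan is to establish the two displayed identities together with the bound $\tfrac12\le |C_j|\le\tfrac32$ by a single induction on $p$, carrying the exponent bookkeeping and the control of the constants along at the same time. First I would record the one–step recursion. Differentiating \eqref{map0} (which, after the translation $T_k$, is exactly the matrix \eqref{eq:0812b}), the Jacobian of $F_{k+m+j}$ at $\boldsymbol x_{k,m+j}=(x_{k,m+j},y_{k,m+j})$ has first row $(-2\sigma^{2n_{k+m+j}}x_{k,m+j},\ \mp\lambda^{n_{k+m+j}})$ and second row $(\pm\sigma^{n_{k+m+j}},\ 0)$. Hence, writing $\boldsymbol v_j=(v_j,w_j)$, one has the \emph{exact} relations $w_{j+1}=\pm\sigma^{n_{k+m+j}}v_j$ and $v_{j+1}=-2\sigma^{2n_{k+m+j}}x_{k,m+j}\,v_j\mp\lambda^{n_{k+m+j}}w_j$. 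The first relation already reproduces the second component of $\boldsymbol v_{j+1}$ in the asserted form and simply \emph{reuses} the constant standing in front of $v_j$, so no new constant is created there. In the second relation one factors out $\lambda^{n_{k+m+j}}$ times the coefficient appearing in front of the relevant component of $\boldsymbol v_0$; this is precisely where the new constant $C_j$ is created, in the shape $C_j=\pm(\text{an earlier constant})\cdot(1+\delta_j)$, where $\delta_j$ collects the contribution of the quadratic term $-2\sigma^{2n_{k+m+j}}x_{k,m+j}v_j$.

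Next, the exponent bookkeeping. The base case $p=0$ forces $C_{-1}=\pm1$ and $C_{-2}=\pm1$, hence $|C_{-1}|=|C_{-2}|=1$, using the convention $(n_p;n_{p-2})=0$ so that the powers of $\lambda$ and $\sigma$ occurring in $\boldsymbol v_0$ are trivial. The inductive step then reduces to the telescoping identities $(n_{\kappa+1};n_{\kappa+2p-3})+n_{\kappa+2p-1}=(n_{\kappa+1};n_{\kappa+2p-1})$ and $(n_{\kappa};n_{\kappa+2p-2})+n_{\kappa+2p}=(n_{\kappa};n_{\kappa+2p})$, which are immediate from the definition of the symbol $(n_\cdot;n_\cdot)$ and which guarantee that both the $w$-update and the $\lambda$-factored $v$-update land exactly on the powers claimed in the lemma.

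The heart of the argument is the estimate that $|\delta_j|$ is small. Writing $\delta_j$ out, it is a product of four factors: the quantity $2\sigma^{2n_{k+m+j}}x_{k,m+j}/\lambda^{n_{k+m+j}}$, which is at most $\xi$ by Lemma~\ref{lem4}; a ratio of two previously constructed constants, which is at most $3$ by the inductive hypothesis $\tfrac12\le|C_i|\le\tfrac32$; the ratio $|v_0/w_0|$ or $|w_0/v_0|$, which is at most $K=1/(3\xi)$ by \eqref{eq:0915c}; and a ratio of powers of $\lambda$ and of $\sigma$ whose exponents, because the sequence $(n_k)$ is strictly increasing and grows geometrically, are bounded below in absolute value by a positive multiple of $n_\kappa$ (by $n_\kappa$ in the odd step and by $n_{\kappa+1}-n_\kappa$ in the even step, and these bounds moreover grow with $p$). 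Since $3\xi K=1$, every such correction is therefore at most $\lambda^{cn_\kappa+p}$ for some $c>0$, with the sole exception of the correction produced when $C_0$ is created, where this last $\lambda/\sigma$-factor is absent and one gets merely $|\delta_0|\le\xi K=\tfrac13$. Consequently the telescoping products $\prod_{p\ge1}(1+\delta_{2p-1})$ and $\prod_{p\ge1}(1+\delta_{2p})$ differ from $1$ only by a convergent geometric series of total mass $O(\lambda^{cn_\kappa})$; enlarging $k_1$ and $m_0$ of the previous subsection if necessary so that $\kappa=k+m$ is large enough that this mass stays below a fixed small threshold, and combining with $|C_{-1}|=1$ and $|C_0|\in[\tfrac23,\tfrac43]$, one concludes $|C_j|\in[\tfrac12,\tfrac32]$ for every $j\ge-2$.

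I expect the main obstacle to be organizational rather than conceptual: one must keep the double bookkeeping perfectly consistent — the exponent sums $(n_\cdot;n_\cdot)$ and the sign/constant recursions, each split according to the parity of the index — and one must observe that the only correction of size as large as $1/3$ is the isolated one at $C_0$, while every recursive correction carries the decisive saving factor $\lambda^{n_\kappa}$. It is exactly this saving, together with the calibration $K=1/(3\xi)$ and Lemma~\ref{lem4}, that prevents the constants from drifting out of $[\tfrac12,\tfrac32]$.
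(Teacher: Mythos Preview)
Your approach is essentially the paper's: induct on $p$, use Lemma~\ref{lem4} together with the cone condition \eqref{eq:0915c} and the calibration $\xi K=\tfrac13$ to bound the contribution of the quadratic term $-2\sigma^{2n_{\kappa+j}}x_{k,m+j}\,v_j$, and check that the exponents of $\lambda$ and $\sigma$ telescope correctly via the identity $(n_\cdot;n_\cdot)$. The one place where you diverge is in the control of $\tfrac12\le|C_j|\le\tfrac32$. The paper only records the additive inequalities $|C_{j+1}|\le|C_{j-1}|+\tfrac13|C_j|$ (its \eqref{eq:0915d2}--\eqref{eq:0915d3}) and then asserts that the geometric series $\sum 3^{-n}=\tfrac12$ closes the bound. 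You instead retain the extra $(\lambda/\sigma)$-saving that the paper's own intermediate calculation exhibits (the factor $\lambda^{n_{\kappa-1}}/\sigma^{n_\kappa}$ in the line before \eqref{eq:0915d2}, and the analogous factor $(\lambda/\sigma)^{n_{\kappa+1}-n_\kappa}$ implicit in the step producing $C_{2p+2}$) and control the constants multiplicatively through the convergent products $\prod_p(1+\delta_{2p\pm1})$. Your variant is actually the more rigorous one: the bare two-step recursion $s_{j+1}\le s_{j-1}+\tfrac13 s_j$ does \emph{not} by itself force $s_j\le\tfrac32$ (its extremal solution grows like $r^j$ with $r=(1+\sqrt{37})/6>1$), so keeping the $(\lambda/\sigma)$-saving and enlarging $\kappa$ if necessary, as you do, is exactly what makes the final step airtight.
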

\begin{proof}
We prove Lemma \ref{lem5} by induction. 
We first show the claim for $p=0$. 
The formula for $\boldsymbol{v}_0$ obviously holds with $C_{-2}=C_{-1}=1$.
Due to \eqref{map0}, we have
\begin{equation*}
DF_\kappa (\boldsymbol{x}_{k,m} )= \left(                 
  \begin{array}{cc}   
    -2\sigma^{2n_\kappa }x_{k,m} &\mp \lambda^{n_\kappa }  \\  
  \pm  \sigma^{n_\kappa }      &  0             \\  
  \end{array}
\right),
\end{equation*}
and
\[
\left(                 
  \begin{array}{c}   
    v_1 \\  
    w_1 \\  
  \end{array}
\right) 
= 
DF_\kappa (\boldsymbol{x}_{k,m} )\left(                 
  \begin{array}{c}   
    v_0 \\  
    w_0 \\  
  \end{array}
\right)=\left(                 
  \begin{array}{c}   
    -2x_{k,m} \sigma^{2n_{\kappa }}v_0\mp \lambda^{n_{\kappa }}w_0 \\  
   \pm \sigma^{n_{\kappa }}v_0 \\  
  \end{array}
\right).
\]
By Lemma~\ref{lem4},  \eqref{eq:0915c} and the definition of $K$,
\[
|2x_{k,m} \sigma^{2n_{\kappa }}v_0|\le \xi\lambda^{n_{\kappa }}|v_0|\le\xi K \lambda^{n_{\kappa }}|w_0|
=\frac{1}{3} \lambda^{n_{\kappa }}|w_0|.
\]
In other words, 
\[
\left(                 
  \begin{array}{c}   
    v_1 \\  
    w_1 \\  
  \end{array}
\right)=\left(                 
  \begin{array}{c}   
    C_0\lambda^{n_{\kappa }}w_0 \\  
    \pm C_{-1}\sigma^{n_{\kappa }}v_0 \\  
  \end{array}
\right),
\]
with a constant  $C_0$ satisfying 
\begin{equation}\label{eq:0915d4}
 1-\frac{1}{3}   \le|C_0|\le 1+\frac{1}{3}.
 \end{equation}

Next we assume that the claim is true for a given $p\geq 0$, and will show the claim with $p+1$ instead of $p$. 
Note that
\begin{align*}
\boldsymbol{v}_{2p+2}
&=
\left(                 
  \begin{array}{c}   
    v_{2p+2} \\  
    w_{2p+2} \\  
  \end{array}
\right)
=DF_{\kappa + 2p +1}(\boldsymbol{x}_{k ,m + 2p+1})
\left(                 
  \begin{array}{c}   
    v_{2p+1} \\  
    w_{2p+1} \\  
  \end{array}
\right)\\
&=\left(                 
  \begin{array}{c}   
    -2x_{k ,m + 2p+1}\sigma^{2n_{\kappa + 2p +1}}v_{2p+1} \mp \lambda^{n_{\kappa + 2p +1}}w_{2p+1} \\  
    \pm \sigma^{n_{\kappa + 2p +1}}v_{2p+1} \\  
  \end{array}
\right),
\end{align*}
whose first coordinate is
\begin{multline*}
   - 2x_{k ,m + 2p+1}\sigma^{2n_{\kappa + 2p +1}} C_{2p}\lambda^{(n_{\kappa };n_{\kappa +2p})}\sigma^{(n_{\kappa +1};n_{\kappa +2p-1})}w_0 \\
 \mp
 C_{2p-1}\lambda^{(n_{\kappa +1};n_{\kappa +2p+1})}\sigma^{(n_{\kappa };n_{\kappa +2p})}v_0 ,
    \end{multline*}
    and second coordinate is
\begin{align*}
      \pm  C_{2p}  \lambda^{(n_{\kappa };n_{\kappa +2p})}\sigma^{(n_{\kappa +1};n_{\kappa +2p+1})}w_0,
    \end{align*}
 by the inductive hypothesis. 
On the other hand, it follows from Lemma~\ref{lem4}, \eqref{eq:0915c} and the monotonicity of $(n_l)_{l\in \mathbb N}$ that the absolute value of the first term of the first coordinate is bounded by
\begin{align*}
& \xi\lambda^{n_{\kappa + 2p +1}}|C_{2p}| \lambda^{(n_{\kappa };n_{\kappa +2p})}\sigma^{(n_{\kappa +1};n_{\kappa +2p-1})}|w_0|\\
&   \quad  \leq  \xi  K \lambda^{n_{\kappa + 2p +1}}
   |C_{2p}| \lambda^{(n_{\kappa -1};n_{\kappa +2p-1})}\sigma^{(n_{\kappa  +2};n_{\kappa +2p})}|v_0|
\\
&   \quad =  \frac{1}{3} \frac{\lambda ^{n_{\kappa -1}}}{\sigma ^{n_\kappa }}    |C_{2p}| \lambda^{(n_{\kappa +1};n_{\kappa +2p+1})}\sigma^{(n_{\kappa  };n_{\kappa +2p})}|v_0|\\
 &   \quad \leq     \frac{1}{3}   |C_{2p}| \lambda^{(n_{\kappa +1};n_{\kappa +2p+1})}\sigma^{(n_{\kappa  };n_{\kappa +2p})}|v_0|.
  \end{align*}
Hence, we can write $\boldsymbol{v}_{2p+2}$ as
\[
\left(                 
  \begin{array}{c}   
    v_{2p+2} \\  
    w_{2p+2} \\  
  \end{array}
\right)
=\left(                 
  \begin{array}{c}   
    C_{2p+1}\lambda^{(n_{\kappa +1};n_{\kappa +2p+1})}\sigma^{(n_{\kappa  };n_{\kappa +2p})}v_0 \\  
  \pm  C_{2p} \lambda^{(n_{\kappa };n_{\kappa +2p})}\sigma^{(n_{\kappa +1};n_{\kappa +2p+1})} w_0 \\  
  \end{array}
\right),
\]
with a constant $C_{2p +1}$ satisfying that
\begin{equation}\label{eq:0915d2}
|C_{2p-1}|- \frac{1}{3}  |C_{2p}| \leq |C_{2p+1}| \leq |C_{2p-1}|+  \frac{1}{3}   |C_{2p}|.
\end{equation}

Similarly, 
\begin{align*}
\boldsymbol{v}_{2p+3}
&=
\left(                 
  \begin{array}{c}   
    v_{2p+3} \\  
    w_{2p+3} \\  
  \end{array}
\right)
=DF_{\kappa + 2p +2}(\boldsymbol{x}_{k ,m + 2p+2})
\left(                 
  \begin{array}{c}   
    v_{2p+2} \\  
    w_{2p+2} \\  
  \end{array}
\right)\\
&=\left(                 
  \begin{array}{c}   
   - 2x_{k ,m + 2p+2}\sigma^{2n_{\kappa + 2p +2}}v_{2p+2}\mp \lambda^{n_{\kappa + 2p +2}}w_{2p+2} \\  
   \pm \sigma^{n_{\kappa + 2p +2}}v_{2p+2} \\  
  \end{array}
\right),
\end{align*}
whose first coordinate is
\begin{multline*}
   - 2x_{k ,m + 2p+2}\sigma^{2n_{\kappa + 2p +2}} C_{2p+1}\lambda^{(n_{\kappa +1};n_{\kappa +2p+1})}\sigma^{(n_{\kappa  };n_{\kappa +2p})}v_0 \\
 \mp C_{2p}\lambda^{(n_{\kappa  };n_{\kappa +2p+2})}\sigma^{(n_{\kappa +1};n_{\kappa +2p+1})}w_0 ,
    \end{multline*}
    and second coordinate is
\begin{align*}
  \pm      C_{2p+1}  \lambda^{(n_{\kappa +1};n_{\kappa +2p+1})}\sigma^{(n_{\kappa };n_{\kappa +2p+2})}v_0
    \end{align*}
    by the previous formula. 
On the other hand, it follows from Lemma~\ref{lem4}, \eqref{eq:0915c} and the monotonicity of $(n_l)_{l\in \mathbb N}$ that the absolute value of the first term of the first coordinate is bounded by
\begin{align*}
& \xi\lambda^{n_{\kappa + 2p +2}}|C_{2p+1}| \lambda^{(n_{\kappa +1};n_{\kappa +2p+1})}\sigma^{(n_{\kappa };n_{\kappa +2p})}|v_0|\\
&     \quad  \leq  \xi  K 
 \lambda^{n_{\kappa + 2p +2}}   |C_{2p+1}| \lambda^{(n_{\kappa };n_{\kappa +2p})}\sigma^{(n_{\kappa  +1};n_{\kappa +2p +1})}|w_0|
\\
&    \quad =    \frac{1}{3}   |C_{2p+1}| \lambda^{(n_{\kappa };n_{\kappa +2p+2})}\sigma^{(n_{\kappa  +1};n_{\kappa +2p+1})}|w_0|.
  \end{align*}
Hence, we can write $\boldsymbol{v}_{2p+3}$ as
\[
\left(                 
  \begin{array}{c}   
    v_{2p+3} \\  
    w_{2p+3} \\  
  \end{array}
\right)
=\left(                 
  \begin{array}{c}   
    C_{2p+2}\lambda^{(n_{\kappa  };n_{\kappa +2p+2})}\sigma^{(n_{\kappa +1};n_{\kappa +2p+1})}w_0  \\  
  \pm  C_{2p+1}  \lambda^{(n_{\kappa +1};n_{\kappa +2p+1})}\sigma^{(n_{\kappa };n_{\kappa +2p+2})}v_0 \\  
  \end{array}
\right),
\]
with a constant $C_{2p +2}$ satisfying that
\begin{equation}\label{eq:0915d3}
|C_{2p}|-  \frac{1}{3}   |C_{2p+1}| \leq |C_{2p+2}| \leq |C_{2p}|+\frac{1}{3}   |C_{2p+1}|.
\end{equation}

Finally, combining \eqref{eq:0915d4}, \eqref{eq:0915d2} and \eqref{eq:0915d3}, we get
\[
 \frac{1}{2} = 1- \left(\frac{1}{3} + \frac{1}{3^2} +\cdots \right) \leq \vert C_j \vert \leq 1+ \left(\frac{1}{3} + \frac{1}{3^2} +\cdots \right) = \frac{3}{2}
\]
for any $j\geq 0$.
This completes the proof of Lemma \ref{lem6}.
\end{proof}

Given two sequences $(a_p)_{p\geq 0}$ and $(b_p)_{p\geq 0}$ of positive  numbers, if there exist constants $c_0,\ c_1>0$, independently of $p$, such that
\[c_0<\dfrac{a_p}{b_p}<c_1,\]  
then, we say that $a_p$ and $b_p$ are equivalent, denoted by $a_p\sim b_p$.

\begin{lem}\label{lem6}
For every $p\geq 0$, we have  
\begin{align*}
|v_{2p}|\sim \lambda^{(n_{\kappa +1};n_{\kappa +2p-1})}\sigma^{(n_{\kappa };n_{\kappa +2p-2})}&<\lambda^{(n_{\kappa };n_{\kappa +2p-2})}\sigma^{(n_{\kappa +1};n_{\kappa +2p-1})}\sim |w_{2p}|,\\
|v_{2p+1}|\sim \lambda^{(n_{\kappa };n_{\kappa +2p})}\sigma^{(n_{\kappa +1};n_{\kappa +2p-1})}&<\lambda^{(n_{\kappa +1};n_{\kappa +2p-1})}\sigma^{(n_{\kappa };n_{\kappa +2p})}\sim |w_{2p+1}|. 
\end{align*}
\end{lem}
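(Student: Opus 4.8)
\emph{Proof proposal.}
The plan is to read Lemma~\ref{lem6} off from the closed formulas of Lemma~\ref{lem5}. First I would recall that, by Lemma~\ref{lem5},
\[
|v_{2p}|=|C_{2p-1}|\,|v_0|\,\lambda^{(n_{\kappa+1};n_{\kappa+2p-1})}\sigma^{(n_{\kappa};n_{\kappa+2p-2})},\qquad
|w_{2p}|=|C_{2p-2}|\,|w_0|\,\lambda^{(n_{\kappa};n_{\kappa+2p-2})}\sigma^{(n_{\kappa+1};n_{\kappa+2p-1})},
\]
and likewise for $|v_{2p+1}|$ and $|w_{2p+1}|$. Since $\tfrac12\le|C_j|\le\tfrac32$ for all $j\ge-2$ by Lemma~\ref{lem5}, and since $v_0$ and $w_0$ are fixed reals with $v_0,w_0\neq0$ (nonvanishing being forced by \eqref{eq:0915c}) that do not depend on $p$, the prefactors $|C_{2p-1}|\,|v_0|$, $|C_{2p-2}|\,|w_0|$, and their odd-index analogues are bounded above and below by constants independent of $p$. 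Hence each of the four $\sim$-relations asserted in Lemma~\ref{lem6} holds.

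It then remains to check the two strict inequalities in the statement. For the even index this is the claim
\[
\lambda^{(n_{\kappa+1};n_{\kappa+2p-1})}\sigma^{(n_{\kappa};n_{\kappa+2p-2})}<\lambda^{(n_{\kappa};n_{\kappa+2p-2})}\sigma^{(n_{\kappa+1};n_{\kappa+2p-1})},
\]
which, after dividing, is equivalent to $(\lambda/\sigma)^{(n_{\kappa+1};n_{\kappa+2p-1})-(n_{\kappa};n_{\kappa+2p-2})}<1$; since $0<\lambda<1<\sigma$, this holds as soon as $(n_{\kappa+1};n_{\kappa+2p-1})>(n_{\kappa};n_{\kappa+2p-2})$. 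Both exponent sums have $p$ summands, which I would pair as $n_{\kappa+2i+1}$ against $n_{\kappa+2i}$ for $0\le i\le p-1$; each pair satisfies $n_{\kappa+2i+1}>n_{\kappa+2i}$ because $(n_l)_l$ is strictly increasing, so summing gives the desired strict inequality for every $p\ge1$. For the odd index the comparison reduces in the same way to $(n_{\kappa};n_{\kappa+2p})>(n_{\kappa+1};n_{\kappa+2p-1})$, where the left sum has one more summand; here I would pair $n_{\kappa+2i}$ against $n_{\kappa+2i-1}$ for $1\le i\le p$ (again each pair has a positive difference) and keep the leftover term $n_{\kappa}>0$, which yields the strict inequality for all $p\ge0$.

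There is no genuine obstacle: once Lemma~\ref{lem5} is available, Lemma~\ref{lem6} is pure bookkeeping, the only point deserving a line of justification being the term-by-term comparison of the exponent sums via strict monotonicity of $(n_l)_l$. The sole mild subtlety is the degenerate case $p=0$ of the even line, where the convention $(n_p;n_{p-2})=0$ makes both exponent sums vanish and the displayed $<$ collapses to an equality; this case is harmless, since there the assertion reads simply $|v_0|\sim1\sim|w_0|$, in agreement with \eqref{eq:0915c}.
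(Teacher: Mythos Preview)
Your proposal is correct and follows essentially the same approach as the paper: both deduce the four $\sim$-relations directly from the explicit formulas and bounds of Lemma~\ref{lem5}, and both obtain the strict inequalities by comparing the exponent sums $(n_{\kappa+1};n_{\kappa+2p-1})$ versus $(n_{\kappa};n_{\kappa+2p-2})$ (and the odd analogue) using monotonicity of $(n_l)_l$, with the paper phrasing this as separate bounds on the $\lambda$- and $\sigma$-factors rather than via the ratio $(\lambda/\sigma)^{\cdots}$. Your explicit remark on the degenerate case $p=0$ of the even line, where both exponent sums vanish and the inequality collapses to equality, is a point the paper passes over silently.
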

\begin{proof}
The equivalence relations follow from Lemma~\ref{lem5} directly. Since $0<\lambda<1$, $\sigma>1$,
\[
\lambda^{(n_{\kappa +1};n_{\kappa +2p-1})}<\lambda^{(n_{\kappa };n_{\kappa +2p-2})},\quad \sigma^{(n_{\kappa };n_{\kappa +2p-2})}<\sigma^{(n_{\kappa +1};n_{\kappa +2p-1})},
\]
which gives the former formula immediately. In order to prove the later formula, it suffice to notice that 
\[
\lambda^{(n_{\kappa };n_{\kappa +2p})}<\lambda^{(n_{\kappa +2};n_{\kappa +2p})}<\lambda^{(n_{\kappa +1};n_{\kappa +2p-1})},
\]
\[
\sigma^{(n_{\kappa +1};n_{\kappa +2p-1})}<\sigma^{(n_{\kappa +2};n_{\kappa +2p})}<\sigma^{(n_{\kappa };n_{\kappa +2p})}.
\]
This completes the proof of Lemma \ref{lem6}.
\end{proof}

An immediate consequence of Lemma~\ref{lem6} is that
\[
\|\boldsymbol{v}_{2p}\|\sim \lambda^{(n_{\kappa };n_{\kappa +2p-2})}\sigma^{(n_{\kappa +1};n_{\kappa +2p-1})},
\]
\[\
\|\boldsymbol{v}_{2p+1}\|\sim \lambda^{(n_{\kappa +1};n_{\kappa +2p-1})}\sigma^{(n_{\kappa };n_{\kappa +2p})}.
\]
Since both $k$ and $m$ are even numbers, for every integer $p\geq 0$, we have
\[
|n_{\kappa +2p}-n_0\alpha^{\frac{\kappa}{2}+p}\beta^{\frac{\kappa }{2}+p}|\le 1,\quad |n_{\kappa +2p+1}-n_0\alpha^{\frac{\kappa }{2}+p+1}\beta^{\frac{\kappa }{2}+p}|\le 1.
\]  
According to Lemma~\ref{lem6}, 
\begin{align*}
&
\lim\limits_{p\to \infty}
\dfrac{\log\|D(F_{n_{\kappa +2p}}\circ\cdots\circ F_{n_{\kappa +1}}\circ F_{n_{\kappa }})(\boldsymbol{x}_{k,m})\boldsymbol{v}_0\|}{(n_{\kappa }+2)+(n_{\kappa +1}+2)+\cdots+(n_{\kappa +2p}+2)}\\
& \quad =
\lim\limits_{p\to\infty}
\dfrac{\log\|\boldsymbol{v}_{2p+1}\|}{n_{\kappa }+n_{\kappa +1}+\cdots+n_{\kappa +2p}+O(p)}\\
& \quad  =
\lim\limits_{p\to \infty} 
\dfrac{\log\lambda^{(n_{\kappa +1};n_{\kappa +2p-1})}\sigma^{(n_{\kappa };n_{\kappa +2p})} +O(p)}{n_{\kappa }+n_{\kappa +1}+\cdots+n_{\kappa +2p}+O(p)}\\
& \quad  =
\lim\limits_{p\to \infty}
 \dfrac{(n_{\kappa +1}+n_{\kappa + 3}+\cdots+n_{\kappa +2p-1})\log\lambda+(n_{\kappa }+n_{\kappa +2}+\cdots+n_{\kappa +2p})\log\sigma +O(p)}{n_{\kappa }+n_{\kappa +1}+\cdots+n_{\kappa +2p}+O(p)}\\
&\quad  =
\lim\limits_{p\to \infty}
 \dfrac{n_0\alpha^{\frac{\kappa }{2}}\beta^{\frac{\kappa }{2}}[\alpha(1+\alpha\beta+\cdots+ (\alpha\beta)^{p-1})\log\lambda+(1+\alpha\beta+\cdots+(\alpha\beta)^p)\log\sigma]+O(p)}{n_0\alpha^{\frac{\kappa }{2}}\beta^{\frac{\kappa }{2}}[1+\alpha+\alpha\beta+\alpha^2\beta+\cdots+(\alpha\beta)^p] +O(p)}\\
& \quad  = \dfrac{\log\lambda+\beta\log\sigma}{1+\beta},
\end{align*}
and
\begin{align*}
&
\lim\limits_{p\to \infty}
\dfrac{\log\|D(F_{n_{\kappa +2p-1}}\circ\cdots\circ F_{n_{\kappa +1}}\circ F_{n_{\kappa }})(\boldsymbol{x}_{k,m})\boldsymbol{v}_0\|}{(n_{\kappa }+2)+(n_{\kappa +1}+2)+\cdots+(n_{\kappa +2p-1}+2)}\\
&\quad  =
\lim\limits_{p\to\infty}
\dfrac{\log\|\boldsymbol{v}_{2p}\|}{n_{\kappa }+n_{\kappa +1}+\cdots+n_{\kappa +2p-1}+O(p)}\\
&\quad  =
\lim\limits_{p\to \infty}
 \dfrac{\log\lambda^{(n_{\kappa };n_{\kappa +2p-2})}\sigma^{(n_{\kappa +1};n_{\kappa +2p-1})} +O(p)}{n_{\kappa }+n_{\kappa +1}+\cdots+n_{\kappa +2p-1}+O(p)}\\
&\quad  =
\lim\limits_{p\to \infty}
 \dfrac{(n_{\kappa }+n_{\kappa +2}+\cdots+n_{\kappa +2p-2})\log\lambda+(n_{\kappa +1}+n_{\kappa + 2p +1}+\cdots+n_{\kappa +2p-1})\log\sigma +O(p)}{n_{\kappa }+n_{\kappa +1}+\cdots+n_{\kappa +2p-1}+O(p)}\\
&\quad  =
\lim\limits_{p\to \infty} 
\dfrac{n_k\alpha^{\frac{m}{2}}\beta^{\frac{m}{2}}[\alpha(1+\alpha\beta+\cdots+ (\alpha\beta)^{p-1})\log\lambda+(1+\alpha\beta+\cdots +(\alpha\beta)^{p-1})\log\sigma]+O(p)}{n_k\alpha^{\frac{m}{2}}\beta^{\frac{m}{2}}[1+\alpha+\alpha\beta+\alpha^2\beta+\cdots+(\alpha\beta)^{p-1}+\alpha^p\beta^{p-1}]+O(p)}\\
&\quad  = \dfrac{\log\lambda+\alpha\log\sigma}{1+\alpha}.
\end{align*}
Since
 \[\dfrac{\log\lambda+\beta\log\sigma}{1+\beta}\not=\dfrac{\log\lambda+\alpha\log\sigma}{1+\alpha},\] 
together with the remark in the end of Subsection \ref{s:4.1},
this completes the proof of the assertion for the Lyapunov irregularity in Theorem \ref{thm:main}, where  $U_f$ and $V_f$ are the interiors of  $F_{\kappa -1} \circ F_{\kappa -2} \circ \cdots \circ F_{k}(U_{k,0})$ (under the coordinate translation) and $\{ (v_0, w_0) \mid K^{-1} \leq \frac{\vert v_0\vert }{\vert w_0\vert } \leq K\}$, respectively.

\subsection{Birkhoff (ir)regularity}\label{s:4.4}
To show Birkhoff (ir)regularity, as well as the uncountability   of   $f$  up to conjugacy in Theorem \ref{thm:main}, we need a more detailed description of 
Colli-Vargas' theorem as follows.
Let $g$ be the surface diffeomorphism of Theorem \ref{prop:0812} and 
\[
\mathbb B_+^u:=g([-1,1]^2) \cap ([0,1]\times [-1,1]), \quad \mathbb B_-^u:=g([-1,1]^2) \cap ([-1,0]\times [-1,1]).
\]
Fo each $l\in \mathbb{N}$ and $\underline w=(w_{1}, w_2, \ldots ,w_{l}) \in \{ +, -\}^{l}$, we let
\begin{align*}
\mathbb{B}^{u}_{\underline w}:=\bigcap _{j=1}^l g^{-j+1} (\mathbb B_{w_j}^u), \quad 
\mathbb{G}^{u}_{\underline w}:=\mathbb{B}^{u}_{\underline w} \setminus \left(\mathbb{B}^{u}_{\underline w+}\cup \mathbb{B}^{u}_{\underline w-}\right),
\end{align*}
where $\underline w\pm  =(w_1,\ldots ,w_l, \pm )\in \{+,-\}^{l+1}$.
\begin{thm}[\cite{CV2001}]\label{thm:0911b}
Let $g $ be the surface diffeomorphism with a robust homoclinic tangency given in Theorem \ref{prop:0812}.
Take
\begin{itemize}
\item a   $\mathcal C^r$-neighborhood $\mathcal O$ of $g$ with $2\leq r<\infty$, 
\item an increasing sequence $(n_k^0)_{k\in \mathbb N}$ of integers satisfying $n_{k}^0 =O((1+\eta )^k) $ with some  $\eta >0$,  
\item a  sequence $(\underline z _k^0)_{k\in \mathbb N}$ of codes with $\underline z_k^0 \in \{+,-\}^{n_k^0}$. 
\end{itemize} 
Then, one can find 
\begin{itemize}
\item a   diffeomorphism $f$ in $\mathcal O$ which coincides with $g$ on $\mathbb B_+^u \cup \mathbb B_-^u$, 
\item a sequence of rectangles $(R_k)_{k\in \mathbb N}$, 
\item  increasing sequences $(\hat n_k)_{k\in \mathbb N}$, $(\hat m_k)_{k\in \mathbb N}$ of integers satisfying that $\tilde n_k := \hat n_{k}+\hat m_{k+1} =O(k) $ and depends only on $\mathcal O$,
\item  sequences $(\hat{\underline z} _k)_{k\in \mathbb N}$, $(\hat{ \underline w} _k)_{k\in \mathbb N}$ of codes with $\hat{\underline  z}_k \in \{+,-\}^{\hat n_k}$, $\check{\underline  w}_k \in \{+,-\}^{\hat m_k}$
\end{itemize}
 such that 
 for each $k\in \mathbb N$, 
(a), (b) in Theorem  \ref{prop:0812} hold   and
\begin{itemize}
\item[$\mathrm{(c)}$]  $R_k \subset \mathbb G_{\underline z_k}^u$ for  $\underline z_k=\hat{\underline  z}_k   \underline  z_k ^0 [\hat{\underline  w}_{k+1} ]^{-1} $, where  $[\underline w]^{-1} = (w_{l},\ldots ,w_{2}, w_1)$ for each $\underline w =(w_1, w_2,\ldots ,w_l) \in \{ +,-\} ^l$, $l\in \mathbb N$.
\end{itemize} 
\end{thm}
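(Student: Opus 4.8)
The plan is to treat Theorem \ref{thm:0911b} as essentially a bookkeeping refinement of the original Colli--Vargas construction, the content of which is already contained in \cite{CV2001}; the task is to make the symbolic coding of the rectangles $R_k$ explicit. First I would recall the geometry: the affine horseshoe has two branches $\mathbb B^u_+$ and $\mathbb B^u_-$ of the first image of the unit square, and the cylinders $\mathbb B^u_{\underline w}$ and ``gaps'' $\mathbb G^u_{\underline w}$ form the usual Markov partition refinements indexed by finite words. The key point is that, by construction in \cite{CV2001}, each $R_k$ is a thin rectangle contained in an unstable strip; one needs to track which branch of the horseshoe the orbit segment of length $n_k+2$ passes through at each step. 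The word $\underline z_k$ of length $n_k = \hat n_k + n_k^0 + \hat m_{k+1}$ records exactly this itinerary, decomposed into three blocks: an initial segment $\hat{\underline z}_k$ of length $\hat n_k$ governed by the ``fixed'' part of the construction (the transition into the linearizing region near the tangency), the freely prescribed central block $\underline z_k^0$ of length $n_k^0$ that encodes the long excursion along the horseshoe, and a terminal block $[\hat{\underline w}_{k+1}]^{-1}$ of length $\hat m_{k+1}$ coming from the way $R_{k+1}$ sits inside the next strip (reversed, because one reads the itinerary of $f^{n_k+2}(R_k)\subset R_{k+1}$ backward under $g^{-1}$).

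The main steps I would carry out are: (i) Re-derive the ``Rectangle lemma'' of \cite{CV2001} keeping the coding data visible, i.e.\ show that the auxiliary sequences $(\hat n_k)$, $(\hat m_k)$ depending only on the neighborhood $\mathcal O$, together with fixed codes $\hat{\underline z}_k$, $\check{\underline w}_k$, can be chosen so that the perturbation $f$ producing the quadratic return map \eqref{map0} forces $R_k$ into the prescribed gap $\mathbb G^u_{\underline z_k}$. (ii) Verify that $\tilde n_k = \hat n_k + \hat m_{k+1}$ matches the $\tilde n_k$ of Theorem \ref{prop:0812} and is $O(k)$, so that the hypothesis $n_k^0 = O((1+\eta)^k)$ still suffices and items (a), (b) carry over verbatim. (iii) Check that $f$ can be taken to agree with $g$ on $\mathbb B^u_+\cup\mathbb B^u_-$: this is automatic because the Colli--Vargas perturbations are supported away from the first image of the square, in a small neighborhood of the tangency, so the branch structure $\mathbb B^u_\pm$ is untouched. (iv) Conclude that membership $R_k\subset\mathbb G^u_{\underline z_k}$ with $\underline z_k = \hat{\underline z}_k\,\underline z_k^0\,[\hat{\underline w}_{k+1}]^{-1}$ holds, which is exactly (c).

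I expect step (i) to be the main obstacle, since it requires re-opening the proof in \cite{CV2001} and isolating the combinatorial data that the original authors did not need to name; in particular one must check that the central block $\underline z_k^0$ can genuinely be prescribed freely (this is where the freedom in Palis--Takens type constructions lives) while the boundary blocks are rigidly determined by the $\mathcal C^r$ size of the perturbation and the position of the tangency. The reversal $[\hat{\underline w}_{k+1}]^{-1}$ in the terminal block is the subtle index-bookkeeping point: the itinerary of $R_k$ over its \emph{own} time window ends by landing in $R_{k+1}$, and the first $\hat m_{k+1}$ symbols of $R_{k+1}$'s own code appear — read in reverse order — as the last $\hat m_{k+1}$ symbols of $R_k$'s code, because those symbols describe the backward orbit of the front face of $R_{k+1}$. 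Once these identifications are pinned down, the remaining verifications are routine transcriptions of \cite{CV2001}, so I would present them briefly and refer the reader to pp.~1674 and 1975--1976 of that paper for the quantitative estimates.
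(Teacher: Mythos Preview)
Your proposal is reasonable, but you are doing more than the paper does: the paper offers no proof at all of Theorem~\ref{thm:0911b}. It is stated with the attribution \cite{CV2001} and then used immediately; the authors treat it as a black-box quotation of the Colli--Vargas construction, just as they do for Theorem~\ref{prop:0812} (for which they at least point to ``p.~1674'' and ``pp.~1975--1976'' of \cite{CV2001}). So there is nothing to compare your argument against; the paper's ``proof'' is the citation.

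That said, your outline of how to extract the symbolic-coding refinement from \cite{CV2001} is sensible and matches the spirit of the statement. Your identification of the three blocks $\hat{\underline z}_k\,\underline z_k^0\,[\hat{\underline w}_{k+1}]^{-1}$ with, respectively, the forced entry into the linear region, the freely prescribed long horseshoe excursion, and the reversed initial code of $R_{k+1}$, is the correct reading. If you actually want to write this up, the only substantive work is indeed your step~(i); steps~(ii)--(iv) are immediate once the Colli--Vargas perturbation scheme is unpacked. Just be aware that in the present paper this is not meant to be reproved, only invoked.
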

Fix a neighborhood $\mathcal O$ of $g$ 
and a sequence $(n_k^0)_{k\in \mathbb N}$ as given in  \eqref{eq:0915f}.
To indicate the dependence of $\boldsymbol{z} = (\underline z _k^0)_{k\in \mathbb N}$ on $f$ and  $(R_k)_{k\in \mathbb N}$ in Theorem \ref{thm:0911b}, we write them as $f_{\boldsymbol{z}}$ and  $(R_{k,\boldsymbol{z}})_{k\in \mathbb N}$.

We first apply Theorem \ref{thm:0911b} to the sequence  $\boldsymbol{z} = (\underline z _k^0)_{k\in \mathbb N}$ given by 
\[
\underline z_k^0=(+,+,\ldots ,+,   z'_k), \quad z'_k\in \{ +, -\}
\]
for each $k\geq 1$.
Then, it is straightforward to see from the item (c) of Theorem \ref{thm:0911b} that for any $k\in \mathbb N$, continuous function  $\varphi : M\to \mathbb R$ and $ \epsilon >0$, there exist integers $k_2$ and $L_0$ such that
\[
\sup _{\boldsymbol{x}\in R_k} \left\vert \varphi (f^{n}_{\boldsymbol{z} }(\boldsymbol{x} ) )-  \varphi( \boldsymbol{p} _+) \right\vert <\epsilon
\]
whenever    
\[
N(k,k') + L_0 \leq n  \leq N(k,k'+1) - L_0
\]
with some $k'\geq k_2$, where $\boldsymbol{p} _+$ is the continuation for $f_{\boldsymbol{z} }$ of the saddle fixed point of $g$ corresponding to the point set  $\mathbb B_{(+, +, \ldots )}^u$ and 
\[
N(p,q):=\sum _{k=p}^q (n_k +2)
\]
 for each $p ,q \in \mathbb N$ with $p\leq q$.  
Hence, 
  it holds that
\[
\lim _{n\to\infty} \frac{1}{n}\sum_{j=0}^{n-1} \varphi ( f^j_{\boldsymbol{z} }(\boldsymbol{x} )) = \varphi (\boldsymbol{p} _+)
\]
for any  $k\in \mathbb N$,  $\boldsymbol{x} \in R_k$ and continuous function  $\varphi : M\to \mathbb R$.
Since the open set $U_{f_{\boldsymbol{z}}} $ consisting of Lyapunov irregular points constructed in the previous subsection is of the form $f_{\boldsymbol{z}} ^n (U_{0,k'})$ with some positive integers $n$ and $k'$, it follows from the remark following \eqref{eq:0915e2} and the item (a) of Theorem \ref{thm:0911b} that 
$U_{f_{\boldsymbol{z}}}  \subset R_k$ with some $k$. 
 This implies that any point in $U_{f_{\boldsymbol{z}}} $ is Birkhoff regular.

Notice  that the choice of $(z'_1, z'_2, \ldots ) $ in $\boldsymbol{z}$ is uncountable. 
On the other hand, 
if $\boldsymbol{z} =(\underline z_k^0)_{k\in \mathbb N}$ and $\boldsymbol{w} = (\underline w_k^0)_{k\in \mathbb N}$ are of the above form (in particular, $\underline w_k^0=(+,+,\ldots ,+,   w'_k) \in \{ +,-\} ^{n_k^0}$ with $w'_k\in \{ +, -\}$) and $z_k' \neq w_k'$ for some 
 $k$, 
  then 
$f_{\boldsymbol{z} }$ and $f_{\boldsymbol{w} }$ are not topologically conjugate, or $f_{\boldsymbol{z} }$ and $f_{\boldsymbol{w} }$ are   topologically conjugate by a homeomorphism $h$ on $M$ and $h(R_{k, \boldsymbol{z}}) \cap R_{k, \boldsymbol{w}} = \emptyset$ for every $k$, because   of the item (c) of Theorem \ref{thm:0911b} and the fact that both $f_{\boldsymbol{z} }$ and $f_{\boldsymbol{w} }$ coincide with $g$ on $\mathbb B_+^u \cup \mathbb B_-^u$.
Therefore, since  there can exist at most countably many, mutually disjoint open sets  (of positive Lebesgue measure) on $M$ due to the compactness of $M$, 
we complete the proof of the claim for the uncountable set $\mathcal R$ in Theorem \ref{thm:main}.

We next apply Theorem \ref{thm:0911b} to the sequence  $\boldsymbol{z} = (\underline z _k^0)_{k\in \mathbb N}$ given by 
\[
\underline z_k^0=
\begin{cases}
(+,+,\ldots ,+,z_k') \quad &\text{if $(2p -1)^2\leq k< (2p )^2$ with some $p$}\\
(-,-,\ldots ,-,z_k') \quad & \text{if $(2p )^2\leq k< (2p +1)^2$ with some $p$}
\end{cases}
\]
with $z'_k\in \{ +, -\}$
for each $k\geq 1$.
Then, it follows from the item (c) of Theorem \ref{thm:0911b} that for any $k\in \mathbb N$, continuous function  $\varphi : M\to \mathbb R$ and $ \epsilon >0$, there exist integers $k_2$ and $L_0$ such that
\[
\sup _{\boldsymbol{x}\in R_k} \left\vert \varphi (f^{n}_{\boldsymbol{z} }(\boldsymbol{x} ) )-  \varphi( \boldsymbol{p} _+) \right\vert <\epsilon
\]
whenever    
\[
N(k,k') + L_0 \leq n  \leq N(k,k'+1) - L_0, \quad \max\{ k_2 , (2p -1)^2\} \leq k'< (2p)^2
\]
with some $p$, 
and
\[
\sup _{\boldsymbol{x}\in R_k} \left\vert \varphi (f^{n}_{\boldsymbol{z} }(\boldsymbol{x} ) )-  \varphi( \boldsymbol{p} _-) \right\vert <\epsilon
\]
whenever    
\[
N(k,k') + L_0 \leq n  \leq N(k,k'+1) - L_0, \quad \max\{ k_2 , (2p )^2\} \leq k'< (2p+1)^2
\]
with some $p$, where $\boldsymbol{p} _-$ is the continuation for $f_{\boldsymbol{z} }$ of the saddle fixed point of $g$ corresponding to the point set  $\mathbb B_{(-, -, \ldots )}^u$.
Hence, if we let 
\[
\mathbf N(\ell ):= N(k,(\ell +1)^2) - N(k,\ell ^2) = \sum _{k=\ell ^2+1}^{(\ell +1)^2} (n_k +2), 
\]
then for
any $k\in \mathbb N$, $\boldsymbol{x}\in R_k$ and continuous function  $\varphi : M\to \mathbb R$, we have
\[
  \frac{1}{\mathbf N(2p-1)} \sum_{j= N(k,(2p -1)^2)}^{  N(k,(2p )^2)-1} \varphi (f^{j}_{\boldsymbol{z} }(\boldsymbol{x} ) ) =    \varphi ( \boldsymbol{p} _+)  +o(1)
\]
and
\[
  \frac{1}{\mathbf N(2p)} \sum_{j= N(k,(2p )^2)}^{N(k,(2p+1 )^2)-1} \varphi (f^{j}_{\boldsymbol{z} }(\boldsymbol{x} ) )  =    \varphi ( \boldsymbol{p} _-)  +o(1).
\]
Since $\mathbf N(1 ) + \mathbf N(2 ) +\cdots + \mathbf N(\ell -1) 
=o(\mathbf N(\ell ) ) $, this implies that, with $\ell := \lceil \sqrt k\rceil $ which we assume to be an odd number for simplicity, 
\begin{align*}
&\frac{1}{N(k,(2p +1)^2)}\sum_{j=0}^{N(k,(2p +1)^2)-1} \varphi (f^{j}_{\boldsymbol{z} }(\boldsymbol{x} ) )   \\
& \quad =\frac{1}{N(k,(2p +1)^2) -N(k, \ell ^2)}\sum_{j=N(k, \ell ^2)}^{N(k,(2p +1)^2)-1} \varphi (f^{j}_{\boldsymbol{z} } (\boldsymbol{x} ) )  +o(1) \\
& \quad =  \frac{ \mathbf N(\ell ) + \mathbf N(\ell +2) + \cdots + \mathbf N(2p -1)}{\mathbf N(\ell ) +\mathbf N(\ell +1) + \cdots + \mathbf N(2p )}  \varphi ( \boldsymbol{p} _+)
 + \frac{ \mathbf N(\ell +1) + \mathbf N(\ell +3) + \cdots + \mathbf N(2p)}{\mathbf N(\ell ) +\mathbf N(\ell +1) + \cdots + \mathbf N(2p)}  \varphi ( \boldsymbol{p} _-)  +o(1) \\
 & \quad \to  \varphi ( \boldsymbol{p} _+) \quad (p\to \infty ).
\end{align*}
Similarly we have
\[
\lim _{p\to \infty}\frac{1}{N(k,(2p )^2)}\sum_{j=0}^{N(k,(2p )^2)-1} \varphi (f^{j}_{\boldsymbol{z} }(\boldsymbol{x} ) ) = \varphi ( \boldsymbol{p} _-). 
\]
That is, any point in $R_k$ is Birkhoff irregular.
Therefore, repeating the argument for $\mathcal R$, 
 we obtain the claim for the uncountable set $\mathcal I$ in Theorem \ref{thm:main}.
This completes the proof of Theorem \ref{thm:main}.

\begin{rem}
The proof of Birkhoff (ir)regularity in this subsection  essentially appeared in Colli-Vargas \cite{CV2001}.
The   difference is that our $(n_k^0)_{k\in \mathbb N}$ increases exponentially fast because of the requirement \eqref{eq:0915f}, while  their  $(n_k^0)_{k\in \mathbb N}$ is  of order $O(k^2)$.
  \end{rem}

\appendix

\section{Lebesgue measurability of irregular sets}

Although it might be a folklore theorem, we have never seen the proof that Birkhoff and  Lyapunov irregular sets are Lebesgue measurable.
  In this appendix we show that   Birkhoff and  Lyapunov irregular sets are Lebesgue measurable as a corollary of the following proposition.
\begin{prop}
Let $T$ be a Polish space and $(\theta _n)_{n\in \mathbb N}$   a sequence of functions from $M\times T$ to $\mathbb R$. 
Then the irregular set $I$ of $(\theta _n)_{n\in \mathbb N}$ over $T$, given by
\[
I=\left\{ x\in M \mid \text{there exists $t\in T$ for which $\displaystyle \lim_{n\to\infty} \theta _n(x,t)$ does not exist}\right\},
\]
is a Lebesgue measurable set of $M$. 
\end{prop}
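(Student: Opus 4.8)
The plan is to realise $I$ as the image, under a continuous projection, of a Borel subset of the product $M\times T$, and then to quote the classical fact that analytic subsets of a Polish space are universally measurable. We understand here, as is the case in both intended applications, that each $\theta_n\colon M\times T\to\mathbb R$ is continuous — or at least Borel measurable; some such hypothesis is needed, since for arbitrary $\theta_n$ the conclusion fails. In the Lyapunov application $\theta_n(x,v)=\frac1n\log\|Df^n(x)v\|$ on $M\times(\mathbb R^{\dim M}\setminus\{0\})$ (working in local charts trivialising $TM$, which is harmless since Lebesgue measurability is a local property), and in the Birkhoff application $\theta_n(x,\varphi)=\frac1n\sum_{j=0}^{n-1}\varphi(f^jx)$ with $\varphi$ ranging over the space $C(M)$ of real continuous functions on $M$ with the uniform norm. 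Note that $M$, being a compact metric space, and $C(M)$ and $\mathbb R^{\dim M}\setminus\{0\}$, being separable completely metrizable, are Polish, so $M\times T$ is Polish.

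First I would check that
\[
D:=\bigl\{(x,t)\in M\times T:\ \lim_{n\to\infty}\theta_n(x,t)\text{ does not exist}\bigr\}
\]
is a Borel subset of $M\times T$. Using that $\limsup_n a_n>q$ (resp.\ $\liminf_n a_n<q$) holds iff $a_n>q$ (resp.\ $a_n<q$) for infinitely many $n$, and that $\lim_n a_n$ fails to exist in $[-\infty,+\infty]$ iff $\liminf_n a_n<q_1<q_2<\limsup_n a_n$ for some rationals $q_1<q_2$, one obtains
\[
D=\bigcup_{\substack{q_1,q_2\in\mathbb Q\\ q_1<q_2}}\Bigl(\bigcap_{N\in\mathbb N}\bigcup_{n\ge N}\{\theta_n<q_1\}\Bigr)\cap\Bigl(\bigcap_{N\in\mathbb N}\bigcup_{n\ge N}\{\theta_n>q_2\}\Bigr),
\]
a countable Boolean combination of Borel subsets of $M\times T$ (open subsets, if each $\theta_n$ is continuous), hence Borel. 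If one prefers to forbid also the cases $\theta_n(x,t)\to\pm\infty$, one adjoins to $D$ the two Borel sets $\{(x,t):\liminf_n\theta_n(x,t)=+\infty\}$ and $\{(x,t):\limsup_n\theta_n(x,t)=-\infty\}$; in both applications $\theta_n$ is uniformly bounded, so this is immaterial.

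Next, observe that $I=\pi_M(D)$, where $\pi_M\colon M\times T\to M$ is the coordinate projection. Since $\pi_M$ is continuous and $D$ is a Borel subset of the Polish space $M\times T$, the set $I$ is an analytic (Suslin) subset of $M$. By the classical measurability theorem of Lusin, every analytic subset of a Polish space is universally measurable; in particular $I$ lies in the completion of the Borel $\sigma$-algebra of $M$ with respect to the Riemannian volume, that is, $I$ is Lebesgue measurable. For the Lyapunov irregular set one patches together this argument over a countable atlas, which suffices by locality of Lebesgue measurability.

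The one genuine subtlety is the final step: $I$ need not be a Borel set, because the existential quantifier over the uncountable parameter space $T$ can turn a Borel set into a properly analytic one, so the elementary closure properties of the Borel $\sigma$-algebra do not suffice and one must invoke universal measurability of analytic sets. For the \emph{Birkhoff} irregular set this can be bypassed: by uniform density of a countable subset $\{\varphi_k\}$ of $C(M)$, a point is Birkhoff regular iff the Birkhoff averages of every $\varphi_k$ converge, so there $I$ is a countable union of Borel sets and is in fact Borel. For the \emph{Lyapunov} irregular set no such reduction to countably many directions is available — compare the Ott--Yorke phenomenon, where irregularity is confined to an exceptional, non-generic direction — and the analytic-sets argument appears to be needed.
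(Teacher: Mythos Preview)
Your argument is essentially identical to the paper's: both show the oscillation set $D\subset M\times T$ is Borel via the same $\limsup/\liminf$ decomposition over rational pairs, then project along $T$ and invoke that analytic sets are universally (hence Lebesgue) measurable. You are more careful than the paper in making the continuity/Borel-measurability hypothesis on $\theta_n$ explicit (the paper's statement omits it, though its proof tacitly uses it when calling $A_n(\alpha)$ and $B_n(\beta)$ open), and your final observation that the Birkhoff irregular set is in fact Borel---via a countable uniformly dense family in $C(M)$---actually settles a point the paper's closing Remark leaves open.
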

For simplicity, we assume that $M$ is an open subset of $\mathbb R^d$ and identify $TM$ with $M\times \mathbb R^d$. 
The Birkhoff irregular set of a continuous map $f: M\to M$ is the irregular set  of $(\theta _n)_{n\in \mathbb N}$,
\[
\theta _n(x,\varphi ):= \frac{1}{n}\sum _{j=0}^{n-1} \varphi \circ f^j(x) \quad ((x,\varphi )\in M\times \mathcal C^0(M)),
\]
 over $T=\mathcal C^0(M)$, i.e.~the space of all continuous functions on $M$, 
and  the Lyapunov irregular set of a differentiable map $f: M\to M$ is the irregular set  of $(\theta _n)_{n\in \mathbb N}$,  
\[
\theta _n(x, v):= \frac{1}{n}\log \Vert Df^n(x)v\Vert \quad ((x,v)\in M\times (\mathbb R^d\setminus \{0\})),
\]
over $T=\mathbb R^d \setminus \{0\}$.
\begin{proof}
We first note that $I$ is the projection of 
\[
\widehat I:= \left\{(x,t)\in M\times T   \mid \text{$\displaystyle \lim_{n\to\infty} \theta _n(x,t)$ does not exist} \right\} 
\]
along the Polish space $T$, 
that is,
\[
I=\left\{x\in M \mid \text{there exists $t\in T$ such that $(x,t)\in \widehat I$}\right\}.
\]

We will show that $\widehat I$ is a Borel set.  
For each $n\in \mathbb N$, $\alpha , \beta \in \mathbb R$, we define open sets $A_n (\alpha )$ and $B_n (\beta )$ of $M\times T$ by
\begin{align*}
&A_{n}(\alpha ) =\left\{ (x,t)\in M\times T \mid   \theta _n(x,t)  >\alpha   \right\},\\
&B_{n}(\beta ) =\left\{ (x,t)\in M\times T \mid   \theta _n(x,t)  <\beta   \right\}.
\end{align*}
Notice that
\[
\left\{(x,t)\in M\times T \mid \limsup_{n\to\infty} \theta _n(x,t) \geq \alpha \right\} 
=\bigcap _{n_0\in \mathbb N}\bigcup _{n\geq n_0}A_{n}(\alpha )
\]
and 
\[
\left\{(x,t)\in M\times T \mid \liminf_{n\to\infty} \theta _n(x,t) \leq \beta \right\} 
=\bigcap _{n_0\in \mathbb N}\bigcup _{n\geq n_0}B_{n}(\beta ).
\]
Hence, we get that
\[
\widehat I=\bigcup _{\substack{(\alpha ,\beta ) \in \mathbb Q^2\\ \beta <\alpha }}  \left( \left(\bigcap _{n_0\in \mathbb N}\bigcup _{n\geq n_0}  A_n(\alpha ) \right) \cap \left(  \bigcap _{n_0\in \mathbb N}\bigcup _{n\geq n_0}   B_n(\beta)\right) \right),
\]
which implies that $\widehat I$ is a Borel set, as claimed.

Due to the well-known facts that   every projection of a Borel set along a Polish space is   an analytic set (i.e.~the image of a continuous map from a Polish space $X$ to $T$), and that any analytic set is Lebesgue measurable, 
  the irregular set $I$ is a Lebesgue measurable set.
\end{proof}
\begin{rem}
From the above proof, the Birkhoff irregular set for each   $\varphi \in \mathcal C^0(M)$ and the Lyapunov irregular set for each  $v\in \mathbb R^d\setminus \{0\}$ are Borel measurable, while  it is unclear whether the Birkhoff and  Lyapunov irregular sets of $f$ are Borel measurable because we might need to consider a  non-denumerable union of Borel measurable sets to find these irregular sets.
\end{rem}

\section*{Acknowledgments}
We are sincerely grateful to Masayuki Asaoka for many fruitful discussions and telling us that the positivity of Lebesgue measure of Lyapunov irregular sets is transmitted by conjugacies. 
We are also grateful
to the anonymous referee for  valuable  comments. 
This work was partially supported by JSPS KAKENHI
Grant Numbers 21K03332, 19K14575, 19K21834  and 18K03376
and by National Natural Science Foundation of China  Grant Numbers 11701199.

\end{document}